
\documentclass[aap]{imsart}

\RequirePackage{amsthm,amsmath,amsfonts,amssymb}
\RequirePackage[numbers]{natbib}

\startlocaldefs
\usepackage[colorlinks=true, linkcolor=blue,citecolor=blue]{hyperref}

\usepackage{amsmath,amssymb,amsfonts,amsthm,mathrsfs,comment,xcolor,color,bm}
\usepackage[shortlabels]{enumitem}
\usepackage{import}

\newcommand{\bbR}{\mathbb{R}}

\newcommand{\bbRD}{\mathbb{R}^d}
\newcommand{\N}{\mathbb{N}}


\newcommand{\cF}{\mathcal{F}}
\newcommand{\cE}{\mathcal{E}}

\newcommand{\cH}{\mathcal{H}}

\newcommand{\cP}{\mathcal{P}}

\newcommand{\cU}{\mathcal{U}}

\newcommand{\cL}{\mathcal{L}}



\newcommand{\bes}{\begin{equation*}}
\newcommand{\ees}{\end{equation*}}
\newcommand{\beas}{\begin{eqnarray*}}
\newcommand{\eeas}{\end{eqnarray*}}
\newcommand{\bea}{\begin{eqnarray}}
\newcommand{\eea}{\end{eqnarray}}
\newcommand{\be}{\begin{equation}}
\newcommand{\ee}{\end{equation}}
\newcommand{\bei}{\begin{itemize}}
\newcommand{\eei}{\end{itemize}}
\newcommand{\bec}{\begin{cases}}
\newcommand{\eec}{\end{cases}}
\newcommand{\ben}{\begin{enumerate}}
\newcommand{\een}{\end{enumerate}}


\newcommand{\bbP}{\mathbb{P}}
\newcommand{\bbE}{\mathbb{E}}



\newcommand{\bbl}{\begin{block}}
\newcommand{\ebl}{\end{block}}

\newcommand{\De}{\mathrm{d}}



\newcommand{\rme}{\mathrm{e}}
\newcommand{\bmX}{\bm{X}}
\newcommand{\bmC}{\bm{C}}

\newcommand{\bmu}{\bm{u}}
\newcommand{\bmf}{\bm{f}}


\newtheorem{mydef}{Definition}[section]
\newtheorem{prop}{Proposition}[section]
\newtheorem{theorem}{Theorem}[section]
\newtheorem{lemma}{Lemma}[section]

\newtheorem{remark}{Remark}[section]
\newtheorem{myxmpl}{Example}[section]
\newtheorem{assumption}[theorem]{Assumption}

\newtheorem{cor}{Corollary}[section]


\newcommand{\ip}[2]{\langle #1,#2\rangle}

\newcommand{\cref}{C_{\mathrm{ref}}}
\newcommand{\op}{\bmX^{0,\xi,T,g}}

\newcommand{\lip}{\mathrm{Lip}}

\endlocaldefs

\begin{document}

\begin{frontmatter}
\title{Coupling by reflection for controlled diffusion processes: turnpike property and large time behavior of Hamilton Jacobi Bellman equations}
\runtitle{Coupling by reflection for controlled diffusion processes}

\begin{aug}
\author[A]{\fnms{Giovanni}~\snm{Conforti}\ead[label=e1]{Giovanni.conforti@polytechnique.edu}},
\address[A]{CMAP, Ecole Polytechnique,IPParis.\printead[presep={,\ }]{e1}}

\end{aug}

\begin{abstract}
  We investigate the long time behavior of weakly dissipative semilinear Hamilton-Jacobi-Bellman (HJB) equations and the turnpike property for the corresponding stochastic control problems. To this aim, we develop a probabilistic approach based on a variant of coupling by reflection adapted to the study of controlled diffusion processes. We prove existence and uniqueness of solutions for the ergodic Hamilton-Jacobi-Bellman equation and different kind of quantitative exponential convergence results at the level of the value function, of the optimal controls and of the optimal processes. Moreover, we provide uniform in time gradient and Hessian estimates for the solutions of the HJB equation that are of independent interest.
\end{abstract}

\begin{keyword}[class=MSC]
\kwd[Primary ]{60H10}
\kwd{60J60}
\kwd{35B40}
\kwd{93E20}
\end{keyword}

\begin{keyword}
\kwd{Stochastic control}
\kwd{Coupling by reflection}
\kwd{Hamilton Jacobi equations}
\kwd{Turnpike property}
\end{keyword}

\end{frontmatter}


%
%

\begin{acks}[Acknowledgments]
Research supported by the ANR project  ANR-20-CE40-0014.
\end{acks}
\section{Introduction and statement of the main results}\label{sec:intro}

Aim of this paper is to analyse the ergodic behavior of a class of stochastic control problems and the associated semilinear PDEs through a probabilistic approach drawing inspiration from recent applications of coupling techniques to quantify the exponential rate of convergence to equilibrium of uncontrolled diffusion processes, see e.g. \cite{eberle2016reflection,eberle2019couplings,eberle2019sticky,guillin2021kinetic}. In particular, we propose novel versions of coupling by reflection and sticky coupling that are well-suited to the study of controlled diffusion processes. 
Understanding the long time averages of dynamic control problems is a classical and yet still rapidly developing research field: for deterministic control problems, there exist at least two different approaches depending on whether the main interest is the ergodic behavior of the value function and the corresponding Hamilton-Jacobi equation or rather the turnpike property for optimal trajectories. The first approach is rooted in the influential works \cite{lions1986homogenization,fathi1998convergence,roquejoffre1998comportement,namah1999remarks,barles2000large}, essentially focusing on the case of periodic Hamiltonians or compact manifolds. On the other hand, the turnpike property refers to the general principle that extremal curves tend to spend most of their time in proximity of equilibrium states, called turnpikes. The first turnpike theorems for problems arising in econometry date back to \cite{dorfman1987linear,mckenzie1963turnpike}. Much more recently, following the detailed analysis of the linear-quadratic setting made in \cite{porretta2013long}, Trélat and Zuazua obtained a general local exponential turnpike estimate in \cite{trelat2015turnpike} for non-linear deterministic control problems. The last few years have witnessed a rapid growth of research activity surrounding this subject in connection with applications ranging from neural network to aircraft design: it would be impossible to account for all relevant contributions here and we rather refer to the  recent surveys \cite{geshkovski2022turnpike,faulwasser2020turnpike}. Ergodicity of stochastic control problems has been mostly investigated through the convergence of the value function to the ergodic solution of the Hamilton-Jacobi-Bellman equation. This is done either by working directly on the action functional to obtain uniform in time estimates on its variations, see \cite{arisawa1998ergodic,barles2001space,fujita2006asymptotic,ichihara2012large,ichihara2013large} or by working on its pathwise representation in terms of backward stochastic differential equations, see 
\cite{fuhrman2009ergodic,debussche2011ergodic,cosso2016long}. Here, the authors work under various dissipativity conditions on the controlled dynamics and, in some cases are able to show that convergence happens exponentially fast \cite{hu2015probabilistic,hu2019ergodic}. When it comes to the ergodicity of optimally controlled processes there are way less quantitative results in the stochastic control literature we are aware of, especially about exponential turnpike estimates. Notable exceptions are the detailed analysis of the linear-quadratic setting carried out in \cite{sun2022turnpike} and the series of papers initiated in \cite{cardaliaguet2012long,cardaliaguet2013long} on mean field games, where the above mentioned dissipativity conditions are replaced by the so called monotonicity conditions on the coupling terms, see also \cite{cardaliaguet2019long} for results on the associated master equation. On another note, the articles \cite{backhoff2020mean,clerc2020long} establish entropic turnpike estimates on the Schr\"odinger problem \cite{LeoSch} and its mean field counterpart, thereby highlighting the relevance of curvature lower bounds for the turnpike phenomenon. Leaving all precise statements to the main body of the article, let us now give a brief overview of our main results.
\bei 
\item Our first contribution is Theorem \ref{thm:SP_val_fun} about the value function. In there, we show well posedness for the ergodic Hamilton Jacobi Bellman equation and exponential convergence in Lipschitz norm. The proof hinges on uniform in time gradient and Hessian bounds for the value function that are of independent interest. 

\item The second main result is Theorem \ref{thm:SP_turnpike_1}, where we establish an exponential turnpike property in Wasserstein distance for optimal controls and optimal processes.
\eei
 Once the full statement of our main results is given in subsections \ref{sub:val_fun} and \ref{sub:turnpike}, we will be in a better position to make a more thorough comparison between our contributions and the above mentioned articles, and we shall do so at the end of this introductory section. The proofs are carried out relying on two different sets of assumptions, whose roles are significantly different. The first is Assumption \ref{ass:SP}, that is roughly what is needed to prove the turnpike property for optimal processes. The second set of Assumptions is \ref{ass:SP_wellposed}, that we need to prove uniform Hessian bounds and simplifies the study of the Hamilton-Jacobi-Bellman equation. However, we stress here that with the sole exception of some multiplicative factors in Theorem \ref{item_4:SP_turnpike}, all constants appearing in the exponential estimates at Theorem and \ref{thm:SP_val_fun} and \ref{thm:SP_turnpike_1} depend only on the requirements of Assumptions \ref{ass:SP} and are completely independent from all constants appearing at Assumption \ref{ass:SP_wellposed}. For this reason, it is natural to postulate that all these exponential convergence results would hold under the sole Assumption \ref{ass:SP}, or just slightly more, by working with viscosity or mild solutions for the HJB equation. However, we do not further elaborate on this point in the present work not to overshadow the main message we want to convey. We conclude this brief introduction pointing out that the methods developed in this article open up new perspectives for a quantitative study of the long time behaviour of stochastic control problems of McKean-Vlasov type. Given the success of techniques based on variants of coupling by reflection in analysing the long time behaviour of the Mc-Kean Vlasov dynamics, see e.g. \cite{durmus2022sticky}, it seems plausible that some progress in this direction could be made: we plan to explore this research line in the near future.
 We now introduce some standard basic notation and eventually proceed to the presentation of the main results.


\paragraph{Notation}
For $d',d,m,n\geq 1 $ we denote by $C^n(\bbRD;\bbR^{d'})$ the space of continuous functions $f:\bbRD\rightarrow\bbR^{d'}$ that are $n$ times differentiable and whose partial derivatives of order $n$ are continuous. For $T>0$, $C^{m,n}([0,T)\times\bbRD;\bbR^{d'})$ is the space of continuous functions $f:[0,T)\times\bbRD\rightarrow\bbR^{d'}$ that are $n$ times differentiable in the space variable and $m$ times differentiable in the time variable with continuous partial derivatives of order $n$ in space and continuous partial derivatives of order $m$ in time. When functions are real valued, we omit to specify this; for example we write $C^n(\bbRD)$ instead of $C^n(\bbRD;\bbR^{d'})$. Moreover, we shall denote by $\mathrm{Lip}(\bbRD)$ the space of Lipschitz real valued functions and by $\|\cdot \|_{\mathrm{Lip}}$ the corresponding Lipschitz norm, i.e. 
\bes
\|g \|_{\mathrm{Lip}}:= \sup_{\substack{x,x'\in\bbRD\\ x\neq x'}} \frac{|g(x)-g(x')|}{|x-x'|}
\ees
 $\mathrm{Lip}(\bbRD;\bbRD)$ shall be used for the set of Lipschitz vector fields and we extend the definition of $\|\cdot \|_{\mathrm{Lip}}$ to this set in the obvious way. $C_p(\bbRD;\bbR^{d'})$ and $C_p([0,T)\times \bbRD;\bbR^{d'})$ are used to real valued denote the set of functions with polynomial growth. Even in this case, we shall use the shorthand notations $C_p([0,T)\times \bbRD)$ and $C_p(\bbRD)$ when $d'=1$. All the notations we have just introduced can be combined in an obvious way: for example $C^1_{\lip}(\bbRD)$ will be used to denote $C^1(\bbRD)\cap\lip(\bbRD)$. We shall denote the set of Borel probability measures on $\bbRD$ with a finite first moment by $\cP_1(\bbRD)$ and distances on this set will be measured by means of the Wasserstein distance of order $1$, defined by

\bes
W_1(\mu,\mu') =\inf_{\pi\in\Pi(\mu,\mu')} \int_{\bbRD\times\bbRD}|x-x'|\pi(\De x\,\De x'),
\ees
where $\Pi(\mu,\mu')$ is the set of couplings of $\mu$ and $\mu'$, that is to say the set of probability measures on $\bbRD\times\bbRD$ whose first marginal is $\mu$ and whose second marginal is $\mu'$. Concerning matrix products and matrix-vector products, we shall employ the symbol $\cdot$. Inner product in $\bbRD$ can be equivalently be denoted $\ip{\cdot}{\cdot}$ or $\cdot$, depending on what is more convenient.

\subsection{A class of stochastic control problems}
 
Let $(\Omega,(\cF_{s})_{s\geq 0},\cF,\bbP)$ be a filtered probability space and $(B_s)_{s\geq0}$ a standard $d$-dimensional and $\cF_s$-adapted Brownian motion. Given $0\leq t<T$, we call an $\bbR^p$-valued process $(u_s)_{s\in[t,T]}$ an admissible control if $(u_s)_{s\in[t,T]}$ is progressively measurable and
\be\label{eq:mom_cond}
    \bbE\left[\int_{t}^T|u_s|^m\De s\right]<+\infty \quad \forall m\in\mathbb{N}.
\ee
Moreover, we denote $\cU_{[t,T]}$ the set of admissible controls. Next, consider a vector field $b:\bbRD\times\bbR^p\longrightarrow \bbRD$ 
satisfying Assumption \ref{ass:SP_wellposed} below. Given $\sigma>0$, a random variable $\xi$ independent from $(B_s)_{s\in[t,T]}$ and an admissible control $u$, we define $(X^{t,x,u}_s)_{s\in[t,T]}$ as the unique strong solution of the stochastic differential equation  
\begin{equation}\label{eq:contr_state}
    \begin{cases}
         \De X^{t,\xi,u}_s= b(X^{t,\xi,u}_s,u_s)\,\De s + \sigma \, \De B_s,\\ 
         X^{t,\xi,u}_t=\xi,
    \end{cases}      
\end{equation}
 This is a good definition: indeed, existence and uniqueness of a strong solution for \eqref{eq:contr_state} is proven for example in \cite[Appendix D]{fleming2006controlled}. Given functions $F:\bbRD\times\bbR^p\rightarrow \bbR$ and $g:\bbRD\rightarrow \bbR$ satisfying Assumption \ref{ass:SP_wellposed}, we study in this article the stochastic control problem
\be\label{eq:SP_prob}
\inf_{u \in \cU_{[t,T]}}  J_{t,\xi}^{T,g}(u)
\ee
where, for $u\in\cU_{[t,T]}$ the cost function is given by 
\be\label{eq:SP_obj}
 J_{t,\xi}^{T,g}(u)=\bbE\left[\int_t^T F(X^{t,\xi,u}_s,u_s)\De s + g(X^{t,\xi,u}_T)\right].
\ee
When $\xi$ has law $\delta_x$ the optimal value in \eqref{eq:SP_prob} corresponds to the value function for problem \eqref{eq:SP_prob} and we denote it $\varphi^{T,g}_t(x)$. In this case, we shall also write $X^{t,x,u}_s$ instead of $X^{t,\xi,u}_s$ in \eqref{eq:contr_state} and $J^{T,g}_{t,x}(u)$ instead of $J^{T,g}_{t,\xi}(u)$. We impose two different family of assumptions on the coefficients. We begin with the first one, that is needed to ensure existence of classical solution to the HJB equation and to establish the Hessian estimate at Theorem \ref{item_2:val_fun}. As highlighted above, none of the constants  $M_{x},M_{xx},M_{xu}$ below is used to express the multiplicative constants and ergodic rates appearing in the exponential convergence results of this article with the exception of some multiplicative constants at Theorem \ref{item_4:SP_turnpike}.

\begin{assumption}\label{ass:SP_wellposed}
We impose the following:
\ben[(i)] 
\item  $b(\cdot,\cdot)$ is of class $C^{2}_p(\bbR^{d+p};\bbRD)$. There exists $M_x\in(0,+\infty)$ such that 
\bes\label{eq:SP_wellposed}
\sup_{(x,u)\in\bbRD\times\bbR^p}|D_xb|(x,u)\leq M_x.
\ees
\item  $F(\cdot,\cdot)$ is of class $C^2_p(\bbR^{d+p})$.  Moreover,
there exist $M_{xx},M_{xu}\in(0,+\infty)$ such that
\bes
\begin{split}
\sup_{(x,u)\in\bbRD\times\bbR^p} |D_{xu} F|(x,u) \vee |D_{xu}b|(x,u) \leq M_{xu}\\
\sup_{(x,u)\in\bbRD\times\bbR^p} |D_{xx} F|(x,u) \vee |D_{xx}b|(x,u) \leq M_{xx}.
\end{split}
\ees
\een
\end{assumption}
The second requirement of item (ii) is a classical hypothesis ensuring that the value function is semiconcave. The first requirement is a somewhat less standard and we shall use it in order to establish global upper and lower bounds for the Hessian of the value function that are independent of the time-horizon by means of a coupling argument.
The second family of assumptions is the one we need to construct at Lemma \ref{lemma:SP_grad_est} a variant of coupling by reflection for controlled diffusion processes and eventually show its effectiveness in the analysis of the long time behavior of the class of stochastic control problems under consideration. At this point, following \cite{lindvall1986coupling}, it is convenient to introduce for any $u\in\bbRD$ the function $ \kappa_{b(\cdot,u)}: (0,+\infty)\longrightarrow\bbR$ as follows: 
\be\label{eq:eberle_kappa}
    \kappa_{b(\cdot,u)}(r) = \inf\left\{-\frac{2\ip{b(x,u)-b(x',u)}{x-x'}}{\sigma^2|x-x'|^2}: |x-x'|=r  \right\}, \quad \bar \kappa_b=\inf_{u\in\bbR^p}\kappa_{b(\cdot,u)}.
\ee
As we are about to see, we express our mild dissipativity assumptions on $b$ through  $\bar\kappa_{b}$.
\begin{assumption}\label{ass:SP}
We impose the following:
\ben[(i)]
\item The function $\bar\kappa_b$  satisfies 
            \be\label{eq:SP_drift_ass_1}
                \liminf_{r\rightarrow+\infty} \bar\kappa_{b}(r)>0, \quad \int_{0}^1r\,\big(\bar\kappa_b^{-}\big)(r)\De r <+\infty,
            \ee
            where $\bar\kappa^{-}_b(r)=\max\{-\bar\kappa_b(r),0\}$.
           \item There exists $M_u\in(0,+\infty)$ such that
            \be\label{eq:SP_drift_ass_2}
              |D_u b|(x,u) \leq M_u,\quad  |\partial_u F|(x,0) \leq M_u,\quad \forall x\in\bbRD,u\in\bbR^p            
            \ee
\item There exist finite positive constants $M^{F}_{x},M^g_x\in(0,+\infty)$ such that 
        \begin{equation}\label{eq:SP_grad_to_control_ass}
            \begin{split}
                |F(x,u)-F(x',u)|&\leq M^{F}_{x}|x-x'|, \quad \forall \,x,x'\in\bbRD,u\in\bbR^p,\\
                |g(x)-g(x')|&\leq M^g_x|x-x'|, \quad \forall x,x'\in\bbRD.
            \end{split}
        \end{equation}
\item For any $R>0$ there exist $\omega_R\in(0,+\infty)$ such that 
 such that 
        \be\label{eq:SP_Fconv_ass}
            \partial_{uu}F(x,u)+ D_{uu}b(x,u)\cdot p \succeq \omega^2_R I \quad \forall x\in\bbRD,u\in\bbR^p,\, |p|\leq R,
        \ee
    where the above inequality is understood as an inequality between quadratic forms. In particular, for any $(x,p)\in\bbRD\times\bbRD$, the function
        \be\label{eq:SP_conv_ass}
   \bbR^p\ni u\mapsto F(x,u)+b(x,u)\cdot p 
\ee
admits a unique minimizer $w(x,p)$.

        %
        
\een
\end{assumption}

When $b$ does not depend on $u$, assumptions of the form \eqref{eq:SP_drift_ass_1} are standard in applications of coupling by reflection. Likewise, \eqref{eq:SP_drift_ass_2} is a rather classical assumption whereas the Lipschitzianity requirements \eqref{eq:SP_grad_to_control_ass}  represent pretty common assumptions in stochastic control, see the textbooks \cite{yong1999stochastic,fleming2006controlled}, and are very often encountered in the study of ergodic stochastic control problems, see for example \cite{fuhrman2009ergodic,debussche2011ergodic,hu2019ergodic}. However, imposing this assumption leaves out some interesting situations that need to be addressed separately. It is nevertheless a sharp assumption, in the sense that if we drop it, there is no reason to expect uniform in time gradient estimates  akin to Theorem \ref{thm:SP_val_fun} to hold.
We are ready to state our first main result on the behavior of the value function that, as it is well known, is a candidate solution for the HJB equation 
\begin{equation}\label{eq:SP_HJB}
    \begin{cases}
    \partial_t \varphi_t(x) -H(x,\nabla \varphi_t(x))+\frac{\sigma^2}{2}\Delta\varphi_t(x)=0 \quad (t,x) \in(0,T)\times \bbRD,\\
    \varphi_T(x)=g(x) \quad x\in\bbRD
    \end{cases}
\end{equation}
In the above, the Hamiltonian $H:\bbRD \times \bbRD \rightarrow\bbR$ is defined as usual by
\bes
H(x,p)= -\inf_{u\in\bbR^p}\{ F(x,u)+b(x,u)\cdot p \}.
\ees
 \subsection{Large-time behavior of the value function}\label{sub:val_fun}
 We now report on our main results about the large time behavior of the value function. The first two contributions of Theorem \ref{thm:SP_val_fun} are a uniform in time gradient estimate under a weak dissipativity condition on the drift field and a uniform in time Hessian estimate. Then, we show existence of a unique stationary viscosity solution for \eqref{eq:SP_HJB} as well as an exponential contraction result. We employ the following definition of stationary solution:  a pair $(\alpha^{\infty},\varphi^{\infty})$ in $\bbR \times  C(\bbRD)$ such that $\varphi^{\infty}(0)=0$ is a stationary viscosity solution for \eqref{eq:SP_HJB} if and only if for any $T>0$ the function
 \bes
[0,T)\times\bbRD\ni(t,x)\mapsto \alpha^{\infty}(T-t)+\varphi^{\infty}(x)
 \ees
 is a viscosity solution for \eqref{eq:SP_HJB} according to the classical definition, see e.g. \cite[Def 5.1]{yong1999stochastic}. The last contribution is an exponential convergence result towards the stationary solution. All constants appearing in the estimates below are explicit and we provide precise references to where in the article their full form is given. We do not write them explicitly at this stage since doing so requires some notation related to coupling by reflection that will be introduced at Section \ref{sec:coup_by_ref}.
 
\begin{theorem}\label{thm:SP_val_fun}
Let Assumption \ref{ass:SP_wellposed}-\ref{ass:SP} hold.  
\ben[(i), ref=\thetheorem(\roman*)]
 \item\label{item_1:val_fun} Let $g\in\mathrm{Lip}(\bbRD)$. Then we have
\bes
\forall 0\leq t\leq T \quad \| \varphi^{T,g}_t\|_{\lip}\leq M^{\varphi,g}_{x,T-t}, \quad \sup_{0\leq t\leq T} \|\varphi^{T,g}_t\|_{\lip} \leq M^{\varphi,g}_x.
\ees
where $M^{\varphi,g}_{x,T-t},M^{\varphi,g}_x\in(0,+\infty)$ are given by \eqref{eq:SP_grad_est_3} and \eqref{eq:SP_grad_est_7} respectively and are independent of $M_{x},M_{xx},M_{xu}$.
\item\label{item_2:val_fun} Let $g\in\mathrm{Lip}(\bbRD)$ and $M^{\varphi,g}_{xx,T-t}$ be given by \eqref{eq:SP_Hess_est_2}. Then $\varphi^{T,g}_t\in C^1(\bbRD)$ for any $t<T$ and we have
\bes
\|\nabla \varphi^{T,g}_t\|_{\lip}\leq M^{\varphi,g}_{xx,T-t}, \quad \sup_{ 0\leq t\leq T-\varepsilon} \|\nabla \varphi^{T,g}_t\|_{\lip}\leq M^{\varphi,g}_{xx,\varepsilon} \quad \forall\varepsilon>0.
\ees
\item\label{item_3:SP_val_fun} There exist a unique pair $(\alpha^{\infty},\varphi^{\infty})$ in $\bbR \times  C^1_{\mathrm{Lip}}(\bbRD)$ such that $(\alpha^{\infty},\varphi^{\infty})$ is a stationary viscosity solution for \eqref{eq:SP_HJB}. Moreover, $\|\varphi^{\infty} \|_{\mathrm{Lip}}\leq M^{\varphi,0}_{x}$ and $\nabla\varphi^{\infty}\in{\mathrm{Lip}}(\bbRD;\bbRD)$.
\item\label{item_4:SP_val_fun}  Let $g,g'\in C^1_{\lip}(\bbRD)$. Then
\bes
\forall 0\leq t \leq T, \quad \|\varphi^{T,g}_t-\varphi^{T,g'}_t \|_{\mathrm{Lip}} \leq C^{-1}\| g-g' \|_{\mathrm{Lip}} \exp(-\lambda (T-t)),
\ees
where $C,\lambda\in(0,+\infty)$ are as in \eqref{eq:SP_stab_fin_cond_1} and in particular independent of $M_x,M_{xu},M_{xx}$. As a consequence, for any $g'\in C^1_{\lip}(\bbRD)$:
\bes
\forall 0\leq t \leq T, \quad\|\varphi^{T,g'}_t-\varphi^{\infty} \|_{\mathrm{Lip}} \leq C^{-1}\| g'- \varphi^{\infty} \|_{\mathrm{Lip}} \exp(-\lambda (T-t)).
\ees
\een
\end{theorem}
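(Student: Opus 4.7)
The plan is to establish the four items in the order (i), (ii), (iv), (iii), throughout leveraging the infimum representation $\varphi^{T,g}_t(x) = \inf_u J^{T,g}_{t,x}(u)$ and the variant of coupling by reflection for controlled diffusions developed in Lemma \ref{lemma:SP_grad_est}. For (i), I would pick $x \neq x'$ together with a near-optimal control $u^*$ for the problem at $x'$, yielding the one-sided estimate
\[\varphi^{T,g}_t(x) - \varphi^{T,g}_t(x') \leq \bbE\Big[\int_t^T \big(F(X^{t,x,u^*}_s,u^*_s) - F(X^{t,x',u^*}_s,u^*_s)\big)\,\De s + g(X^{t,x,u^*}_T) - g(X^{t,x',u^*}_T)\Big].\]
Since both processes are driven by the \emph{same} control, their drift difference is $b(X_s,u^*_s) - b(X'_s,u^*_s)$ and the dissipativity $\bar\kappa_b$ of Assumption \ref{ass:SP}(i) applies; a reflection coupling then gives an exponential contraction of $\bbE[|X_s - X'_s|]$ that is uniform in the choice of control. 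Combined with the Lipschitz bounds $M^F_x, M^g_x$ of Assumption \ref{ass:SP}(iii), this yields $M^{\varphi,g}_{x,T-t}$ immediately, and the time-uniform $M^{\varphi,g}_x$ by integrating the exponential over $s\in[t,T]$; neither depends on the constants of Assumption \ref{ass:SP_wellposed}.

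For the Hessian estimate (ii), I would differentiate the controlled SDE twice in $x$ to obtain variational processes $Y^{(1)}, Y^{(2)}$ and derive a probabilistic representation of $D^2\varphi^{T,g}_t$ via the adjoint/Pontryagin calculus. Semi-concavity (upper Hessian bound) is a standard consequence of the uniform bounds $M_{xx}, M_{xu}$ in Assumption \ref{ass:SP_wellposed}(ii) together with the uniform convexity of $u \mapsto F(x,u) + b(x,u)\cdot p$ guaranteed by Assumption \ref{ass:SP}(iv); the matching lower bound, and its uniformity in $T$, is obtained through a coupling argument that transfers the dissipativity of $\bar\kappa_b$ to the linearized drift $D_x b(\cdot, u^*)$, so that $M^{\varphi,g}_{xx,T-t}$ blows up only as $T-t \downarrow 0$.

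The contraction (iv) is the technical core. Setting $g_\theta := g' + \theta(g-g')$ and $V_\theta(x) := \varphi^{T,g_\theta}_t(x)$, the envelope theorem gives $V_1(x) - V_0(x) = \int_0^1 \bbE\big[(g-g')(X^{*,\theta,x}_T)\big]\,\De\theta$, where $X^{*,\theta,x}$ is optimal for $g_\theta$ starting from $x$, hence
\[\big|(V_1-V_0)(x) - (V_1-V_0)(x')\big| \leq \|g-g'\|_\lip \sup_{\theta\in[0,1]} \bbE\big[|X^{*,\theta,x}_T - X^{*,\theta,x'}_T|\big].\]
The two optimal processes are coupled by reflection, but the feedback drift $x\mapsto b(x, w(x,\nabla\varphi^{T,g_\theta}_s(x)))$ depends on $x$ through both $b$ and the feedback $w$. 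I would split the drift difference into a same-feedback part (dissipative via $\bar\kappa_b$) and a feedback-perturbation part of size at most $M_u \cdot \mathrm{Lip}(w(\cdot,\nabla\varphi^{T,g_\theta}_s))\cdot|x-x'|$, whose Lipschitz constant is uniform in time precisely by the Hessian estimate (ii) and Assumption \ref{ass:SP}(iv). A perturbed-reflection coupling (cf. Lemma \ref{lemma:SP_grad_est}) then yields $\bbE[|X^{*,\theta,x}_T - X^{*,\theta,x'}_T|] \leq C^{-1}|x-x'|e^{-\lambda(T-t)}$ and hence (iv). For (iii), the contraction makes $\varphi^{T,0}_0 - \varphi^{T,0}_0(0)$ a Cauchy family in $\mathrm{Lip}(\bbRD)$ as $T\to\infty$ with limit $\varphi^\infty\in C^1_\lip(\bbRD)$; the ergodic constant is $\alpha^\infty := \lim_T T^{-1}\varphi^{T,0}_0(0)$; viscosity stability of the HJB yields the stationary equation; uniqueness follows from a further application of (iv). The second bound in (iv) is then immediate by specializing $g' = \varphi^\infty$, since the additive $\alpha^\infty(T-t)$ does not contribute to the spatial Lipschitz seminorm.

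The main obstacle I foresee is the circular interaction between (ii) and (iv): a time-uniform Hessian bound is what guarantees the optimal feedback is Lipschitz with constant independent of $T$, which is in turn indispensable for the perturbed reflection coupling in (iv). Arranging the estimates so that the rate $\lambda$ and constant $C$ in the exponential decay ultimately depend only on Assumption \ref{ass:SP} and not on $M_x, M_{xx}, M_{xu}$ --- as the statement explicitly claims --- requires carefully segregating the two sets of constants through every step of the argument, in particular by running the coupling-based estimates of (i) and (iv) without ever invoking the quantitative bounds of Assumption \ref{ass:SP_wellposed}.
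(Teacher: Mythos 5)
Your plan for item (i) is essentially the paper's: run the optimal (or a near-optimal) control simultaneously at both initial points, couple by reflection, and note that since the same control drives both processes the drift difference is $b(\cdot,u^*_s)-b(\cdot',u^*_s)$, to which $\bar\kappa_b$ applies directly. Your reading of item (iii) as a fixed-point/Cauchy argument is also consistent with the paper in spirit (the paper uses Schauder's fixed point theorem for $\bar\Phi_T$ on a ball of Lipschitz functions, then the contraction for uniqueness and the dynamic programming principle to make the fixed point consistent in $T$).

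The gap is in your treatment of item (iv), and it is precisely the one you yourself flag at the end. You split the feedback-drift difference into a ``same-feedback'' part (dissipative through $\bar\kappa_b$) plus a perturbation bounded by $M_u\cdot\mathrm{Lip}\big(w(\cdot,\nabla\varphi^{T,g_\theta}_s(\cdot))\big)\cdot|x-x'|$. This has two fatal consequences. First, the Lipschitz constant of the optimal feedback map $x\mapsto w(x,\nabla\varphi^{T,g_\theta}_s(x))$ does require the Hessian estimate and hence depends on $M_{xx},M_{xu}$ --- exactly what item (iv) explicitly rules out. Second, and more seriously, a Lipschitz-type bound on the perturbation produces a \emph{constant} downward shift in the dissipativity profile $\kappa_{\beta^{T,g_\theta}_s}(r)\geq\bar\kappa_b(r)-\mathrm{const}$, and nothing in Assumption \ref{ass:SP} guarantees that this constant is smaller than $\liminf_{r\to\infty}\bar\kappa_b(r)$, so the resulting $\kappa$ need not be in $K$ and the coupling by reflection machinery simply does not apply. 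The paper escapes both problems by bounding the deviation of the optimal drift from the uncontrolled drift $b(\cdot,0)$ \emph{uniformly} rather than Lipschitz-wise, via \eqref{eq:drift_lipschitz_1} (which needs only the gradient bound of item (i), $M_u$ and $\omega$), so the perturbation of the dissipativity profile is of order $1/r$ and vanishes at infinity --- this is Theorem \ref{thm:contraction_SP}. Equally importantly, the paper's Lemma \ref{lemma:SP_stab_fin_cond} proves the contraction (iv) not by coupling optimal processes under the envelope theorem, but by writing a linear Kolmogorov PDE for the difference $\psi=\varphi^{T,g'}-\varphi^{T,g}$ (Taylor-expanding the Hamiltonian in $p$, so the quadratic remainder $\gamma_s$ is uniformly bounded by \eqref{eq:Hess_Ham_bound} combined with the gradient estimate), representing $\psi$ by Feynman--Kac for the drift $\beta^{T,g}-\gamma$, and applying Proposition \ref{prop:contraction_coup_by_ref} to that linear diffusion; no Hessian bound enters. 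Your envelope-theorem route could be salvaged by replacing the Lipschitz bound on the feedback perturbation with the uniform bound $|w|\leq M_u(1+M_x^{\varphi,g})/\omega$, but then you are reconstructing Theorem \ref{thm:contraction_SP}, not fixing your decomposition as stated.

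A secondary concern: your sketch of item (ii) through variational processes $Y^{(1)},Y^{(2)}$ and a semiconcavity-plus-lower-bound argument does not match the paper's method. The paper never bounds $D^2\varphi$ one-sidedly; it proves a Lipschitz bound for $\nabla\varphi^{T,g}_t$ directly, starting from the stochastic-maximum-principle identity \eqref{eq:SP_Hess_est_1} for $\nabla\varphi^{T,g}_s(\bar\bmX^y_s)$, splitting the time integral at $t+\theta$, using the coalescence probability bound Proposition \ref{item_3:contraction_coup_by_ref} for the tail $[t+\theta,T]$, and a Gr\"onwall estimate on $[t,t+\theta]$. Your sketch is too vague to tell whether it would deliver the specific form $M^{\varphi,g}_{xx,T-t}=\inf_\theta A_{\theta,T-t}e^{M^{H,g}_{xp}\theta}$ that is then used downstream, and in particular whether it would capture the short-time blow-up and long-time uniformity simultaneously.
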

Uniform in time Lipschitz estimates similar to \ref{item_1:val_fun} for non-linear PDEs have been obtained in \cite{porretta2013global} with a method that can indeed be seen as an analytical equivalent of coupling by reflection, as explained in \cite[Appendix A]{porretta2013global}. There are very few Hessian bounds akin to Theorem \ref{item_2:val_fun} in the literature that are valid under hypothesis comparable to ours, see \cite{fujita2005hessian} for some very precise calculations in the framework of controlled Ornstein-Uhlenbeck processes.

\subsection{An exponential turnpike estimate}\label{sub:turnpike}
We now turn the attention to optimal processes and prove an exponential turnpike theorem in Wasserstein distance of order one.  Using a classical verification argument, that we detail at Proposition \ref{prop:SP_opt_cond} and \ref{prop:policy_opt}, an optimal control for \eqref{eq:SP_prob} is given by the process
\bes
\bmu^{0,\xi,T,g}_s= w(\bmX^{0,\xi,T,g}_s,\nabla\varphi^{T,g}_s(\bmX^{0,\xi,T,g}_s)),\quad s\in[0,T],
\ees 
where $(\bmX^{0,\xi,T,g}_s)_{s\in[0,T]}$ is the unique solution  of the stochastic differential equation (SDE)
\be\label{eq:Ham_SDE}
\begin{cases}
\De X_s= - D_pH(X_s, \nabla\varphi^{T,g}_s(X_s))\De s +\sigma \De B_s,\\
X_0=\xi.
\end{cases}
\ee
and $w(\cdot,\cdot)$ has been defined within Assumption \ref{ass:SP}. The main question we address is to quantify the speed of convergence of $\bmX^{0,\xi,T,g}_s$ and $\bmu^{0,x,T,g}_s$ towards their ergodic limits. In this context, the turnpike is defined as the unique stationary distribution $\mu^{\infty}$ associated with the ergodic drift field obtained replacing $\nabla\varphi^{T,g}_s$ with $\nabla\varphi^{\infty}$ in the \eqref{eq:Ham_SDE}. The law of the ergodic optimal control, that we denote $\nu^{\infty}$, is obtained as the image measure of {$w(\cdot,\nabla\varphi^{\infty}(\cdot))$} through $\mu^{\infty}$.
 
\begin{theorem}\label{thm:SP_turnpike_1}
Let Assumption \ref{ass:SP_wellposed}-\ref{ass:SP} hold.  
\ben[(i), ref=\thetheorem(\roman*)]
\item\label{item_1:SP_turnpike_1}Let $g,g'\in C^1_{\lip}(\bbRD)$. Then the estimate
\be\label{eq:SP_turnpike_8}
\begin{split}
W_1(\mathrm{Law}(\op_s),\mathrm{Law}(\bmX^{0,\xi',T,g'}_s)) \leq C\, W_1(\mathrm{Law}(\xi),\mathrm{Law}(\xi'))\exp(-\lambda s)\\
+ C\,\|g-g'\|_{\mathrm{Lip}} \exp(-\lambda (T-s)\big)
\end{split}
\ee
holds uniformly on $\xi,\xi'\in\cP_1(\bbRD)$ and $0\leq s\leq T$. Moreover,  $C=\max\{\overrightarrow{C},\overleftarrow{C}\}$ $\lambda=\min\{\overrightarrow{\lambda},\overleftarrow{\lambda}\}$, where $(\overrightarrow{C},\overrightarrow{\lambda})$ are defined at \eqref{eq:contraction_SP_5} and $(\overleftarrow{C},\overleftarrow{\lambda})$ at \eqref{eq:sticky_coup_4}.
\item\label{item_2:SP_turnpike_1} Let $\varphi^{\infty}$ be as in Theorem \ref{item_3:SP_val_fun}. Then the stochastic differential equation
\be\label{eq:SP_turnpike_2}
\begin{cases}
\De X_s= - D_pH(X_s, \nabla\varphi^{\infty}(X_s))\De s +\sigma \,\De B_s,\\
X_0=x.
\end{cases}
\end{equation}
admits a pathwise unique strong solution and a unique stationary distribution $\mu^{\infty}\in\cP_1(\bbRD)$ for \eqref{eq:SP_turnpike_2}.
\item\label{item_3:SP_turnpike} There exists a constant $\lambda^{\infty}>0$ such that for all $g'\in C^1_{\lip}(\bbRD)$ there exist constants $\tau,A$ such that the estimate
\be\label{eq:SP_turnpike_4}
\begin{split}
W_1(\mathrm{Law}(\bmX^{0,\xi',T,g'}_s),\mu^{\infty}) \leq A\Big(\, W_1(\mathrm{Law}(\xi'),\mu^{\infty})\exp(-\lambda^{\infty} s)\\
+\|g'-\varphi^{\infty}\|_{\mathrm{Lip}} \exp(-\lambda^{\infty}(T-s))\Big)
\end{split}
\ee
holds uniformly on $\mathrm{Law}(\xi')\in\cP_1(\bbRD)$ and $0\leq s\leq T-\tau$. The precise form of $\lambda^{\infty}$ is given at \eqref{eq:SP_turnpike_15} and $\tau$ is defined at \eqref{eq:SP_turnpike_13}. In particular, these constants as well as $A$ are independent of $M_{xu},M_{xx},M_x$. Moreover, $\tau$ and $A$ depend on $g'$ only through $\|g'\|_{\lip}$.
\item\label{item_4:SP_turnpike} Let $\nu^{\infty}$ be the image measure of {$w(\cdot,\nabla\varphi^{\infty}(\cdot))$} under $\mu^{\infty}$ and $\lambda^{\infty}$ be as before. Then, for all $g'\in C^1_{\lip}(\bbRD)$ there exist constants $\tau,A$ such that the estimate
\be\label{eq:SP_turnpike_21}
\begin{split}
W_1(\mathrm{Law}(\bmu^{0,\xi',T,g'}_s),\nu^{\infty}) \leq A\Big(\, W_1(\mathrm{Law}(\xi'),\mu^{\infty})\exp(-\lambda^{\infty} s)\\
+\|g'-\varphi^{\infty}\|_{\mathrm{Lip}} \exp(-\lambda^{\infty}(T-s))\Big)
\end{split}
\ee
holds uniformly on $\mathrm{Law}(\xi')\in\cP_1(\bbRD)$ and $0\leq s\leq T-\tau$.
\een
\end{theorem}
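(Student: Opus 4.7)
The plan is to obtain part (i) as the fundamental Wasserstein stability estimate and then cascade it to parts (ii)--(iv). The contraction in (i) splits into two pieces by triangle inequality: the forward decay $\exp(-\lambda s)$ comes from comparing $\op$ and $\bmX^{0,\xi',T,g}$, i.e.\ two optimally controlled diffusions with the same terminal data $g$ but different initial conditions. These two SDEs share the common drift field $x\mapsto -D_pH(x,\nabla\varphi^{T,g}_s(x))$, and I would construct the coupling by reflection developed earlier in the paper to get contraction at rate $\overrightarrow{\lambda}$ with prefactor $\overrightarrow{C}$ from \eqref{eq:contraction_SP_5}. The backward decay $\exp(-\lambda(T-s))$ instead comes from comparing $\bmX^{0,\xi',T,g}$ with $\bmX^{0,\xi',T,g'}$, which share $\xi'$ but have different drift fields whose $L^\infty$-gap is bounded by $|\nabla\varphi^{T,g}_s-\nabla\varphi^{T,g'}_s|\lesssim \|g-g'\|_{\lip}\exp(-\lambda(T-s))$ thanks to Theorem \ref{item_4:SP_val_fun}. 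A sticky-coupling argument, converting the decaying drift-difference estimate into the corresponding $W_1$-contraction via \eqref{eq:sticky_coup_4} with $(\overleftarrow{C},\overleftarrow{\lambda})$, yields the second piece. Combining the two halves with $C=\max\{\overrightarrow{C},\overleftarrow{C}\}$ and $\lambda=\min\{\overrightarrow{\lambda},\overleftarrow{\lambda}\}$ gives \eqref{eq:SP_turnpike_8}.

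For part (ii), the drift $x\mapsto -D_pH(x,\nabla\varphi^{\infty}(x))$ in \eqref{eq:SP_turnpike_2} is globally Lipschitz: $\nabla\varphi^{\infty}$ is Lipschitz by Theorem \ref{item_3:SP_val_fun}, and $D_pH(x,p)=-b(x,w(x,p))$ is jointly Lipschitz as a consequence of the convexity bound in Assumption \ref{ass:SP}(iv) together with \eqref{eq:SP_drift_ass_2} and \eqref{eq:SP_grad_to_control_ass}. Strong existence and pathwise uniqueness then follow from standard SDE theory. To construct the invariant measure I would apply coupling by reflection to the Markov semigroup $(Q_s)_{s\geq 0}$ generated by \eqref{eq:SP_turnpike_2}: Assumption \ref{ass:SP}(i) together with the Lipschitz bound on $\nabla\varphi^{\infty}$ yields a strict $W_1$-contraction on $\cP_1(\bbRD)$, and Banach's fixed point theorem on the complete metric space $(\cP_1(\bbRD),W_1)$ delivers a unique stationary probability measure $\mu^{\infty}$.

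Part (iii) reduces to applying (i) to a distinguished pair. The key observation is that the choice of terminal cost $g=\varphi^{\infty}$ is compatible with the ergodic equation: by the characterization of stationary viscosity solutions, $\varphi^{T,\varphi^{\infty}}_t(x)=\alpha^{\infty}(T-t)+\varphi^{\infty}(x)$, hence $\nabla\varphi^{T,\varphi^{\infty}}_t\equiv\nabla\varphi^{\infty}$, and $\bmX^{0,\xi,T,\varphi^{\infty}}$ coincides pathwise with the unique strong solution of \eqref{eq:SP_turnpike_2}. Taking $\xi\sim\mu^{\infty}$ thus gives $\mathrm{Law}(\bmX^{0,\xi,T,\varphi^{\infty}}_s)=\mu^{\infty}$ for every $s$. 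Feeding this pair into the estimate of (i) and using that $W_1(\mu Q_s,\mu^\infty)\leq C'W_1(\mu,\mu^\infty)e^{-\lambda' s}$ from (ii) to absorb the initial mismatch produces \eqref{eq:SP_turnpike_4}; the transient time $\tau$ of \eqref{eq:SP_turnpike_13} accommodates the threshold after which the combined forward contraction reaches the improved rate $\lambda^{\infty}$ of \eqref{eq:SP_turnpike_15}.

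For part (iv), I would use the representation $\bmu^{0,\xi',T,g'}_s=w(\bmX^{0,\xi',T,g'}_s,\nabla\varphi^{T,g'}_s(\bmX^{0,\xi',T,g'}_s))$ and the identity $\mathrm{Law}(w(Y,\nabla\varphi^{\infty}(Y)))=\nu^{\infty}$ whenever $Y\sim\mu^{\infty}$. Coupling $\bmX^{0,\xi',T,g'}_s$ with $Y$ optimally for $W_1$ and using the pointwise bound
\begin{equation*}
|w(x_1,\nabla\varphi^{T,g'}_s(x_1))-w(x_2,\nabla\varphi^{\infty}(x_2))|\leq \|w\|_{\lip}\bigl((1+\|\nabla\varphi^{\infty}\|_{\lip})|x_1-x_2|+\|\nabla\varphi^{T,g'}_s-\nabla\varphi^{\infty}\|_{\lip}\bigr)
\end{equation*}
-- where $\|w\|_{\lip}$ comes from the implicit-function argument based on the uniform convexity in Assumption \ref{ass:SP}(iv) and the Lipschitz bound on $\nabla\varphi^\infty$ comes from Theorem \ref{item_3:SP_val_fun} -- I would take expectations, apply part (iii) to the first summand and Theorem \ref{item_4:SP_val_fun} to convert the gradient gap into $C^{-1}\|g'-\varphi^{\infty}\|_{\lip}e^{-\lambda(T-s)}$. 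The prefactor $\|w\|_{\lip}\bigl(1+\|\nabla\varphi^{\infty}\|_{\lip}\bigr)$ is precisely where $M_x,M_{xu},M_{xx}$ enter, explaining why only (iv) carries such a dependence. The \textbf{main obstacle} will be the sticky-coupling step in (i): constructing a coupling of two diffusions with distinct, time-dependent drift fields in such a way that the exponentially small $L^\infty$-gap between the drifts is efficiently converted into an $W_1$-bound decaying at the \emph{same} rate $\overleftarrow{\lambda}$, with constants that remain independent of $M_x,M_{xu},M_{xx}$, requires care in balancing the sticky mechanism with the drift-perturbation estimate.
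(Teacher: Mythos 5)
Your treatment of items (i), (ii) and (iv) follows essentially the same route as the paper: part (i) is exactly the triangle-inequality splitting into the reflection-coupling contraction of Theorem \ref{thm:contraction_SP} (same terminal data, different initial laws) and the sticky-coupling sensitivity bound of Lemma \ref{lem:sticky_coup} (same initial law, different terminal data); part (ii) is the Lipschitz-drift argument plus the fixed-point identity $\beta^{\infty}=\beta^{T,\varphi^{\infty}}_s$ and the Eberle contraction; part (iv) is the decomposition through the Lipschitz bounds on $w$ from Lemma \ref{lem:coeff_bound} combined with item (iii) and the Lipschitz convergence of $\nabla\varphi^{T,g'}_s$ to $\nabla\varphi^{\infty}$.

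There is, however, a genuine gap in your item (iii). If you feed the pair $(g,g')=(\varphi^{\infty},g')$ directly into estimate (i), you do obtain an inequality of the form \eqref{eq:SP_turnpike_4} (and in fact on all of $[0,T]$, with no need for $\tau$, since $\mathrm{Law}(\bmX^{0,\xi,T,\varphi^{\infty}}_s)=\mu^{\infty}$ exactly when $\xi\sim\mu^{\infty}$, so there is no ``initial mismatch to absorb''). But the exponential rate you get this way is $\lambda=\min\{\overrightarrow{\lambda},\overleftarrow{\lambda}\}$, which depends on $g'$ through $M^{\varphi,g'}_x$ in \eqref{eq:contraction_SP_3} and \eqref{eq:SP_stab_fin_cond_1}. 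The whole content of item (iii) is the order of quantifiers: $\lambda^{\infty}$ must be chosen \emph{before} $g'$, with only $\tau$ and $A$ allowed to depend on $\|g'\|_{\lip}$. Your explanation of $\tau$ as ``the threshold after which the combined forward contraction reaches the improved rate'' is not the mechanism. The paper's device is a dynamic-programming shift: it defines $\tau$ by \eqref{eq:SP_turnpike_13} precisely so that the regularized terminal condition $g'_{\tau}=\varphi^{T,g'}_{T-\tau}$ satisfies $\|g'_{\tau}\|_{\lip}\leq 2M^{\varphi,0}_x$ by the gradient estimate \eqref{eq:SP_grad_est_3}, rewrites $\mathrm{Law}(\bmX^{0,\xi',T,g'}_s)=\mathrm{Law}(\bmX^{0,\xi',T-\tau,g'_{\tau}}_s)$ for $s\leq T-\tau$, and only then applies item (i) on the shortened horizon with the pair $(\varphi^{\infty},g'_{\tau})$, whose Lipschitz norms are both controlled by a universal multiple of $M^{\varphi,0}_x$; this yields the $g'$-independent rate $\lambda^{\infty}$ of \eqref{eq:SP_turnpike_15}. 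A final application of \eqref{eq:SP_stab_fin_cond_2} converts $\|\varphi^{\infty}-g'_{\tau}\|_{\lip}$ back into $\|\varphi^{\infty}-g'\|_{\lip}$ at the cost of the factor absorbed into $A$ in \eqref{eq:SP_turnpike_16}. Without this step your argument proves only a weaker statement in which $\lambda^{\infty}$ degrades with $\|g'\|_{\lip}$; since item (iv) inherits its rate from item (iii), the same deficiency propagates there.
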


\begin{remark}
\bei 
\item An important difference between the constant $\lambda$ appearing at \eqref{eq:SP_turnpike_8} and $\lambda^{\infty}$ appearing at \eqref{eq:SP_turnpike_4} is that the latter does not depend on $g'$, whereas the first does.
\item  The constant $A$ appearing at item $(iii)$ admits an explicit expression that we report at \eqref{eq:SP_turnpike_16}. However, such expression is more involved than the one for $\lambda^{\infty}$ and $\tau$. 
\item In contrast with all other estimates, the multiplicative constant $A$ appearing at item \ref{item_4:SP_turnpike} does depend on $M_{xx},M_{xu},M_x$. However, the exponential rate $\lambda^{\infty}$ does not. The constant $\tau$ can be taken to be the same as in item \ref{item_3:SP_turnpike}. 
\eei
\end{remark}
According to the standard terminology, we say that $\mu^{\infty}$ is a stationary distribution for \eqref{eq:SP_turnpike_2} if  
\bes
\int_{\bbRD}\bbE[f(X^{x}_t)]\mu^{\infty}(\De x)= \int_{\bbRD}f(x)\mu^{\infty}(\De x), \quad \forall t>0.
\ees
for all bounded and measurable functions, where $(X^x_s)_{s\geq0}$ is a solution of \eqref{eq:SP_turnpike_2} with initial condition $X^x_0=x.$
\begin{remark}
The exponential rates in the above turnpike estimates can be improved using the Hessian bound of Theorem \ref{item_2:val_fun}. However, we prefer to keep them in the present form to highlight once more that, at least in principle, exponential ergodicity requires only Assumption \ref{ass:SP}.
\end{remark}
We present here two simple examples where Theorem \ref{thm:SP_val_fun}. and \ref{thm:SP_turnpike_1} can be applied. We insist one last time on the fact that not all assumptions are necessary for the ergodicity results.
\begin{myxmpl}\label{ex:FK}
A simple setting where our main results can be applied is obtained considering 
\bes
b(x,u)=b_0(x) +u,\quad F(x,u)= \ell(|u|) + f(x) 
\ees
where
\bei 
\item $g\in C^1_{\lip}(\bbRD)$, $f\in C^2_{\lip}(\bbRD)$ with bounded second derivative 
\item $\ell$ is a uniformly convex smooth function with polynomial growth. 
\item $b_0\in C^2_{\lip}(\bbRD;\bbRD)$ is a vector field with bounded second derivative satisfying 
\bes
-\ip{b_0(x)-b_0(x')}{x-x'}\geq \begin{cases} \kappa|x-x'|^2, \quad  |x-x'|\geq R,\\
                                            -L|x-x'|^2, \quad |x-x'|\leq R.\end{cases}
\ees
for some $R,\kappa,L\in(0,+\infty)$. This condition is fulfilled for example when 
\bes b_0(x)=-\alpha x+\gamma(x)
\ees
with $\alpha>0$ and $\gamma$ a bounded smooth vector field with bounded derivative
\eei
\end{myxmpl}

\begin{myxmpl}
A more general setting where our main results can be applied is obtained considering 
\bes
b(x,u)=b_0(x) +u,\quad F(x,u)=\ell(|u|)+ L(x,u) + f(x) 
\ees
where $b_0,f,g,\ell$ are as in Example \ref{ex:FK} and 
\bei
\item $\sup_{(x,u)\in\bbR^{d+p}}|D_x L(x,u)|<+\infty$
\item $\sup_{(x,u)\in\bbR^{d+p}}|D_{uu}L(x,u)|\leq \delta$, and $\delta$ is small enough.
\item $\sup_{(x,u)\in\bbR^{d+p}}|D_{xx} L(x,u)|+|D_{xu} L(x,u)|<+\infty$
\eei
\end{myxmpl}

\paragraph{Stochastic and deterministic control} It is interesting to compare the findings of Theorem \ref{thm:SP_val_fun} and \ref{thm:SP_turnpike_1} with recent developments around the study of the turnpike property in the field of deterministic control. In what concerns the results, we obtain here global estimates, whereas most of the deterministic literature seems to focus on local estimates, in which a smallness condition is imposed on boundary data so that they are, in a sense, close enough to the turnpike solution. This is the case for example in the influential work \cite{trelat2015turnpike}. There are however some notable exceptions. Indeed, in \cite[Thm.1]{trelat2020linear} the author shows how to pass from local to global estimates if one is able to construct a so called storage function. On another note, the authors of \cite{esteve2022turnpike} manage to establish an exponential turnpike result for a deterministic problem with quadratic cost function and control-affine dynamics. At the level of assumptions, there seems to be a structural difference between the stream of works issued from \cite{trelat2015turnpike} and the present setting. 
In this article, where the cost function is at most linear in the space variable, the emergence of the exponential turnpike property is due to the fact that the drift field satisfies some kind of uniform ergodicity conditions, that in our case is \eqref{eq:SP_drift_ass_1}. The same can be said about several other works devoted to stochastic control, see  \cite{fujita2006asymptotic,debussche2011ergodic,hu2015probabilistic,cosso2016long}: we refer to the end of this section for a more detailed comparison with this line of research. On the contrary, in \cite{trelat2015turnpike} and in most related works, there is generally no ergodicity assumption on the drift and the key geometric feature of the system triggering the exponential turnpike phenomenon is that the cost function includes a strongly confining term in the space variable, typically a quadratic term, see for example \cite{sakamoto2021turnpike}. This kind of situation is natural in stochastic control as well and deserves to be studied in depth, see \cite{ichihara2012large} for some results in this direction. In this setting, instead of looking for Lipschitz bounds on $\varphi^{T,g}_t$, that cannot hold, a possible strategy to obtain global turnpike estimates is to establish one-sided Lipschitz bounds for $-D_pH(\cdot,\nabla\varphi^{T,g}_{\cdot}(\cdot))$: coupling methods seem to be an effective tool to accomplish this task, as we shall report in forthcoming work. One of the main motivation for writing this article was to show that moving from  the deterministic to the stochastic setting makes uniform and global estimates more accessible. One can see this happening already in the relatively simple setting of Example \ref{ex:FK}. Indeed, consider the situation where $b_0=-\nabla U$ and the potential $U$ has several global minima. To the best of our knowledge, neither global turnpike estimates nor uniform in time gradient estimates for the corresponding first order Hamilton Jacobi equation are known to hold for the noiseless version $(\sigma=0)$ of problem \eqref{eq:SP_prob}. Finally, we remark that another difference between the deterministic and stochastic control literature seems to be that the former is mostly studying the behavior of optimal trajectories whereas the latter is essentially focused on the Hamilton Jacobi Bellman equation. The recent article \cite{esteve2020turnpikeb} offers a unified perspective by analysing the effect of the turnpike property on the Hamilton Jacobi Bellman equation in the framework of deterministic linear-quadratic control.

\paragraph{More on controlled diffusion processes} The article \cite{arisawa1998ergodic} undertakes a systematic study of ergodic stochastic control in a periodic setting, including situations where control variables enter the diffusion matrix. The periodic setup marks a fundamental difference with the current framework that is closer to the one in \cite{fujita2006asymptotic} that is concerned with optimal control of Ornstein-Uhlenbeck processes. In there, the Hamiltonian  $H(x,\nabla\varphi(x))$ is supposed to be of the form $h(\nabla \varphi(x))+f(x)$ and local uniform convergence of the value function is proven under two different sets of assumptions. In the first scenario, $f$ is H\"older-continuous and $h$ is Lipschitz. In the second case, $f$ is globally Lipschitz and $h$ locally Lipschitz. It is worth noticing that the proof strategy in this case rests on a uniform in time gradient estimate (Eq 6.2) akin to Theorem \ref{item_1:val_fun} that can be obtained by means of synchronous coupling. The stream of works initiated by \cite{fuhrman2009ergodic}\cite{debussche2011ergodic}, which includes \cite{cosso2016long}, introduces a more probabilistic viewpoint by focusing on the representation of the value function through backward stochastic differential equations (BSDEs) and including infinite dimensional SDEs in the analysis. In particular, \cite{debussche2011ergodic} overcomes the strong dissipativity assumptions made in \cite{fuhrman2009ergodic} allowing for bounded Lipschitz perturbations  of strictly dissipative operators, such as the Ornstein-Uhlenbeck operator, in the controlled dynamics. A step forward is taken in \cite{hu2015probabilistic} where an exponential rate of convergence towards the ergodic BSDE is provided (see \cite[Thm 4.4]{hu2015probabilistic}, possibly in an infinite-dimensional setting. When reduced to the finite dimensional setting the assumptions made there to show exponential convergence would require, among other things, that 
\be\label{eq:comp_lit_1}b(x,u) = -A x+ R(u)\ee 
with $A$ a strictly positive matrix and $R$ a bounded vector field. These assumptions are more restrictive than ours. On the other hand, the assumptions made on the running cost allow for polynomial growth in the space variable, that violates \eqref{eq:SP_grad_to_control_ass} above. The subsequent work \cite{hu2019ergodic} improves on \cite{hu2015probabilistic} in two directions. Firstly, multiplicative noise is allowed, i.e. $\sigma$ can be taken to be a function of $X^{t,x,u}_s$ in \eqref{eq:contr_state}, a situation not considered in this work. About this issue, we mention that following \cite[Rem. 2]{eberle2016reflection} an extension of our main results to the multiplicative setting could in principle be possible. Secondly, assumption \eqref{eq:comp_lit_1} is modified replacing $-Ax$ with a weakly dissipative vector field. Moreover, the Lagrangian function is assumed to be Lipschitz with respect to the space variable. These assumptions bear many resemblances with  \eqref{eq:SP_drift_ass_1} and \eqref{eq:SP_grad_to_control_ass}, though some important differences remain; in particular, the boundedness of the function $R(\cdot)$ in \eqref{eq:comp_lit_1} is  still assumed in \cite{hu2019ergodic}. We conclude this overview recalling that the long time convergence of the value function is also obtained in \cite{ichihara2012large,ichihara2013large} under different assumptions. Essentially, they remove all kind of dissipative structure in the controlled dynamics, i.e. they consider $b(x,u)=u$ in \eqref{eq:contr_state} and replace it with suitable coercivity assumptions both in the space and control variables on the cost function $F(\cdot,\cdot)$. In particular, they are able to treat a class of problems  where $F(x,u)$  has polynomial growth in the space variable. A common point of all works discussed in this subsection is that the asymptotic convergence results with exponential rate concern the value function and require some kind of strong dissipativity assumptions, with the exception of \cite{hu2019ergodic} that only assume a weak dissipativity condition. Convergence of gradients is also shown without rate in some cases. To the best of our understanding, exponential convergence of the gradient of the value function, the turnpike property of optimal controls and optimal process as well as the uniform Hessian bounds are novel contributions, at least under the weak dissipativity assumption \eqref{eq:SP_drift_ass_1}. At the proofs level, as we said before, one key ingredient is to introduce and analyse a controlled version of coupling by reflection and to apply systematically sticky coupling and coupling by reflection for uncontrolled diffusion processes to establish the main estimates. Interestingly, coupling arguments are also at the heart of the proof strategy in \cite{debussche2011ergodic,hu2015probabilistic,hu2019ergodic}. A first difference with these works consist in the different kind of couplings considered. Here, we mostly consider couplings implying contraction estimates in Wasserstein distance, whereas there estimates in total variation distance play a fundamental role. Another different aspect worth mentioning is that we construct couplings of controlled diffusion processes (see Lemma \ref{lemma:SP_grad_est}), whereas the authors of the above listed articles manage to establish their results leveraging the ergodic properties of couplings for classical non-controlled diffusion processes. Finally let us briefly discuss some analogies with recently obtained results on the turnpike property for mean field games, see \cite{cardaliaguet2012long,cardaliaguet2013long}. In this context, the exponential convergence to equilibrium is triggered by a well known monotonicity condition on the interaction between players. This geometric condition can be viewed to some extent as a replacement for the various dissipativity conditions encountered so far. In \cite{cirant2021long}, the authors manage to relax the strict monotonicity assumption and write that they can do so because ``the Brownian noise in the individual dynamics can compensate, to some extent, the lack of monotonicity". This affirmation draws an interesting parallelism with coupling by reflection, whose success in applications is essentially due to the same reason: the presence of Brownian motion allows to compensate short-range convexity deficits in the drift of a diffusion process.

\paragraph{Organization of the article} In Section \ref{sec:coup_by_ref} we give some preliminaries on coupling by reflection and sticky coupling. Section \ref{sec:proofs} is devoted to the proof of our main results. The Appendix section contains the proof of the more technical statements.

\section{Preliminaries on coupling by reflection and sticky coupling}\label{sec:coup_by_ref}

This section is a summary of known results, mostly taken from \cite{eberle2016reflection},\cite{eberle2019sticky} on coupling by reflection and sticky coupling upon which the proofs of the next section are built. 

\subsection{Coupling by reflection} 
Given a filtered probability space $(\Omega,\cF,(\cF_s)_{s\geq0},\bbP)$ supporting an $\cF_s$-adapted Brownian motion $(B_s)_{s\geq0}$, the coupling by reflection of two solutions $(X^{x}_s)_{s\in[0,T]},(X^{x'}_s)_{s\in[0,T]}$ of the SDE

\be\label{eq:SDE_1}
\De X_s = \beta_s(X_s)\De s +\sigma \De B_s 
\ee
with initial conditions $x$ and $x'$ respectively is the stochastic process $(\bar X^x_s,\bar X^{x'}_s)_{s\in[0,T]}$ defined by $\bar{X}^{x}_0=x,\bar{X}^{x'}_0=x'$ and
\be\label{eq:coup_by_ref}
\begin{cases}
\De \bar{X}^{x}_s = \beta_s(\bar X^{x}_s)+\sigma \De B_s \quad &\text{for}\, 0\leq s< \tau,\\
\De \bar X^{x'}_s = \beta_s(\bar X^{x'}_s)+\sigma \De \check{B}_s, &\text{for}\, 0\leq s\leq \tau,\, \bar X^{x'}_s=\bar X^{x}_s \quad \text{for}\, s\geq \tau,
\end{cases}
\ee
where 
\bes
\De \check{B}_s= (\mathrm{I}-2 \,\rme_s \cdot \rme^{\top}_s\mathbf{1}_{\{s<\tau\}})\cdot \De B_s, \quad \tau=\inf\{s:\bar X^{x'}_s=\bar X^{x}_s \},
\ees
$\mathrm{I}$ is the identity matrix and
\bes
\rme_s = \frac{\bar{X}^{x}_s-\bar{X}^{x'}_s}{|\bar{X}^{x}_s-\bar{X}^{x'}_s|}, \quad \text{for}\, 0\leq s<\tau .
\ees
As in Assumption \ref{ass:SP}, it is convenient, for a given vector field $\beta$,  to define the function $\kappa_{\beta}:[0,+\infty)\longrightarrow\bbR$ by
\bes
\kappa_{\beta}(r)=\inf\left\{-2\frac{\ip{\beta(x)-\beta(x')}{x-x'}}{\sigma^2|x-x'|^2}: |x-x'|=r \right\}, \quad r>0.
\ees
This function is used to give hypothesis under which \eqref{eq:coup_by_ref} is well posed and to express the main properties of coupling by reflection. When $\beta=-\nabla U$, $\kappa_{\beta}(r)$ can be interpreted as a kind of integrated modulus of convexity for $U$ on intervals of length $r$ . Indeed, one can easy verify that 
\bes
\frac{\sigma^2}{2}\kappa_{\beta}(r) = \inf_{x\in\bbRD, |v|=1}\left\{ \frac{1}{r}\int_{0}^r\ip{\nabla^2U(x+\theta v)\cdot v}{v}\De \theta \right\}
\ees

We now introduce some notation, following verbatim \cite{eberle2016reflection}. We begin by defining the set $K$ as follows
\be\label{eq:Lyapunov_funct_coup_by_ref_1}
K=\left\{ \kappa\in C((0,+\infty);\bbR) \quad \text{s.t.}\quad \liminf_{r\rightarrow+\infty}\kappa(r)>0, \quad \int_0^1r\kappa^{-}(r)\De r<+\infty\right\}.
\ee
Next, we introduce a twisted version of $W_1(\cdot,\cdot)$ needed to express the contractive properties of coupling by reflection. We begin by defining for $\kappa\in K$ the quantities
\be\label{eq:aux_fun_eberle_2}
\begin{split}
    R_0=\inf\{R\geq 0: \kappa(r)\geq 0 \,\,\forall r\geq R\},\\
    R_1=\inf\{ R\geq R_0: \kappa(r)R(R-R_0)\geq 8\,\, \forall r \geq R\}.
\end{split}
\ee
Moreover, we introduce auxiliary functions $\phi,\Phi,g$ as follows
\be\label{eq:aux_fun_eberle_1}
\begin{split}
\phi(r)=\exp\Big(-\frac14\int_0^r s\kappa^-(s)\De s\Big),\quad \Phi(r)=\int_0^r\phi(s)\De s,\\
g(r)=1- \frac{\int_0^{r \wedge R_1}\Phi(s)/\phi(s) \De s}{2\int_0^{R_1}\Phi(s)/\phi(s) \De s}.
\end{split}
\ee

\begin{mydef}\label{def:Lyapunov_funct_coup_by_ref}
Let $K$ be defined by \eqref{eq:Lyapunov_funct_coup_by_ref_1} and $F$ be defined by
\bes
F= \{ f\in C^2((0,+\infty);(0,+\infty)) \quad \text{s.t.}\quad f'(0)=1,f'(r)>0,f''(r)\leq 0 \quad \forall r>0\}.
\ees
We define maps 
\bes
\bmf:K\longrightarrow F, \quad \bm\lambda:K\longrightarrow(0,+\infty),\quad \bmC:K\longrightarrow(0,1]
\ees
as follows
\begin{subequations}
\bes
\bmf(\kappa)(r)=\int_0^r\phi(s)g(s)\De s, \quad \forall r>0,\\
\ees
\bes
\bm\lambda^{-1}(\kappa)=\sigma^{-2} \int_0^{R_1}\Phi(s)/\phi(s)\De s, 
\ees
\bes
\bmC(\kappa)=\min\left\{ \frac{\phi(R_0)}{2}, \Big(2\int_0^{R_1}1/\phi(s)\De s\Big)^{-1},\bm\lambda(\kappa)\Phi(R_1)\right\}
\ees
\end{subequations}
  where for any $\kappa\in K$, $R_0,R_1$ are defined at \eqref{eq:aux_fun_eberle_2} and the functions $\phi,\Phi,g$ are defined at \eqref{eq:aux_fun_eberle_1}.

 \end{mydef}
 
 \begin{prop}\label{prop:Lyapunov_funct_coup_by_ref}
 Let $\kappa\in K$ and $(f,\lambda,C)=(\bmf(\kappa),\bm\lambda(\kappa),\bmC(\kappa))$. Then the following hold
 \ben[(i), ref=\theprop(\roman*)]
 \item\label{item_1:Lyapunov_funct_coup_by_ref} $f$ is equivalent to the identity:
 \be\label{eq:Lyapunov_funct_coup_by_ref_2}
 C\, r\leq f(r) \leq r, \quad C\leq f'(r) \leq 1, \quad \forall r>0.
\ee
\item\label{item_2:Lyapunov_funct_coup_by_ref} The differential inequality
\be\label{eq:Eberle_funct_2}
f''(r)-\frac{r}{4}\kappa(r)f'(r)\leq -\frac{\lambda}{2\sigma^{2}} f(r),
\ee
holds for all $r>0$.
\item\label{item_3:Lyapunov_funct_coup_by_ref}  The maps $\bm\lambda$ and $\bmC$ are monotone in the following sense: if $\kappa,\kappa'\in K$ are such that 
 \bes
\kappa(r)\geq\kappa'(r) \quad \forall r>0,
 \ees
 then 
 \bes
 \bm\lambda(\kappa)\geq\bm\lambda(\kappa'), \quad \bmC(\kappa)\geq\bmC(\kappa').
 \ees
 and 
 \bes
\bmf(\kappa')''(r)-\frac{r}{4}\kappa(r)\bmf(\kappa')'(r)\leq -\frac{\bm\lambda(\kappa')}{2\sigma^{2}} \bmf(\kappa')(r), \quad \forall r>0.
 \ees

 \een
 \end{prop}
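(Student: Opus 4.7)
The three statements closely follow Eberle's construction in \cite{eberle2016reflection}, and my plan is to reduce everything to direct computations based on the explicit formulas \eqref{eq:aux_fun_eberle_1} for $\phi,\Phi$ and $g$. For (i) I differentiate to obtain $f'(r)=\phi(r)g(r)$. Since $\phi$ and $g$ are decreasing with $\phi(0)=g(0)=1$, immediately $f'\le 1$ and hence $f(r)\le r$. For the lower bound, $\kappa^{-}\equiv 0$ on $[R_{0},+\infty)$ forces $\phi\equiv \phi(R_{0})$ there and $\phi\ge \phi(R_{0})$ on $[0,R_{0}]$, while the explicit form of $g$ forces $g\ge 1/2$ on $(0,\infty)$; combining these, $f'(r)\ge \phi(R_{0})/2\ge \bmC(\kappa)$ and integration yields $f(r)\ge \bmC(\kappa)\,r$.

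For (ii) I would use the identities $\phi'(r)=-\tfrac{r}{4}\kappa^{-}(r)\phi(r)$ and, on $\{r<R_{1}\}$, $\phi(r)g'(r)=-\tfrac{\bm\lambda(\kappa)}{2\sigma^{2}}\Phi(r)$ (the latter being built into the definition of $\bm\lambda$). Substituting and using $\kappa=\kappa^{+}-\kappa^{-}$ yields
\[
f''(r)-\tfrac{r}{4}\kappa(r)f'(r)=-\tfrac{r}{4}\kappa^{+}(r)f'(r)+\phi(r)g'(r),
\]
which on $\{r<R_{1}\}$ is bounded above by $-\tfrac{\bm\lambda(\kappa)}{2\sigma^{2}}\Phi(r)\le -\tfrac{\bm\lambda(\kappa)}{2\sigma^{2}}f(r)$ after noting that $f'=\phi g\le \phi=\Phi'$ together with $f(0)=\Phi(0)=0$ imply $f\le \Phi$. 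On $\{r\ge R_{1}\}$ one has $g'=0$ and the inequality reduces to $\tfrac{r\kappa(r)}{4}f'(r)\ge \tfrac{\bm\lambda(\kappa)}{2\sigma^{2}}f(r)$; I would derive this by combining the defining property $\kappa(r)R_{1}(R_{1}-R_{0})\ge 8$ for $r\ge R_{1}$ with the lower bound $\int_{0}^{R_{1}}\Phi/\phi\ge (R_{1}-R_{0})^{2}/2$, which follows from the closed form $\Phi(s)/\phi(s)=\Phi(R_{0})/\phi(R_{0})+(s-R_{0})$ valid on $[R_{0},R_{1}]$, and the estimate $f(r)/f'(r)\le 2r/\phi(R_{0})$ supplied by (i).

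For (iii), $\kappa\ge \kappa'$ yields $\kappa^{-}\le (\kappa')^{-}$, hence $\phi_{\kappa}\ge \phi_{\kappa'}$, $R_{0}(\kappa)\le R_{0}(\kappa')$ and $R_{1}(\kappa)\le R_{1}(\kappa')$; the representation
\[
\frac{\Phi(s)}{\phi(s)}=\int_{0}^{s}\exp\!\Big(\tfrac{1}{4}\!\int_{t}^{s}\!u\kappa^{-}(u)\,\De u\Big)\De t
\]
shows $\Phi_{\kappa}/\phi_{\kappa}\le \Phi_{\kappa'}/\phi_{\kappa'}$ pointwise. Feeding these comparisons into $\bm\lambda^{-1}(\kappa)=\sigma^{-2}\int_{0}^{R_{1}(\kappa)}\Phi_{\kappa}/\phi_{\kappa}$ directly gives $\bm\lambda(\kappa)\ge \bm\lambda(\kappa')$, and the same building blocks control the three terms in the definition of $\bmC$, the third one $\bm\lambda(\kappa)\Phi_{\kappa}(R_{1}(\kappa))$ being the most delicate and calling for the auxiliary inequality $\int_{0}^{R_{1}}\Phi/\phi\le \Phi(R_{1})\int_{0}^{R_{1}}1/\phi$ to reduce it to the (already monotone) second term. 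The last assertion of (iii) is immediate from (ii) applied to $\kappa'$: since $\bmf(\kappa')'\ge 0$ by (i) and $\kappa\ge \kappa'$, one has $\tfrac{r}{4}\kappa(r)\bmf(\kappa')'(r)\ge \tfrac{r}{4}\kappa'(r)\bmf(\kappa')'(r)$, which only decreases the left-hand side of the differential inequality. The main obstacles I foresee are the subcase $\{r\ge R_{1}\}$ in (ii) and the monotonicity of the third term in the minimum defining $\bmC$, both of which demand a careful balancing of the identities for $\phi,\Phi$ and $\bm\lambda$.
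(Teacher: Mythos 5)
Your overall plan—to re-derive (i)–(iii) directly from the explicit formulas \eqref{eq:aux_fun_eberle_1}–\eqref{eq:aux_fun_eberle_2}—is a different route from the paper, which simply invokes the proof of \cite[Th.~1]{eberle2016reflection} for (i)–(ii) and only sketches (iii). Your treatment of (i), of (ii) on $\{r<R_1\}$, and of the $\bm\lambda$-monotonicity in (iii) is correct: the identities $f'=\phi g$, $\phi'=-\frac{r}{4}\kappa^-\phi$, $\phi g'=-\frac{\bm\lambda}{2\sigma^2}\Phi$, $f\le\Phi$, and the alternative double-integral formula for $\Phi/\phi$ all check out, and the final claim of (iii) follows exactly as you say.

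However, there is a genuine gap in your argument for (ii) on $\{r\ge R_1\}$. You propose to close the inequality $\frac{r}{4}\kappa(r)f'(r)\ge\frac{\bm\lambda}{2\sigma^2}f(r)$ by chaining $\kappa(r)\ge 8/(R_1(R_1-R_0))$, $\int_0^{R_1}\Phi/\phi\ge(R_1-R_0)^2/2$, and $f(r)/f'(r)\le 2r/\phi(R_0)$. Inserting the last two into the right-hand side gives $\frac{\bm\lambda}{2\sigma^2}f(r)\le \frac{2r f'(r)}{(R_1-R_0)^2\phi(R_0)}$, while the first gives for the left-hand side $\frac{r}{4}\kappa(r)f'(r)\ge\frac{2rf'(r)}{R_1(R_1-R_0)}$. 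Matching these requires $(R_1-R_0)\phi(R_0)\ge R_1$, which is false whenever $R_0>0$, since $\phi(R_0)\le 1$. The estimates you chose are individually valid but collectively too lossy: one must instead keep the term $(R_1-R_0)\Phi(R_0)/\phi(R_0)$ in the lower bound $\int_0^{R_1}\Phi/\phi\ge(R_1-R_0)\Phi(R_0)/\phi(R_0)+(R_1-R_0)^2/2$, bound $f(R_1)\le\Phi(R_1)=\Phi(R_0)+(R_1-R_0)\phi(R_0)$, and compare coefficients in the affine functions of $r\ge R_1$; with these sharper inputs the inequality does close.

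A secondary issue is your treatment of the third term $\bm\lambda(\kappa)\Phi_\kappa(R_1(\kappa))$ in the minimum defining $\bmC$. The inequality $\int_0^{R_1}\Phi/\phi\le\Phi(R_1)\int_0^{R_1}1/\phi$ yields $\bm\lambda(\kappa)\Phi_\kappa(R_1)\ge 2\sigma^2\cdot\bigl(2\int_0^{R_1}1/\phi_\kappa\bigr)^{-1}$, but this only reduces the third term to (a multiple of) the second when $\sigma^2$ is large enough, and does not by itself give monotonicity of the third term: both $\Phi_\kappa\ge\Phi_{\kappa'}$ and $R_1(\kappa)\le R_1(\kappa')$ act on the numerator $\Phi_\kappa(R_1(\kappa))$ in opposite directions, and that comparison needs to be settled explicitly or sidestepped (e.g., by showing the third term never realizes the minimum).
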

\begin{proof}
The proof of item $(i)$ and $(ii)$ is carried out within the proof of \cite[Th. 1]{eberle2016reflection}. In fact, this result is stronger than the one we are reporting, as the constant $\bmC(\kappa)$ can be taken there to be $\phi(R_0)/2$. However, we use here the suboptimal form of Definition \ref{def:Lyapunov_funct_coup_by_ref} for later convenience. Item (iii) is proven by means of elementary calculations. Indeed, it is easily seen that $\phi$ is monotonically increasing in $\kappa$ and $R_0,R_1$ are monotonically decreasing. From this, the monotonicity of $\bmC$ follows at once. The monotonicity of $\bm\lambda$ is also readily obtained from the following alternative definition, also given in \cite{eberle2016reflection}:
\bes
\bm\lambda(\kappa)^{-1} = \frac{1}{\sigma^2} \int_0^{R_1}\int_0^s \exp \Big( \frac14\int_t^su\kappa^{-}(u)\De u\Big)\De t \De s.
\ees
\end{proof}
We can now report on the contractive properties of coupling by reflection. To do so, for any $\kappa\in K$ we introduce the following twisted version of the Wasserstein distance
\bes
W_f(\mu,\mu') = \inf_{\pi\in\Pi(\mu,\mu')} \int f(|x-x'|)\pi(\De x \De x'), \quad f = \bmf(\kappa).
\ees
Note that because of Proposition \ref{item_1:Lyapunov_funct_coup_by_ref} $W_f$ is equivalent to $W_1$.
\begin{prop}\label{prop:contraction_coup_by_ref}
Fix $T>0$ and assume that $\beta:[0,T]\times\bbRD\longrightarrow \bbRD$ is locally Lipschitz continuous. Moreover, assume that
\bes
\bar{\kappa}_{\beta}:=\inf_{s\in[0,T]}\kappa_{\beta_s }\in K 
\ees
and set $(f,\lambda,C)=(\bmf(\bar\kappa_\beta),\bm\lambda(\bar\kappa_{\beta}),\bmC(\bar\kappa_{\beta}))$. Then the following hold
\ben[(i), ref=\theprop(\roman*)]
\item\label{item_1:contraction_coup_by_ref} For any $\xi$ with $\mathrm{Law}(\xi)\in\cP_1(\bbRD)$, \eqref{eq:SDE_1} admits a strong solution $(X^{\xi}_s)_{s\in[0,T]}$ and pathwise uniqueness holds. For any $x,x'\in\bbRD$  \eqref{eq:coup_by_ref} admits a strong solution $(\bar{X}^{x}_s,\bar{X}^{x'}_s)_{s\in[0,T]}$ such that  \bes 
\mathrm{Law}\big((\bar{X}^{y}_s)_{s\in[0,T]}\big)=\mathrm{Law}\big((X^{y}_s)_{s\in[0,T]}\big), \quad y=x,x'.
\ees 
In other words, $(\bar{X}^{x}_s,\bar{X}^{x'}_s)_{s\in[0,T]}$ is a coupling of $(X^{x}_s)_{s\in[0,T]}$ and $(X^{x'}_s)_{s\in[0,T]}$.
\item\label{item_2:contraction_coup_by_ref} The contraction estimate
\bes
\bbE[|\bar{X}^{x}_s-\bar{X}^{x'}_s|]\leq C^{-1}|x-x'|\exp(-\lambda s) 
\ees
holds uniformly on $s\in[0,T],x,x'\in\bbRD$. As a consequence, we have that
\be\label{eq:contraction_coup_by_ref_3}
\begin{split}
W_f(\mathrm{Law}(X^{\xi}_s),\mathrm{Law}(X^{\xi'}_s)) \leq \exp(-\lambda s)W_f(\mathrm{Law}(\xi),\mathrm{Law}(\xi'))\\
W_1(\mathrm{Law}(X^{\xi}_s),\mathrm{Law}(X^{\xi'}_s)) \leq C^{-1}\exp(-\lambda s)W_1(\mathrm{Law}(\xi),\mathrm{Law}(\xi'))
\end{split}
\ee
holds uniformly on $\mathrm{Law}(\xi),\mathrm{Law}(\xi')\in\cP_1(\bbRD)$  and  $s\in[0,T].$
\item\label{item_3:contraction_coup_by_ref} The estimate
    \be\label{eq:contraction_coup_by_ref_2}
    \bbP[\bar{X}^x_s\neq \bar{X}^{x'}_s] \leq C^{-1}|x-x'| \frac{\lambda}{e^{\lambda s}-1}
    \ee
    holds for all $x,x'\in\bbRD$ and $s\in[0,T]$.
\een
\end{prop}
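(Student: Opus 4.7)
My plan is to (i) check well-posedness of \eqref{eq:SDE_1} and L\'evy-consistency of the coupling, (ii) extract the Wasserstein contraction from an It\^o expansion of $f(r_s)$ with $r_s=|\bar{X}^{x}_s-\bar{X}^{x'}_s|$, and then (iii) upgrade this to the coupling-time estimate. I expect step (iii) to be the main obstacle, because $f$ vanishes at $0$ so a direct Markov argument on $f(r_s)$ is too weak to bound $\bbP[\tau>s]$.

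For item \ref{item_1:contraction_coup_by_ref}, strong existence and pathwise uniqueness for \eqref{eq:SDE_1} follow from local Lipschitzianity of $\beta$ by a standard localization, with non-explosion ensured by the Lyapunov bound stemming from $\liminf_{r\to+\infty}\bar\kappa_\beta(r)>0$. For the coupling \eqref{eq:coup_by_ref}, the matrix $\mathrm{I}-2\rme_s\rme_s^{\top}\IND_{s<\tau}$ is orthogonal, so $\check{B}$ has identity quadratic variation and, by L\'evy's characterization, is a $d$-dimensional $\cF_s$-Brownian motion. Consequently $\bar{X}^{x'}$ solves \eqref{eq:SDE_1} driven by $\check{B}$ with initial datum $x'$; after $\tau$ the two trajectories agree by construction, so pathwise uniqueness gives $\mathrm{Law}((\bar{X}^{x'}_s))=\mathrm{Law}((X^{x'}_s))$, and analogously for $\bar{X}^{x}$.

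For item \ref{item_2:contraction_coup_by_ref}, the key step is an It\^o expansion of $r_s$ on $\{s<\tau\}$. Subtracting the two SDEs in \eqref{eq:coup_by_ref} gives $\De(\bar{X}^{x}-\bar{X}^{x'})=[\beta_s(\bar{X}^x)-\beta_s(\bar{X}^{x'})]\De s+2\sigma\rme_s\De W_s$, where $W_s:=\int_0^s\rme_u^\top\De B_u$ is a one-dimensional Brownian motion by L\'evy. Since the noise is purely radial the It\^o correction to the norm vanishes and
\bes
\De r_s=r_s^{-1}\ip{\beta_s(\bar{X}^x)-\beta_s(\bar{X}^{x'})}{\bar{X}^x-\bar{X}^{x'}}\De s+2\sigma\De W_s,
\ees
whose drift is bounded above by $-(\sigma^2/2)r_s\kappa_{\beta_s}(r_s)$ by the very definition of $\kappa_{\beta_s}$. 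Applying It\^o to $f(r_s)$ and invoking Proposition \ref{item_3:Lyapunov_funct_coup_by_ref} (with $f'>0$ and $\kappa_{\beta_s}\geq\bar\kappa_\beta$) yields
\bes
\De f(r_s)\leq -\lambda f(r_s)\De s+2\sigma f'(r_s)\De W_s.
\ees
Since $f(0)=0$ this extends trivially through $\tau$, and $f'\leq 1$ makes the stochastic integral a true martingale; hence $(e^{\lambda s}f(r_s))$ is a supermartingale. Taking expectations and using the sandwich $Cr\leq f(r)\leq r$ produces the announced bound on $\bbE[|\bar{X}^x_s-\bar{X}^{x'}_s|]$, from which \eqref{eq:contraction_coup_by_ref_3} follows by optimizing over couplings of initial data.

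The main obstacle lies in item \ref{item_3:contraction_coup_by_ref}: $f(r_s)$ may be arbitrarily small on $\{\tau>s\}$, so the supermartingale bound $\bbE[e^{\lambda s}f(r_s)\IND_{\tau>s}]\leq f(|x-x'|)$ cannot be converted to an estimate on $\bbP[\tau>s]$ by a direct Markov inequality. Rewriting $\lambda/(e^{\lambda s}-1)=1/\int_0^s e^{\lambda u}\De u$, my plan is to exploit the pathwise inequality $\IND_{\tau>s}\int_0^s e^{\lambda u}\De u\leq\int_0^{\tau\wedge s}e^{\lambda u}\De u$ and to bound the latter in expectation by $C^{-1}|x-x'|$, which is equivalent to establishing $\bbE[e^{\lambda(\tau\wedge s)}]\leq 1+\lambda C^{-1}|x-x'|$. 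This exponential moment bound on the coupling time is obtained in \cite{eberle2016reflection} by constructing an explicit superharmonic function for the one-dimensional Feller generator governing $r_s$ (stopped at $\tau$), again exploiting the sharpness of the constant $\bm\lambda(\bar\kappa_\beta)$ from Definition \ref{def:Lyapunov_funct_coup_by_ref}; this yields \eqref{eq:contraction_coup_by_ref_2}.
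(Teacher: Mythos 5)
Your treatment of items (i) and (ii) is sound and coincides with the standard Eberle-type argument to which the paper appeals: the orthogonality of the reflection matrix plus L\'evy's theorem, the radial It\^o expansion of $r_s$, the differential inequality from Proposition \ref{prop:Lyapunov_funct_coup_by_ref} combined with monotonicity, and the observation that $f(0)=0$ and $f'\le 1$ let the supermartingale argument run through $\tau$. The paper's own proof for these two items is simply a citation to \cite[Thm.\,1]{eberle2016reflection} (with a remark that the time-dependent adaptation is routine), so for (i)--(ii) you are supplying the content behind the citation rather than taking a different route.

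Item (iii) is where the proposal has a genuine gap. Your reduction of \eqref{eq:contraction_coup_by_ref_2} to the exponential-moment estimate $\bbE[e^{\lambda(\tau\wedge s)}]\le 1+\lambda C^{-1}|x-x'|$ is arithmetically correct, but it replaces the target with a strictly stronger statement whose proof is not in the cited source and is not within easy reach of the tools you have assembled. Two concrete issues: first, the paper's own citation for item (iii) is \cite[Thm.\,3]{eberle2019sticky} with $M=0$ (the sticky-coupling comparison stated in the paper's Theorem \ref{thm:sticky_coupl}), not \cite{eberle2016reflection}, so the reference you defer to does not contain the estimate you need. Second, the ``explicit superharmonic function'' you invoke cannot be of the natural form $h=1+af$ (or $1+a\Phi$): since $\cL(1+af)\le -\lambda(1+af)+\lambda$ by the very differential inequality you used in (ii), the supermartingale condition $\cL h\le -\lambda h$ fails by exactly the constant $\lambda$, uniformly in $r$; replacing $\lambda$ by any $\lambda'>0$ produces the same obstruction, because the additive constant $1$ contributes to $-\lambda' h$ but not to $\cL h$, while $h(0)=1$ is forced by the absorbing boundary. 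So the route via an exponential moment of $\tau$ requires a genuinely different Lyapunov construction which you have not exhibited. The paper sidesteps this entirely by comparing $r_s$ with the one-dimensional sticky SDE $\De\rho_s = -\tfrac{\sigma^2}{2}\kappa_0(\rho_s)\rho_s\,\De s + 2\sigma\IND_{\{\rho_s>0\}}\,\De W_s$ and reading off $\bbP[\tau>s]\le\bbP[\rho_s>0]$ from the explicit analysis of that scalar process in \cite{eberle2019sticky}; this is the missing ingredient you should supply (or cite correctly) to close item (iii).
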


\begin{proof}
Items $(i)$ and $(ii)$ for time-independent drift fields $(\beta_s(\cdot)=\beta(\cdot))$ are proven at \cite[Thm. 1]{eberle2016reflection}. The adaptation of these arguments to the time-depenedent setting is straightforward and also implicitly done in \cite{eberle2019sticky}, where item $(iii)$ is obtained as a special case of Theorem 3 when the constant $M$ mentioned there is worth $0$. Earlier proofs of this result are to be found in \cite{lindvall1986coupling,chen1989coupling}.
\end{proof}
\subsection{Sticky coupling}

Sticky coupling of multidimensional diffusion processes as introduced in \cite{eberle2019sticky} is a coupling of the solution to \eqref{eq:SDE_1} with the solution of the same stochastic differential equation but relative to a different drift field $\tilde\beta$, i.e.
\be\label{eq:SDE_2}
\De \tilde{X}_s = \tilde\beta_s(\tilde{X}_s)\De s +\sigma\De B_s. 
\ee
Sticky coupling can be understood as the extension of coupling by reflection to the more general setting of different drift fields. The main difference with coupling by reflection lies in the fact the two coupled processes immediately move apart after meeting, instead of becoming one single process, which is clearly only possible if the two drift fields are the same. The construction of sticky coupling goes through an approximation procedure in which one alternates coupling by reflection, when the two processes are far apart, and synchronous coupling, when the processes get very close. We do not give the details here, but rather limit ourselves to state the results we are going to quote afterwards.

\begin{theorem}\label{thm:sticky_coupl}
Let $\beta,\tilde\beta:[0,T]\times\bbRD\longrightarrow\bbRD$ be locally Lipschitz continuous. Assume that 
\bes
\bar\kappa_{\beta}(r) \geq \kappa_0(r) \quad \forall r>0.
\ees
for some $\kappa_0:(0,+\infty)\rightarrow \bbR$ that is globally Lipschitz and equal to a positive constant outside a bounded interval. Moreover, assume that for each $s\in[0,T]$ there exist $M_s\in(0,+\infty)$ such that 
\bes
|\beta_s(x)-\tilde{\beta}_s(x)|\leq M_s, \quad \forall x\in\bbRD.
\ees
Then, for any $x,x'$ there exist pathwise unique strong solutions $(X^x_s)_{s\in[0,T]}$ and $(\tilde{X}^{x'}_s)_{s\in[0,T]}$ to \eqref{eq:SDE_1} and \eqref{eq:SDE_2} with initial condition $x$ and $x'$ respectively. Moreover, there exist a coupling $(\bar{X}^x_s,\bar{X}^{x'}_s)_{s\in[0,T]}$ of $(X^x_s)_{s\in[0,T]}$ and $(\tilde{X}^{x'}_s)_{s\in[0,T]},$ and a real-valued process $(r_s)_{s\in[0,T]}$ such that
\bes
\text{almost surely}, \quad |\bar{X}^x_s-\bar{X}^{x'}_s|\leq r_s \quad \forall s\in[0,T]
\ees
and $(r_s)_{s\in[0,T]}$ is a solution to the SDE
\bes
\De r_s =( M_s+\kappa_0(r_s)r_s)\De s +2\sigma  \mathbf{1}_{\{r_s>0\}}\,\De W_s, \quad r_0=|x-x'|,
\ees
where $(W_s)_{s\in[0,T]}$ is a one-dimensional Brownian motion.
\end{theorem}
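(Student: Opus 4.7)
The plan is to follow the strategy from \cite{eberle2019sticky}: build the sticky coupling as a limit of a family of couplings that interpolate between coupling by reflection (when the two processes are apart) and synchronous coupling (when they are close), and then use a one-dimensional comparison argument to dominate the distance process. Strong existence and pathwise uniqueness for $(X^x_s)_{s\in[0,T]}$ and $(\tilde X^{x'}_s)_{s\in[0,T]}$ follow from the local Lipschitz continuity of $\beta$ and $\tilde\beta$, combined with the one-sided bound $\bar\kappa_\beta\geq \kappa_0$ which prevents finite-time blow-up, so the starting point is to pick these canonical solutions.

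For the coupling construction, I would fix $\delta>0$ and a smooth, nondecreasing cutoff $\pi_\delta:[0,+\infty)\to[0,1]$ with $\pi_\delta(r)=0$ for $r\leq \delta/2$ and $\pi_\delta(r)=1$ for $r\geq \delta$, and define the pair $(\bar X^{x,\delta}_s,\bar X^{x',\delta}_s)_{s\in[0,T]}$ by driving $\bar X^{x,\delta}$ by $B$ with drift $\beta_s$ and $\bar X^{x',\delta}$ with drift $\tilde\beta_s$ and Brownian motion
\bes
\De \check B^{\delta}_s = \bigl(\mathrm{I}-2\,\pi_\delta(|\bar X^{x,\delta}_s-\bar X^{x',\delta}_s|)\,\rme_s\cdot \rme_s^{\top}\bigr)\cdot \De B_s,
\ees
with $\rme_s$ the unit vector in the direction of the difference. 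By L\'evy's characterization $\check B^{\delta}$ is a Brownian motion, so the second component has the correct law. Applying It\^o's formula to $r^{\delta}_s:=|\bar X^{x,\delta}_s-\bar X^{x',\delta}_s|$, decomposing the drift term as
\bes
\ip{\beta_s(\bar X^{x,\delta}_s)-\tilde\beta_s(\bar X^{x',\delta}_s)}{\rme_s}=\ip{\beta_s(\bar X^{x,\delta}_s)-\beta_s(\bar X^{x',\delta}_s)}{\rme_s}+\ip{\beta_s(\bar X^{x',\delta}_s)-\tilde\beta_s(\bar X^{x',\delta}_s)}{\rme_s},
\ees
and bounding the first summand via the definition of $\bar\kappa_\beta$ and the assumption $\bar\kappa_\beta\geq \kappa_0$, while the second by $M_s$, yields an inequality of the form
\bes
\De r^{\delta}_s \leq \bigl(M_s+\kappa_0(r^{\delta}_s)\,r^{\delta}_s\bigr)\De s + 2\sigma\sqrt{\pi_\delta(r^{\delta}_s)}\,\De W^{\delta}_s + (\text{correction vanishing on } \{r^{\delta}_s\geq \delta\}),
\ees
for a scalar Brownian motion $W^{\delta}$ built from $B$ and $\rme$, with sign conventions on $\kappa_0$ understood as in the statement.

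The next step is to pass to the limit $\delta\downarrow 0$: tightness of $(\bar X^{x,\delta},\bar X^{x',\delta},r^{\delta})$ on the space of continuous paths follows from uniform moment bounds (the linear comparison SDE has uniform moments because $\kappa_0$ is positive at infinity). Extracting a subsequential weak limit and using a martingale problem characterization one identifies any limit point $(\bar X^x,\bar X^{x'})$ as a coupling with the required marginals, and the limiting process $|\bar X^x-\bar X^{x'}|$ is dominated pathwise by a solution $r$ of the one-dimensional SDE
\bes
\De r_s=\bigl(M_s+\kappa_0(r_s)\,r_s\bigr)\De s+2\sigma\,\mathbf{1}_{\{r_s>0\}}\De W_s,\qquad r_0=|x-x'|,
\ees
pathwise uniqueness of which follows from Yamada--Watanabe since $\kappa_0$ is globally Lipschitz and the diffusion coefficient is $2\sigma$. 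A standard comparison theorem for one-dimensional SDEs then gives $|\bar X^x_s-\bar X^{x'}_s|\leq r_s$ almost surely for all $s$.

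The main obstacle in making this argument rigorous is the passage to the limit at the diagonal $\{\bar X^x=\bar X^{x'}\}$: the diffusion coefficient of $r^{\delta}$ degenerates exactly where the processes meet, and the limit process genuinely spends a positive amount of time at zero (the ``sticky'' phenomenon), so one cannot use standard It\^o calculus on $|\bar X^x-\bar X^{x'}|$ directly. This is handled in \cite{eberle2019sticky} through a careful analysis of the approximating martingale problems and the local time at zero; the identification of the limiting SDE with the indicator $\mathbf{1}_{\{r_s>0\}}$ in front of the Brownian increments is the heart of the argument and the reason why the approximation via $\pi_\delta$ is introduced in the first place.
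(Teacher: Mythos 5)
The paper's proof of this theorem is a bare citation to Theorem~3 of \cite{eberle2019sticky} together with the remark that the adaptation to time-dependent $M_s$ is straightforward, so there is no internal argument in the paper to compare against; what you have written is a reconstruction of the strategy of that reference, and at the level of strategy it is the right one.

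There is, however, a concrete flaw in your construction of the approximating coupling: the matrix $\mathrm{I}-2\pi_\delta(r)\,\rme\,\rme^{\top}$ is orthogonal only when $\pi_\delta(r)\in\{0,1\}$, since $(\mathrm{I}-c\,\rme\,\rme^{\top})^2=\mathrm{I}$ forces $c\in\{0,2\}$. For intermediate values of the cut-off your $\check B^{\delta}$ is therefore \emph{not} a Brownian motion, L\'evy's characterization does not apply, and $\bar X^{x',\delta}$ would not have the marginal law of $\tilde X^{x'}$. The reference avoids this by driving the approximating pair with \emph{two} independent Brownian motions $B^{1},B^{2}$ and weight functions $\mathrm{rc},\mathrm{sc}$ satisfying $\mathrm{rc}^{2}+\mathrm{sc}^{2}\equiv 1$: the first component is driven by $\mathrm{rc}\,\De B^{1}+\mathrm{sc}\,\De B^{2}$ and the second by $\mathrm{rc}\,(\mathrm{I}-2\rme\rme^{\top})\De B^{1}+\mathrm{sc}\,\De B^{2}$, so that both are genuine Brownian motions. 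With this construction the martingale part of the radial process is $2\sigma\,\mathrm{rc}(r^{\delta}_s)\,\De W_s$; your factor $2\sigma\sqrt{\pi_\delta(r^{\delta}_s)}$ is consistent only if you intended $\pi_\delta=\mathrm{rc}^{2}$, but then $\mathrm{I}-2\pi_\delta\,\rme\rme^{\top}$ is still not orthogonal, confirming that a single driving Brownian motion cannot work. Once this is repaired, the remaining steps you sketch --- tightness, identification of subsequential limits via martingale problems, Yamada--Watanabe uniqueness for the sticky scalar SDE, and the pathwise comparison --- are precisely what is carried out in \cite{eberle2019sticky}, and the only point the paper adds is that nothing in that argument uses constancy of the perturbation bound $M_s$ in $s$.
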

\begin{proof}
This result is proven as a part of \cite[Thm. 3]{eberle2019sticky}, when $M_s$ is independent of $s$. Even in this case the adaptation to the time-dependent setting is straightforward.
\end{proof}
\section{Proofs}\label{sec:proofs}
We proceed to the proof of the main results that we break down in several Lemmas and Propositions. The full proof of Theorem \ref{thm:SP_val_fun} and Theorem \ref{thm:SP_turnpike_1} will then be easily obtained at the end of the section assembling together the intermediate results.

 \paragraph{Relaxed control problem}
 In the proof of Lemma \ref{lemma:SP_grad_est} it will be convenient to work with a relaxed version of problem \eqref{eq:SP_prob} that we now introduce following closely \cite{fleming2006controlled}. Given $0\leq t\leq T$, we call reference probability system a quadruple 
 \bes
 \nu=(\check\Omega,(\check{\cF}_s)_{s\in[t,T]},\check\bbP,(\check{B}_s)_{s\in[t,T]})
 \ees
 such that $(\check\Omega,(\check\cF_s)_{s\in[t,T]},\check\bbP)$ is a filtered probability space and $(\check{B}_s)_{s\in[t,T]}$ a $\check\cF_s$-adapted Brownian motion. We also call $\cU^{\nu}_{[t,T]}$ the set of admissible processes for $\nu$, that is to say, the set of all $\check\cF_s$-progressively measurable processes with moments of all order, see \eqref{eq:mom_cond}. Note that $(\check\cF_s)_{s\in[t,T]}$ needs not be the sigma algebra generated by $(\check{B}_{s})_{s\in[t,T]}$. The value function $\varphi^{T,g,\nu}$ relative to the reference probability system $\nu$ as well as the function $\varphi^{T,g,\mathrm{PM}}$, obtained optimizing over all reference probability systems are defined as follows:
 \bes
\varphi^{T,g,\nu}_t(x) = \inf_{u\in\cU^{\nu}_{[t,T]}} J_{t,x}^{T,g,\nu}(u), \qquad \varphi^{T,g,\mathrm{PM}}_t(x) = \inf_{\nu} \varphi^{T,g,\nu}_t(x). 
 \ees
In the above, $\mathrm{PM}$ stands for "progressively measurable" and optimization is performed over all reference probability systems. For any reference probability system $\nu$ and $u\in\cU_{[t,T]}^{\nu}$, $J_{t,x}^{T,g,\nu}$ is defined in the obvious way:

\be\label{eq:SP_obj_nu}
 J_{t,x}^{T,g,\nu}(u)=\bbE\left[\int_t^T F(X^{t,x,u,\nu}_s,u_s)\De s + g(X^{t,x,u,\nu}_T)\right],
\ee

where 
\begin{equation}\label{eq:contr_state_nu}
    \begin{cases}
         \De X^{t,x,u,\nu}_s= b(X^{t,x,u,\nu}_s,u_s)\,\De s + \sigma \De \check{B}_s,\\ 
         X^{t,x,u,\nu}_t=x. 
    \end{cases}      
\end{equation}
Existence of strong solutions and pathwise uniqueness for \eqref{eq:contr_state_nu} is proven under the current assumptions in \cite[Appendix D]{fleming2006controlled}. We shall see in the next proposition that the proposed relaxation is tight.
\subsection{Optimality conditions}\label{prop:SP_opt_cond}
\begin{prop}
Let Assumption \ref{ass:SP_wellposed}-\ref{ass:SP} hold. Moreover, assume that $g\in C^3_{\lip}(\bbRD)$.
\ben[(i), ref=\theprop(\roman*)]
\item\label{item_1:SP_opt_cond}  The value function $\varphi^{T,g}$ is a classical solution of the  Hamilton-Jacobi-Bellman equation \eqref{eq:SP_HJB} in $C^{1,2}_p([0,T)\times\bbRD)$ and uniqueness of classical solutions holds in the set \bes 
\{\varphi \in C^{1,2}_p([0,T)\times\bbRD):\sup_{s\in[0,T]} \|\varphi_s \|_{\lip}<+\infty \}.
\ees
\item\label{item_2:SP_opt_cond}  The functions $\varphi^{T,g}_t$ and $\varphi^{T,g,\mathrm{PM}}_t$ coincide for all $0\leq t\leq T$.
\item Recall the definition of $w(x,p)$ through
\bes
w(x,p)=\arg\min_{u\in\bbR^p}F(x,u)+b(x,u)\cdot p
\ees
and consider the drift field 
\bes
(s,x)\mapsto b(x,w(x,\nabla\varphi^{T,g}_s(x)))=-D_pH(x,\nabla\varphi^{T,g}_s(x)).
\ees
Then, for any $\xi$ with finite first moment the stochastic differential equation 
\be\label{eq:SP_opt_cond_3}
\begin{cases}
\De X_s= - D_pH(X_s, \nabla\varphi^{T,g}_s(X_s))\De s +\sigma\De B_s,\quad s\in[t,T]\\
X_t=\xi
\end{cases}
\ee
admits a strong solution $(\bmX^{t,\xi,T,g}_s)_{s\in[t,T]}$ and pathwise uniqueness holds. 
\item\label{item_4:SP_opt_cond} The map
\bes 
(s,x)\mapsto  w(x,\nabla\varphi^{T,g}_s(x))
\ees 
is an optimal Markov control policy and $(\bmX^{t,x,T,g}_s)_{s\in[t,T]}$ is an optimal process in the following sense: for any $x\in\bbRD$ $0\leq t\leq T$ 
\bes
\bmu^{t,x,T,g}_{\cdot}:=w(\bmX^{t,x,T,g}_{\cdot},\nabla\varphi^{T,g}_{\cdot}(\bmX^{t,x,T,g}_{\cdot})) \in\arg\min_{u \in \cU_{[t,T]}}  J_{t,x}^{T,g}(u).
\ees
and 
\bes
\bbP-\text{a.s.},\qquad\bmX^{t,x,T,g}_s= X^{t,x,\bmu^{t,x,T,g}}_s \quad \forall s\in[t,T].
\ees
\een
\end{prop}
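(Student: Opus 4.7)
The plan is to package items (i)--(iv) around a single verification argument built on a classical solution of \eqref{eq:SP_HJB}. First, for (i), I would establish existence of a classical solution $\tilde\varphi \in C^{1,2}_p([0,T)\times\bbRD)$ using standard parabolic PDE theory: under Assumption \ref{ass:SP_wellposed}, the coefficients $b,F$ are $C^2_p$ with bounded second derivatives, $g$ is $C^3_{\lip}$, and the uniform convexity condition \eqref{eq:SP_Fconv_ass} ensures via the implicit function theorem that the minimizer $w(x,p)$ is $C^1$ in $(x,p)$ and that $H(x,p) = -[F(x,w(x,p)) + b(x,w(x,p))\cdot p]$ is $C^2$ with appropriate growth. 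One can then apply the classical results of \cite{fleming2006controlled} on semilinear parabolic HJB equations (or equivalently construct $\tilde\varphi$ through a BSDE fixed-point/Picard argument and upgrade regularity by Schauder estimates) to obtain $\tilde\varphi \in C^{1,2}_p$.

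Second, to simultaneously prove (ii), (iv), and the identification $\tilde\varphi = \varphi^{T,g}$, I would run the following verification argument. Fix any reference probability system $\nu$ and any $u \in \cU^{\nu}_{[t,T]}$. Applying It\^o's formula to $s \mapsto \tilde\varphi_s(X^{t,x,u,\nu}_s)$ on $[t,T]$, using the moment condition \eqref{eq:mom_cond} and polynomial growth of the derivatives of $\tilde\varphi$ to guarantee that the stochastic integral is a true martingale, and substituting \eqref{eq:SP_HJB}, yields
\[
\tilde\varphi_t(x) = J^{T,g,\nu}_{t,x}(u) + \bbE\left[\int_t^T\!\! \big(H(X^{t,x,u,\nu}_s,\nabla\tilde\varphi_s)+b(X^{t,x,u,\nu}_s,u_s)\cdot\nabla\tilde\varphi_s + F(X^{t,x,u,\nu}_s,u_s)\big)\,\De s\right].
\]
Since $H(x,p)+b(x,v)\cdot p + F(x,v)\geq 0$ with equality iff $v=w(x,p)$, we deduce $\tilde\varphi_t(x)\leq \varphi^{T,g,\nu}_t(x)\leq \varphi^{T,g}_t(x)$, hence $\tilde\varphi_t \leq \varphi^{T,g,\mathrm{PM}}_t \leq \varphi^{T,g}_t$. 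Uniqueness of classical solutions in the indicated class follows by applying the very same identity to any other solution.

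Third, for (iii) I would show that $(s,x) \mapsto w(x,\nabla\tilde\varphi_s(x))$ is locally Lipschitz in $x$ with linear growth, uniformly on compact time intervals. The first-order optimality $\partial_u F(x,u)+D_u b(x,u)\cdot p = 0$ combined with \eqref{eq:SP_Fconv_ass} and the implicit function theorem yields local Lipschitzianity of $w$ in $(x,p)$; feeding this into the Hessian bound of Theorem~\ref{item_2:val_fun} (valid for $s<T$) and the $C^3_{\lip}$ regularity of $g$ at $s=T$ gives the Lipschitz regularity of the drift, from which strong existence and pathwise uniqueness for \eqref{eq:SP_opt_cond_3} follow by standard theory. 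To close (iv), one plugs $\bmu^{t,x,T,g}_s:=w(\bmX^{t,x,T,g}_s,\nabla\tilde\varphi_s(\bmX^{t,x,T,g}_s))$ into the verification identity above: the integrand vanishes pointwise in $s$, yielding $\tilde\varphi_t(x) = J^{T,g}_{t,x}(\bmu^{t,x,T,g})$ and hence both $\tilde\varphi = \varphi^{T,g} = \varphi^{T,g,\mathrm{PM}}$ and the optimality of $\bmu^{t,x,T,g}$; the almost sure identification of $X^{t,x,\bmu^{t,x,T,g}}$ with $\bmX^{t,x,T,g}$ is then a direct consequence of pathwise uniqueness for both SDEs. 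The main obstacle is step one --- producing the $C^{1,2}_p$ solution with polynomial growth of derivatives adequate for It\^o --- since $b,F$ have unbounded derivatives in the control variable through the Hamiltonian; the delicate point is ensuring enough regularity of $H$ to legitimately invoke classical parabolic theory while controlling iterates through the a priori gradient estimate of Theorem~\ref{item_1:val_fun}.
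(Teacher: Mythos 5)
Your verification argument for items (ii) and (iv) — It\^o on $\tilde\varphi_s(X^{t,x,u,\nu}_s)$, using the nonnegativity of $H(x,p)+b(x,v)\cdot p+F(x,v)$ and its vanishing at $v=w(x,p)$ — is exactly the paper's approach, and item (i) uniqueness follows the same route. The two genuine gaps are in the existence claim of item (i) and in your route to item (iii).

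For item (i) existence, you flag the right obstruction (the Hamiltonian is obtained by minimizing over an \emph{unbounded} control set, so the coefficients of the semilinear parabolic equation are not a priori in the class required by the classical theorems) but you do not resolve it. The paper's resolution is a concrete truncation scheme: one replaces $b$ with a smooth cutoff $b^M$ agreeing with $b$ for $|u|\leq M$, considers the corresponding bounded-control Hamiltonian $H^M(x,p)=\inf_{|u|\leq M}\{F(x,u)+b^M(x,u)\cdot p\}$, and invokes \cite[Ch.~IV, Thm.~4.3]{fleming2006controlled} to obtain classical solutions $\varphi^{M,T,g}\in C^{1,2}_p$. The key step is then a gradient estimate uniform in $M$ (from \cite[Ch.~IV, Lem.~8.1]{fleming2006controlled}), combined with the coercivity coming from \eqref{eq:SP_Fconv_ass}, which forces the minimizer $w^M(x,\nabla\varphi^{M,T,g}_t(x))$ to lie in a ball of fixed radius $K_2$. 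Hence for $M,M'\geq K_2$ all the truncated Hamiltonians agree along the solutions, so $\varphi^{M,T,g}$ stabilizes and the limit solves the original equation \eqref{eq:SP_HJB}. Gesturing at ``standard parabolic theory'' or ``a BSDE fixed point'' does not supply this; it merely restates the difficulty.

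For item (iii), your appeal to ``the Hessian bound of Theorem~\ref{item_2:val_fun}'' is circular: that theorem is proved (via Propositions~\ref{prop:SP_Hess_est} and \ref{cor:reg_eff}) using Proposition~\ref{prop:SP_opt_cond} as an essential input, so it is not available at this stage. It is also unnecessary. Once item (i) is established, $\varphi^{T,g}\in C^{1,2}_p([0,T)\times\bbRD)$ with $\sup_{s}\|\varphi^{T,g}_s\|_{\lip}<+\infty$, so $\nabla\varphi^{T,g}_s$ is $C^1$ in $x$ and bounded uniformly on $[0,T)$; together with $C^1$-regularity of $w$ from the implicit function theorem under \eqref{eq:SP_Fconv_ass} and the bound \eqref{eq:SP_drift_ass_2}, this makes the drift $(s,x)\mapsto -D_pH(x,\nabla\varphi^{T,g}_s(x))$ locally Lipschitz with linear growth in the space variable, which is all that is needed for strong existence and pathwise uniqueness. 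A global-in-space Lipschitz constant (hence a Hessian bound) is not required.
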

The proof is a technical modification of classical argument and is therefore postponed to the Appendix. From now on, for $g\in C^3_\mathrm{Lip}(\bbRD)$ we denote the optimal drift by $\beta_s^{T,g}$, i.e.
\be\label{eq:contraction_SP_2}
\beta^{T,g}_s(x): = -D_pH (x,\nabla\varphi^{T,g}_s(x)))=b(x,w(x,\nabla\varphi^{T,g}_s(x)).
\ee
The optimality of solutions of \eqref{eq:SP_opt_cond_3} will be later established at Proposition \ref{prop:policy_opt} weakening the Assumption that $g\in C^3_{\lip}(\bbRD)$ into $g\in C^1_{\lip}(\bbRD)$.

\subsection{Uniform gradient estimate}
The following lemma is the backbone of our proof strategy. In there, we introduce a new variant of coupling by reflection adapted to the analysis of controlled diffusion processes. For this reason, we name it \emph{controlled reflection coupling}. Note that in this proof we cannot directly use Propositon \ref{prop:contraction_coup_by_ref} as it refers to uncontrolled SDEs. However, we can still profit from the properties of the the maps $\bmf,\bmC,\bm\lambda$ summarized at Propositon \ref{prop:Lyapunov_funct_coup_by_ref}. In the statement, we need the following twisted version of the Lipschitz norm, that we define for $g\in\mathrm{Lip}(\bbRD)$ and $f\in F$ as follows. 
\bes 
\| g \|_{f} = \sup_{\substack{x,x'\in\bbRD \\ x\neq x'}} \frac{|g(x)-g(x')|}{f(|x-x'|)}.
\ees

\begin{lemma}\label{lemma:SP_grad_est}
Let Assumption \ref{ass:SP_wellposed}-\ref{ass:SP} hold and set
\bes
(f,\lambda,C)=(\bmf(\bar\kappa_b),\bm\lambda(\bar\kappa_b),\bmC(\bar\kappa_b)),
\ees 
where $\bar\kappa_b$ is defined at \eqref{eq:eberle_kappa}. Then the estimate
\be\label{eq:SP_grad_est_6}
    \| \varphi^{T,g}_t\|_{f}\leq \Big(\,M^F_x\frac{1-e^{-\lambda(T-t)}}{C \lambda}+\|g\|_{f}\, e^{-\lambda(T-t)}\Big) \qquad  
\ee
holds uniformly on $0\leq t\leq T$ and $g\in\mathrm{Lip}(\bbRD)$. In particular, 
\be\label{eq:SP_grad_est_3}
   \|\varphi^{T,g}_t\|_{\lip}\leq C^{-1}\Big(M^F_x\frac{1-e^{-\lambda(T-t)}}{\lambda}+M_x^g e^{-\lambda(T-t)}\Big):=M^{\varphi,g}_{x,T-t}
\ee
holds uniformly on $0\leq t\leq T$ and 
\be\label{eq:SP_grad_est_7}
\sup_{ 0\leq t\leq T} \| \varphi^{T,g}_t(x)\|_{\lip}\leq  C^{-1}\Big(\frac{M^F_x}{\lambda}+M_x^g \Big): =M_x^{\varphi,g}.
\ee

\end{lemma}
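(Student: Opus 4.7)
\emph{Strategy and reduction.} The plan is to construct a controlled variant of coupling by reflection between the optimal controlled diffusion $\bmX^{t,x,T,g}$ started at $x$ and an auxiliary trajectory $\bar X^{x'}$ started at $x'$, both driven by the \emph{same random feedback control} $\bmu^{t,x,T,g}$. The reason we cannot invoke Proposition~\ref{prop:contraction_coup_by_ref} directly on the closed-loop equation $\De X_s = \beta^{T,g}_s(X_s)\De s + \sigma \De B_s$ is that Assumption~\ref{ass:SP}(i) controls $\bar\kappa_b$, whereas $\bar\kappa_{\beta^{T,g}}$ is not available a priori. Running the coupling at the level of the controlled SDE circumvents this obstacle: once the same random control is imposed on both sides, the frozen drift $b(\cdot, \bmu^{t,x,T,g}_s)$ is one and the same function of space on the two trajectories, and its modulus of contractivity is pointwise bounded below by $\bar\kappa_b$. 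A preliminary mollification of $g$ reduces to the case $g \in C^3_{\lip}(\bbRD)$ in which Proposition~\ref{prop:SP_opt_cond} applies; this provides $\bmX^{t,x,T,g}$ and the feedback $\bmu^{t,x,T,g}_s = w(\bmX^{t,x,T,g}_s, \nabla \varphi^{T,g}_s(\bmX^{t,x,T,g}_s))$ on some filtered space carrying a Brownian motion $B$.

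\emph{Controlled reflection coupling.} On a (possibly enlarged) filtered probability space, construct $\bar X^{x'}$ as the strong solution of
\begin{equation*}
\De \bar X^{x'}_s = b\bigl(\bar X^{x'}_s, \bmu^{t,x,T,g}_s\bigr)\De s + \sigma \De \check B_s, \qquad \bar X^{x'}_t = x',
\end{equation*}
where $\check B$ is the Brownian motion obtained by reflecting $B$ in the instantaneous direction $e_s = (\bmX^{t,x,T,g}_s - \bar X^{x'}_s)/|\bmX^{t,x,T,g}_s - \bar X^{x'}_s|$ up to the coupling time $\tau$, and synchronously thereafter. The construction transcribes the one of \cite{eberle2016reflection} to the progressively measurable random drift $b(\cdot, \bmu^{t,x,T,g}_s)$; Assumption~\ref{ass:SP_wellposed}(i) supplies the requisite local Lipschitz control in space. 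By the weak-formulation equivalence of Proposition~\ref{item_2:SP_opt_cond}, the pair $(\bar X^{x'}, \bmu^{t,x,T,g})$ is admissible for the problem started at $x'$, so
\begin{equation*}
\varphi^{T,g}_t(x') \leq \bbE\Bigl[\int_t^T F\bigl(\bar X^{x'}_s, \bmu^{t,x,T,g}_s\bigr)\De s + g(\bar X^{x'}_T)\Bigr],
\end{equation*}
while by optimality at $x$ the same expression evaluated along $\bmX^{t,x,T,g}$ equals $\varphi^{T,g}_t(x)$.

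\emph{Contraction and value-function comparison.} Setting $r_s = |\bmX^{t,x,T,g}_s - \bar X^{x'}_s|$ and applying It\^o's formula on $[t,\tau)$, the rank-one structure of the noise kills the second-order correction and yields
\begin{equation*}
\De r_s \leq -\tfrac{\sigma^2}{2}\bar\kappa_b(r_s)\, r_s \De s + 2\sigma \De W_s,
\end{equation*}
for a one-dimensional Brownian motion $W$, the drift bound coming from the definition of $\bar\kappa_b$ together with the pointwise inequality $\bar\kappa_b(r) \leq \kappa_{b(\cdot, u)}(r)$ valid for every $u$. Applying It\^o to $f(r_s) = \bmf(\bar\kappa_b)(r_s)$, combining $f' > 0$ and $f'' \leq 0$ with the differential inequality of Proposition~\ref{item_2:Lyapunov_funct_coup_by_ref}, one gets $\De f(r_s) \leq -\lambda f(r_s)\De s + 2\sigma f'(r_s)\De W_s$, hence $\bbE[f(r_s)] \leq f(|x-x'|) e^{-\lambda(s-t)}$ (the stochastic integral is a genuine martingale since $f' \leq 1$). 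Subtracting the two cost expressions, using the $u$-uniform Lipschitzianity of $F$ from \eqref{eq:SP_grad_to_control_ass} and the bound $Cr \leq f(r)$ together with the definition of $\|g\|_f$, one is led to
\begin{equation*}
\varphi^{T,g}_t(x') - \varphi^{T,g}_t(x) \leq \frac{M^F_x}{C}\int_t^T \bbE[f(r_s)]\De s + \|g\|_f\, \bbE[f(r_T)].
\end{equation*}
Substituting the exponential decay of $\bbE[f(r_s)]$ and symmetrizing in $(x,x')$ yields \eqref{eq:SP_grad_est_6}. The estimates \eqref{eq:SP_grad_est_3} and \eqref{eq:SP_grad_est_7} then follow from the two-sided equivalence $\|g\|_{\lip} \leq \|g\|_f \leq C^{-1}\|g\|_{\lip}$ and $f(r) \geq Cr$.

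\emph{Main obstacle.} The genuinely new technical step is the well-posedness of the coupled system in the controlled setting: the drift of $\bar X^{x'}$ is dictated by a random control that is itself a feedback of the companion process $\bmX^{t,x,T,g}$, so one has to carefully adapt the reflection construction of \cite{eberle2016reflection} to progressively measurable random drift fields and to justify the Itô expansion of the distance process in that extended framework. The density reduction from $g \in \mathrm{Lip}$ to $g \in C^3_{\lip}$ is routine but must be carried out with care to preserve the declared independence of the final constants from $M_x, M_{xx}, M_{xu}$.
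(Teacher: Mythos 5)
Your proposal is correct and follows essentially the same route as the paper's proof: you construct a controlled reflection coupling in which both trajectories are driven by the same random feedback control $\bmu^{t,x,T,g}_s$, use the relaxed/weak formulation of Proposition \ref{item_2:SP_opt_cond} to show the coupled trajectory $(\bar X^{x'},\bmu^{t,x,T,g})$ is admissible for the problem started at $x'$, run the It\^o analysis of $r_s$ and $f(r_s)$ exactly as the paper does (using that $\bar\kappa_b(r)\leq \kappa_{b(\cdot,u)}(r)$ uniformly in $u$ so that the frozen-control drift obeys the same contractivity bound as in the uncontrolled case), and finally compare costs via the $u$-uniform Lipschitz bound on $F$ and the twisted norm $\|g\|_f$, followed by the same mollification to lift the $C^3_{\lip}$ hypothesis. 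The ``main obstacle'' you identify — well-posedness of the coupled system under a progressively measurable random drift — is precisely the step the paper handles by realizing $(\bmX_s,X'_s)$ as the unique strong solution of a single SDE in $\bbR^{2d}$, invoking Assumption \ref{ass:SP_wellposed} together with the regularity of $\varphi^{T,g}$ from Proposition \ref{item_1:SP_opt_cond}.
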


\begin{proof}
We first carry out the proof assuming $g\in C^3_{\lip}(\bbRD)$ and eventually relax this hypothesis. Let $x,x'\in\bbRD,0\leq t\leq T$ be fixed and $(\bmX^{t,x,T,g}_s)_{s\in[t,T]}$ be the optimal process given by \eqref{eq:SP_opt_cond_3} for the initial condition $\xi\equiv x$. In the rest of the proof, since there is no ambiguity, we abbreviate $\bmX^{t,x,T,g}_s$ with $\bmX_s$. Following a similar convention, we denote by $\bmu$ the optimal control $\bmu^{t,x,T,g}=w(\bmX_s,\nabla\varphi^{T,g}_s(\bmX_s))$.
Next, we consider the diffusion process $(\bmX_s,X'_s)_{s\in[t,T]}$ on $\bbR^{2d}$ defined by $(\bmX_t,X'_t)=(x,x')$ and 
\be\label{eq:SP_grad_est_5}
\begin{cases}
    \De \bmX_s= b(\bmX_s,\bmu_s)\De s + \sigma \De B_s, \quad &\text{for}\, t\leq s<\tau, \\
    \De X'_s= b(X'_s,\bmu_s)\De s + \sigma \De \check{B}_s, \quad \quad &\text{for}\, t\leq s\leq \tau,\,\, X'_s=\bmX_s \,\, \text{for}\,\, \tau\leq s\leq T,
\end{cases}
\ee
where 
\be\label{eq:SP_grad_est_4}
\De\check{B}_s := (\mathrm{I}-2\rme_s\cdot\rme^{\top}_s\mathbf{1}_{\{s<\tau\}})\cdot\De B_s, \quad \rme_s:= \frac{\bmX_s-X'_s}{|\bmX_s-X'_s|},
\ee
and

\bes
    \tau =\inf\{ s\geq t: \bmX_s=X'_s\}\wedge T.
\ees
Under the current assumptions on $b(\cdot,\cdot)$, and because of Proposition \ref{item_1:SP_opt_cond}, the process $(\bmX_s,X'_s)_{s\in[t,T]}$ can be realized as the unique strong solution of the SDE \eqref{eq:SP_grad_est_5}. Moreover, by L\'evy's characterization, $(\check{B}_s)_{s\in[t,T]}$ is an $(\cF_s)_{s\in[t,T]}$-Brownian motion. But then, $\nu=(\Omega,(\cF_s)_{s\in[t,T]},\bbP,(\check{B}_s)_{s\in[t,T]})$ is a reference probability system and, because of Proposition \ref{item_4:SP_opt_cond}  $\bmu$ is an admissible control, i.e. $\bmu\in\cU^{\nu}_{[t,T]}$\footnote{ Note that by construction $\cU^{\nu}_{[t,T]}=\cU_{[t,T]}$}. Recalling \eqref{eq:contr_state_nu} we find
\be\label{eq:SP_grad_est_9}
\bbP-\text{a.s.\,,} \quad X'_{s}=X^{t,x',\bmu,\nu}_{s} \quad \forall s\in [t,T].
\ee
whence, thanks to Proposition \ref{item_2:SP_opt_cond},
\be\label{eq:SP_grad_est_2}
\varphi^{T,g}_t(x')=\varphi^{T,g,\mathrm{PM}}_t(x')\leq \varphi^{T,g,\nu}_t(x')\stackrel{\eqref{eq:SP_grad_est_9}}{\leq}  \bbE\left[\int_{t}^TF(X'_s,\bmu_s)\De s + g(X'_T)\right].
\ee
By hypothesis $\bar\kappa_b\in K$, where $K$ is defined at \eqref{eq:Lyapunov_funct_coup_by_ref_1}. Thus, we can consider
\[f:=\bmf(\bar\kappa_b),\quad\lambda:=\bm\lambda(\bar\kappa_b)\quad \text{and} \quad C:=\bmC(\bar\kappa_b)\]
as per Definition \ref{def:Lyapunov_funct_coup_by_ref}. Next,  define the process $r_s=|\bmX_s-X'_s|$. By construction we have that for $s<\tau$
\bes
\De (\bmX_s-X'_s) = (b(\bmX_s,\bmu_s)- b(X'_s,\bmu_s))\De s +2 \sigma\,\rme_s \De W_s,
\ees
where $(W_s)_{s\in[t,T]}$ is the one-dimensional Brownian motion 
\bes
     \De W_s =  \rme_s^{\top} \cdot \De B_s.
\ees
An application of It\^o's formula gives that for $s<\tau$
\bes
\begin{split}
\De r_s&= \frac{1}{r_s}\ip{ b(\bmX_s,\bmu_s)- b(X'_s,\bmu_s)}{\bmX_s-X'_s}\,\De s + 2\sigma\De W_s\\
&+\underbrace{ \frac{2\sigma^2}{r_s} \rme_s^{\top}\big( \mathrm{I}-\rme_s\rme^{\top}_s\big)\rme_s\De s}_{=0}\\
&\stackrel{\eqref{eq:SP_drift_ass_1}}{\leq} -\frac{\sigma^2}{2}\bar\kappa_b(r_s) r_s\De s +2 \sigma \De W_s.
\end{split}
\ees
With another application of It\^o formula we obtain
\bes
    \begin{split}
        \De f(r_s) &\stackrel{f\in F}{\leq} 2\sigma^2\Big(-f'(r_s)\frac{\bar\kappa_b(r_s)}{4}r_s  +f''(r_s)\Big) \De s +2\sigma f'(r_s) \De W_s\\
        &\stackrel{\eqref{eq:Eberle_funct_2}}{\leq} -\lambda f(r_s)\De s + 2\sigma f'(r_s) \De W_s.
    \end{split}
\ees
Rewriting the above in integral form, observing that $r_s\equiv 0$ for $s\geq\tau$, taking expectation on both sides and eventually using Gronwall's Lemma we find
\be\label{eq:SP_grad_est_8}
\bbE[f(|\bmX_s-X'_s|)] = \bbE[f(r_s)] \leq f(|x-x'|)e^{-\lambda(s-t)} \quad \forall t\leq s\leq T.
\ee
We can now conclude using \eqref{eq:SP_grad_est_2} and the optimality of $\bmX_s$. Indeed we have
\bes
\begin{split}
\varphi^{T,g}_t(x')-\varphi^{T,g}_t(x) & \stackrel{\eqref{eq:SP_grad_est_9}}{\leq}  \bbE[\int_{t}^TF(X'_s,\bmu_s)-F(\bmX_s,\bmu_s)\De s +g(X'_T)-g(\bmX_T)]\\
&\stackrel{\eqref{eq:Lyapunov_funct_coup_by_ref_2}}{\leq} \bbE\left[C^{-1} M^F_x\int_t^T f(|X'_s-\bmX_s|)\De s + \|g\|_f f(|X'_T-\bmX_T|)\right]\\
&\stackrel{\eqref{eq:SP_grad_est_8}}{\leq} f(|x-x'|)\Big( M^F_x \,\frac{1-e^{-\lambda(T-t)}}{\lambda C }
+\|g\|_f\,e^{-\lambda (T-t)}\Big) 
\end{split}
\ees
which gives \eqref{eq:SP_grad_est_6} since $x,x'$ can be chosen arbitrarily. The other two bounds \eqref{eq:SP_grad_est_3}\eqref{eq:SP_grad_est_7} are then simply deduced using \eqref{eq:Lyapunov_funct_coup_by_ref_2}. The last thing that remains to be done is to remove the assumption that $g\in C^3(\bbRD)$. To do so, observe that for any $g\in \lip(\bbRD)$, one can construct a sequence $(g^M)_{M\geq 1}\subseteq C^{3}_{\lip}(\bbRD)$ such that 
\be\label{eq:SP_grad_est_10}
\lim_{M\rightarrow+\infty}\sup_{x\in\bbRD}|g^M-g |=0, \quad \lim_{M\rightarrow+\infty} \|g^M\|_{f}=\|g\|_{f},
\ee
 by means of a standard mollification procedure. From this, it follows that for any $0\leq t \leq T$ we have
\bes
\lim_{M\rightarrow+\infty}\sup_{x\in\bbRD}|\varphi^{T,g^M}_t(x)-\varphi^{T,g}_t(x) |=0,
\ees
which in particular imply
\be\label{eq:SP_grad_est_11}
\|\varphi^{T,g}_t\|_{f}\leq\liminf_{M\rightarrow+\infty}\|\varphi^{T,g^M}_t \|_{f}.
\ee
At this point, the proof of \eqref{eq:SP_grad_est_6} is obtained passing to the limit in 
\bes
    \| \varphi^{T,g^M}_t\|_{f}\leq \Big(\,M^F_x\frac{1-e^{-\lambda(T-t)}}{C \lambda}+\|g^M\|_{f}\, e^{-\lambda(T-t)}\Big) \qquad 
\ees
and using \eqref{eq:SP_grad_est_10},\eqref{eq:SP_grad_est_11}. The proof \eqref{eq:SP_grad_est_3}\eqref{eq:SP_grad_est_7} follows from \eqref{eq:Lyapunov_funct_coup_by_ref_2}.
\end{proof}
In the upcoming auxiliary lemma we draw some algebraic consequences from the gradient estimate we have just obtained. 
\begin{lemma}\label{lem:coeff_bound}
Let Assumption \ref{ass:SP_wellposed}-\ref{ass:SP} hold. Moreover, assume that $\varphi^{T,g}_t\in C^1_{\lip}(\bbRD)$ and that \eqref{eq:SP_grad_est_7} holds. Then we have

\ben[(i)]
\item The estimates 
\begin{subequations}
\be\label{eq:control_bound_1}
|w(x,\nabla\varphi^{T,g}_s(x))| \leq \frac{M_u(1+M_x^{\varphi,g})}{\omega_{M_x^{\varphi,g}}},
\ee
\be\label{eq:drift_lipschitz_1}
|b(x,w(x,\nabla\varphi^{T,g}_s(x)))-b(x,0)|\leq \frac{M_u^2(1+M_x^{\varphi,g})}{\omega_{M_x^{\varphi,g}}},
\ee
\be\label{eq:Hess_Ham_bound}
|D_{pp}H(x,p)| \leq\frac{ M_u^2}{\omega_{|p|}} ,
\ee
\end{subequations}
hold uniformly on $x,p\in\bbRD,0\leq s\leq T$.
\item The estimates
\begin{subequations}
\be\label{eq:coeff_bound_4}
D_xH(x,\nabla\varphi^{T,g}_s(x)) \leq M_xM_x^{\varphi,g}+M^F_x:=M_x^{H,g}
\ee
\be\label{eq:coeff_bound_2} |D_{xp}H|(x,\nabla\varphi^{T,g}_s(x))\leq \frac{ M_{xu}M_u(1+M_x^{\varphi,g})}{\omega_{M_x^{\varphi,g}}}+M_x:=M^{H,g}_{xp}
\ee
\be\label{eq:coeff_bound_3}
|D_{xx} H|(x,\nabla\varphi^{T,g}_s(x))\leq M_{xx}(1+M_x^{\varphi,g})+\frac{M^2_{xu}(1+M_x^{\varphi,g})^2}{\omega_{M_x^{\varphi,g}}}:=M^{H,g}_{xx}
\ee
\end{subequations}
hold uniformly on $x\in\bbRD$ and $0\leq s\leq T$.
\item The estimates
\begin{subequations}
\be\label{eq:coeff_bound_11}
 |D_xw|(x,p) \leq\frac{M_{xu}(1+|p|)}{\omega_{|p|}}
\ee
\be\label{eq:coeff_bound_13}
|D_pw|(x,p) \leq \frac{M_u}{\omega_{|p|}}
\ee
\end{subequations}
hold uniformly on $x,p\in\bbRD$.
\een
\end{lemma}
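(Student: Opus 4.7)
The plan is to combine the first-order optimality condition for the minimizer $w(x,p)$ with the uniform strong convexity postulated in \eqref{eq:SP_Fconv_ass} and the coefficient bounds given in Assumptions \ref{ass:SP_wellposed}--\ref{ass:SP}. I expect no conceptual obstacle; the main work is an organized bookkeeping of implicit-function-theorem computations together with chain-rule expansions, and the only subtle point is keeping the explicit dependence on $\omega$ consistent with the convexity lower bound $\omega_R^2 I$ stated in \eqref{eq:SP_Fconv_ass}.

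First I would prove item (i). Set $G_{x,p}(u) := F(x,u)+b(x,u)\cdot p$, so that $w(x,p)$ is its unique minimizer and the first-order condition reads
\begin{equation*}
\partial_u F(x,w(x,p))+D_u b(x,w(x,p))^\top p=0.
\end{equation*}
Using the strong-convexity lower bound from \eqref{eq:SP_Fconv_ass}, one has $|\partial_u G_{x,p}(w)-\partial_u G_{x,p}(0)|\gtrsim \omega_{|p|}^{2}|w|$, while \eqref{eq:SP_drift_ass_2} yields $|\partial_u G_{x,p}(0)|\le |\partial_u F(x,0)|+|D_u b(x,0)||p|\le M_u(1+|p|)$. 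Specializing $p=\nabla\varphi^{T,g}_s(x)$ and applying \eqref{eq:SP_grad_est_7} then gives \eqref{eq:control_bound_1}. The bound \eqref{eq:drift_lipschitz_1} follows at once from a mean-value argument together with $|D_u b|\le M_u$. For \eqref{eq:Hess_Ham_bound}, I would use the envelope theorem to write $D_p H(x,p)=-b(x,w(x,p))$ and differentiate once more in $p$, applying the chain rule and the bound on $|D_p w|$ from part (iii).

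Next I would treat part (iii). Applying the implicit function theorem to the first-order condition above and solving for the partial derivatives of $w$ yields
\begin{equation*}
D_x w(x,p) = -\bigl(\partial_{uu}F+D_{uu}b\cdot p\bigr)^{-1}\bigl(D_{xu}F+D_{xu}b\cdot p\bigr),\qquad D_p w(x,p) = -\bigl(\partial_{uu}F+D_{uu}b\cdot p\bigr)^{-1}(D_u b)^\top,
\end{equation*}
both evaluated at $(x,w(x,p))$. Inserting the operator-norm bound on $(\partial_{uu}F+D_{uu}b\cdot p)^{-1}$ coming from \eqref{eq:SP_Fconv_ass}, and using $|D_{xu}F|,|D_{xu}b|\le M_{xu}$ from Assumption \ref{ass:SP_wellposed}(ii) together with $|D_u b|\le M_u$ from \eqref{eq:SP_drift_ass_2}, one obtains \eqref{eq:coeff_bound_11} and \eqref{eq:coeff_bound_13} after noting that the factor $1+|p|$ arises from bounding $|D_{xu}F+D_{xu}b\cdot p|\le M_{xu}(1+|p|)$.

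Finally, for item (ii), I would once more invoke the envelope theorem to write $D_x H(x,p) = -D_x F(x,w(x,p)) - D_x b(x,w(x,p))^\top p$, so \eqref{eq:coeff_bound_4} is immediate from the Lipschitz bound on $F$ in \eqref{eq:SP_grad_to_control_ass} and from $|D_x b|\le M_x$ of Assumption \ref{ass:SP_wellposed}(i), once we evaluate at $p=\nabla\varphi^{T,g}_s(x)$ and invoke \eqref{eq:SP_grad_est_7}. The two remaining mixed-order bounds \eqref{eq:coeff_bound_2} and \eqref{eq:coeff_bound_3} are then obtained by differentiating this identity once more in $p$ and in $x$ respectively, applying the chain rule through $w(x,p)$, and inserting the bounds from (i) and (iii); the quadratic term $(1+M_x^{\varphi,g})^2$ in \eqref{eq:coeff_bound_3} originates from the composition of $D_x w$ (which contributes a factor $1+M_x^{\varphi,g}$) with $D_{xu}b\cdot p$ (which contributes another such factor).
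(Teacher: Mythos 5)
Your proposal is correct and follows essentially the same route as the paper's proof: first-order optimality for $w$, strong convexity from \eqref{eq:SP_Fconv_ass} to bound $|w|$, implicit differentiation of the optimality condition to control $D_x w$ and $D_p w$, and the envelope identity $D_pH=-b(x,w(x,p))$, $D_xH=-D_xF-D_xb^\top p$ followed by the chain rule for the second-order bounds. The only (minor) thing to flag is that you carry the square on $\omega_{|p|}$ from \eqref{eq:SP_Fconv_ass} into the estimate $|\partial_u G_{x,p}(w)-\partial_u G_{x,p}(0)|\gtrsim \omega_{|p|}^{2}|w|$, whereas all the displayed bounds in the lemma (and the paper's own proof) use $\omega$ to the first power; this is an internal notational inconsistency of the paper, not a gap in your argument, but you should state explicitly which convention you are using so the exponent matches the final display.
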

The proof of Lemma \ref{lem:coeff_bound} is deferred to the Appendix section. In the next Theorem we prove exponential stability with respect to the initial condition. 
\begin{theorem}\label{thm:contraction_SP}
Let Assumption \ref{ass:SP_wellposed}-\ref{ass:SP} hold. Moreover, assume that $g\in C^3_{\lip}(\bbRD)$. 
Then for $t\leq s\leq T$ we have 
\be\label{eq:contraction_SP_3}
\kappa_{\beta^{T,g}_s}(r) \geq \kappa(r) \quad \forall r>0, \quad \text{with} \quad \kappa(r)=\bar\kappa_b(r) -\frac{4M_u^2(1+M_x^{\varphi,g})}{\sigma^2\omega_{M_x^{\varphi,g}}\,\,r}.
\ee
Moreover, setting 
\be\label{eq:contraction_SP_5}
f=\bmf(\kappa),\overrightarrow{\lambda}=\bm\lambda(\kappa),\overrightarrow{C}=\bmC^{-1}(\kappa)\ee
the estimates
\begin{subequations}
\be\label{eq:contraction_SP_1_a}
W_f(\mathrm{Law}(\bmX^{t,\xi,T,g}_s),\mathrm{Law}(\bmX^{t,\xi',T,g}_s)) \leq  W_f(\mathrm{Law}(\xi),\mathrm{Law}(\xi'))\exp(-{\overrightarrow{\lambda}} (s-t))
\ee
\be\label{eq:contraction_SP_1_b}
W_1(\mathrm{Law}(\bmX^{t,\xi,T,g}_s),\mathrm{Law}(\bmX^{t,\xi',T,g}_s)) \leq{\overrightarrow{C}} W_1(\mathrm{Law}(\xi),\mathrm{Law}(\xi'))\exp(-{\overrightarrow{\lambda}} (s-t))
\ee
\end{subequations}

hold uniformly on $0\leq t\leq s\leq T$ and $\mathrm{Law}(\xi),\mathrm{Law}(\xi')\in\cP_1(\bbRD)$.

\end{theorem}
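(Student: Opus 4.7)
The plan is to decompose the proof into two parts: first, derive the pointwise lower bound \eqref{eq:contraction_SP_3} on $\kappa_{\beta^{T,g}_s}$, and second, feed this lower bound into the coupling-by-reflection machinery of Section \ref{sec:coup_by_ref} to obtain the contraction estimates \eqref{eq:contraction_SP_1_a} and \eqref{eq:contraction_SP_1_b}.

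For the pointwise bound, the key observation is that the optimal drift $\beta^{T,g}_s(x) = b(x, w(x,\nabla\varphi^{T,g}_s(x)))$ is a uniformly bounded perturbation of the uncontrolled drift $b(\cdot,0)$. Specifically, setting $R_s(x) := \beta^{T,g}_s(x) - b(x,0)$, the uniform gradient estimate \eqref{eq:SP_grad_est_7} from Lemma \ref{lemma:SP_grad_est} together with \eqref{eq:drift_lipschitz_1} yields $|R_s(x)| \leq M_u^2(1+M_x^{\varphi,g})/\omega_{M_x^{\varphi,g}}$ uniformly in $(s,x)$. I would then write
\[
\beta^{T,g}_s(x) - \beta^{T,g}_s(x') = \bigl[b(x,0) - b(x',0)\bigr] + R_s(x) - R_s(x'),
\]
take inner product with $x-x'$, bound the first bracket by $-(\sigma^2/2)\bar\kappa_b(|x-x'|)|x-x'|^2$ directly from the definition of $\bar\kappa_b$, and bound the residual by $2M_u^2(1+M_x^{\varphi,g})|x-x'|/\omega_{M_x^{\varphi,g}}$ via Cauchy-Schwarz. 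Dividing through by $-\sigma^2|x-x'|^2/2$ gives \eqref{eq:contraction_SP_3}.

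Next, I would check that $\kappa \in K$: since only a multiple of $1/r$ is subtracted from $\bar\kappa_b \in K$, the behavior at infinity is unchanged and $\int_0^1 r\cdot(1/r)\,\De r<+\infty$ preserves the local integrability of $\kappa^-$. Hence $(f,\overrightarrow\lambda,\overrightarrow C)$ are well-defined via Proposition \ref{prop:Lyapunov_funct_coup_by_ref}. Under the hypothesis $g\in C^3_{\lip}$, Proposition \ref{item_1:SP_opt_cond} ensures $\varphi^{T,g}\in C^{1,2}_p$, so $\beta^{T,g}_s$ is locally Lipschitz in space and Proposition \ref{prop:contraction_coup_by_ref} applies to $\De X_s = \beta^{T,g}_s(X_s)\De s + \sigma \De B_s$. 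Construct the reflection coupling $(X_s,X'_s)_{s\in[t,T]}$ with initial condition $(\xi,\xi')$ and redo the It\^o computation performed within the proof of Lemma \ref{lemma:SP_grad_est}: using the monotonicity part of Proposition \ref{item_3:Lyapunov_funct_coup_by_ref} applied to $\kappa'=\kappa$ and $\kappa_{\beta^{T,g}_s}\geq \kappa$, the function $f=\bmf(\kappa)$ satisfies $f''(r) - (r/4)\kappa_{\beta^{T,g}_s}(r)f'(r) \leq -\bm\lambda(\kappa)/(2\sigma^2)\cdot f(r)$, so
\[
\De f(|X_s-X'_s|) \leq -\overrightarrow\lambda\, f(|X_s-X'_s|)\,\De s + (\text{local martingale}).
\]
Taking expectation, applying Gronwall, optimizing over couplings of $(\xi,\xi')$, and finally invoking the equivalence $\bmC(\kappa)r \leq f(r) \leq r$ of Proposition \ref{item_1:Lyapunov_funct_coup_by_ref} yields both \eqref{eq:contraction_SP_1_a} and \eqref{eq:contraction_SP_1_b}.

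The main subtle point is really the pointwise estimate \eqref{eq:contraction_SP_3}: a naive approach would attempt to directly control the Lipschitz constant of $x\mapsto\beta^{T,g}_s(x)$ by estimating the Lipschitz constant of $\nabla\varphi^{T,g}_s$, which would require a uniform Hessian bound and hence would rely on the additional Assumption \ref{ass:SP_wellposed} that we wish to avoid at this stage. The bounded-perturbation trick above elegantly bypasses this by exploiting only the $L^\infty$ boundedness of $R_s$, at the price of introducing a $1/r$ singularity in $\kappa$ near the origin. Crucially, this singularity is harmless for the reflection-coupling framework precisely because of the integrability condition $\int_0^1 r\kappa^-(r)\,\De r<+\infty$ baked into the definition of the class $K$.
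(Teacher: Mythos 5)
Your proof is correct and follows the same strategy as the paper: decompose $\beta^{T,g}_s$ as a bounded perturbation of $b(\cdot,0)$ using \eqref{eq:drift_lipschitz_1} and \eqref{eq:SP_grad_est_7} to obtain the pointwise bound \eqref{eq:contraction_SP_3}, then feed this into the reflection-coupling contraction estimate of Proposition \ref{prop:contraction_coup_by_ref}, together with the monotonicity property of Proposition \ref{item_3:Lyapunov_funct_coup_by_ref}. If anything your explicit invocation of Proposition \ref{item_3:Lyapunov_funct_coup_by_ref} to pass from $\bar\kappa_{\beta^{T,g}}$ to the lower bound $\kappa$ in the differential inequality is slightly more careful than the paper's terse citation of Proposition \ref{item_2:contraction_coup_by_ref}.
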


\begin{proof}
From Proposition \ref{prop:SP_opt_cond} we know that $(\bmX^{t,\xi,T,g}_s)_{s\in[t,T]}$ and $(\bmX^{t,\xi',T,g}_s)_{s\in[t,T]}$ are two strong solutions of the SDE \eqref{eq:SDE_1} with
$\beta_s=\beta_s^{T,g}$ and initial conditions $\xi$ and $\xi'$ respectively. We have for all $x,x'\in\bbRD$ and $t\leq s\leq T$:
\bes
\begin{split}
   &-\ip{\beta_s^{T,g}(x)-\beta_s^{T,g}(x')}{x-x'}\\
    &=-\ip{\beta_s^{T,g}(x)-b(x,0)}{x-x'}+\ip{\beta_s^{T,g}(x')-b(x',0)}{x-x'}\\
    &-\ip{b(x,0)-b(x',0)}{x-x'}.
\end{split}
\ees
We have
\bes
\begin{split}
|\ip{\beta_s^{T,g}(x)-b(x,0)}{x-x'}|=|\ip{b(x,w(x,\nabla\varphi^{T,g}_s(x)))-b(x,0)}{x-x'}|\\ \stackrel{\eqref{eq:drift_lipschitz_1}}{\leq} \frac{M_u^2(1+M_x^{\varphi,g})}{\omega_{M_x^{\varphi,g}}} |x-x'|
\end{split}
\ees
and the term $|\ip{\beta_s^{T,g}(x')-b(x',0)}{x-x'}|$ can be bounded in the same way. But then, we deduce that 
\bes
\kappa_{\beta^{T,g}_s}(r) \geq \bar\kappa_b(r) -\frac{4M_u^2(1+M_x^{\varphi,g})}{\sigma^2\omega_{M_x^{\varphi,g}}\,r} \quad \forall r\geq 0,t\leq s\leq T,
\ees
which is \eqref{eq:contraction_SP_3}. Moreover, since $\bar\kappa_b\in K$ by assumption, we conclude that $\kappa\in K$, where $\kappa$ is defined at \eqref{eq:contraction_SP_3}. The desired conclusion then follows from  Proposition \ref{item_2:contraction_coup_by_ref}.
\end{proof}

\subsection{Hessian bounds}
Here is the main result of this subsection. 
\begin{prop}\label{prop:SP_Hess_est}
Let Assumption \ref{ass:SP_wellposed}-\ref{ass:SP} hold and $g\in C^3_{\lip}(\bbRD)$ with  \[\sup_{x\in\bbRD}|\nabla^2 g |<+\infty.\] Moreover, let $\kappa$ be given by \eqref{eq:contraction_SP_3} and $(\lambda,C)=(\bm\lambda(\kappa),\bmC(\kappa))$. Then for all $0\leq t\leq T$ we have
\be\label{eq:SP_Hess_est_2}
\sup_{x\in\bbRD  } |\nabla^2\varphi^{T,g}_t(x)| \leq \inf_{\theta\in(0,T-t) }A_{\theta,T-t}\,\,e^{ M^{H,g}_{xp}\,\,\theta}:=M^{\varphi,g}_{xx,T-t},
\ee
where 
\bes
A_{\theta,T-t}:=C^{-1} \Big(2M_x^g \frac{\lambda}{e^{\lambda (T-t)}-1} + 2M^{H,g}_{x}\int_{\theta}^{+\infty} \frac{\lambda}{e^{\lambda s}-1} \De s+ \frac{M^{H,g}_{xx}}{\lambda}\Big).
\ees
\end{prop}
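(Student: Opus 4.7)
My plan is a probabilistic splitting argument built on the Feynman--Kac representation, coupling by reflection, and a time-splitting at a free parameter $\theta$. Differentiating the HJB equation \eqref{eq:SP_HJB} in $x$ and recognising the resulting equation as a linear backward PDE with transport drift $\beta^{T,g}_s=-D_pH(\cdot,\nabla\varphi^{T,g}_s(\cdot))$, I obtain the representation
\[\nabla\varphi^{T,g}_t(x)=\bbE\bigl[\nabla g(\bmX^{t,x,T,g}_T)\bigr]-\bbE\!\int_t^T\!D_xH\bigl(\bmX^{t,x,T,g}_s,\nabla\varphi^{T,g}_s(\bmX^{t,x,T,g}_s)\bigr)\,\De s,\]
which is classically valid since $g\in C^3_{\lip}$ with bounded Hessian gives $\nabla\varphi^{T,g}_s\in C^{1,2}_p$. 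For fixed $x,x'\in\bbRD$ I then couple the two optimal SDEs by reflection: by Theorem \ref{thm:contraction_SP}, $\bar\kappa_{\beta^{T,g}_s}\ge\kappa\in K$, so Proposition \ref{prop:contraction_coup_by_ref} supplies both the coupling-time bound $\bbP[\bar\bmX^x_s\neq\bar\bmX^{x'}_s]\le C^{-1}|x-x'|\lambda/(e^{\lambda(s-t)}-1)$ and the Wasserstein contraction $\bbE|\bar\bmX^x_s-\bar\bmX^{x'}_s|\le C^{-1}|x-x'|e^{-\lambda(s-t)}$.

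For $\theta\in(0,T-t)$ I split the time integral at $t+\theta$ and estimate each piece with a different tool. On $[t+\theta,T]$ and for the terminal term, I dominate integrands by the uniform bounds $|\nabla g|\le M^g_x$ and $|D_xH(\cdot,\nabla\varphi^{T,g}_s(\cdot))|\le M^{H,g}_x$ (using \eqref{eq:coeff_bound_4}) times the non-coupling indicator, and apply the coupling-time bound; extending $\int_\theta^{T-t}$ to $\int_\theta^{+\infty}$ yields the first two contributions of $A_{\theta,T-t}$. On $[t,t+\theta]$ I invoke instead the Lipschitz estimates \eqref{eq:coeff_bound_2}--\eqref{eq:coeff_bound_3},
\[|D_xH(y,\nabla\varphi^{T,g}_s(y))-D_xH(y',\nabla\varphi^{T,g}_s(y'))|\le\bigl(M^{H,g}_{xx}+M^{H,g}_{xp}M_s\bigr)|y-y'|,\quad M_s:=\sup_x|\nabla^2\varphi^{T,g}_s(x)|,\]
combined with the Wasserstein contraction in expectation. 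Dividing by $|x-x'|$, supremizing over $x$ and using $\int_0^\infty e^{-\lambda u}\De u=1/\lambda$ to absorb the $M^{H,g}_{xx}$ contribution into the third term of $A_{\theta,T-t}$, I reach an integral inequality
\[M_t\le A_{\theta,T-t}+M^{H,g}_{xp}\!\int_t^{t+\theta}\!M_s\,\varrho(s-t)\,\De s,\]
where the kernel $\varrho$ comes from the reflection-coupling contraction.

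A backward Gronwall argument on $[t,t+\theta]$ then produces the factor $e^{M^{H,g}_{xp}\theta}$ and yields $M_t\le A_{\theta,T-t}\,e^{M^{H,g}_{xp}\theta}$; taking the infimum over $\theta\in(0,T-t)$ concludes. The main obstacle I anticipate is precisely this Gronwall step: the integrand depends self-referentially on $M_s$ dressed with the contraction kernel, and producing the \emph{clean} exponent $M^{H,g}_{xp}\theta$ featured in the statement---unadulterated by the $C^{-1}$ factor that naturally appears in the Wasserstein bound---requires either a refined short-time estimate for $\bbE|\bar\bmX^x_s-\bar\bmX^{x'}_s|$ (whose ratio to $|x-x'|$ tends to $1$, not $C^{-1}$, as $s\downarrow t$) or an iteration of the inequality on nested sub-intervals. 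The remaining steps are essentially bookkeeping in comparison.
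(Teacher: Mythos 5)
Your splitting at $t+\theta$, the use of the coupling-time bound on $[t+\theta,T]$ together with the uniform bounds $|\nabla g|\le M^g_x$ and $|D_xH|\le M^{H,g}_x$, and the Lipschitz bounds \eqref{eq:coeff_bound_2}--\eqref{eq:coeff_bound_3} on $[t,t+\theta]$, are all exactly the paper's ingredients. The one point where you diverge is also exactly where you flag the difficulty: you try to close the recursion through $M_s:=\sup_x|\nabla^2\varphi^{T,g}_s(x)|$, which forces you to write $\bbE[|\nabla\varphi^{T,g}_r(\bar\bmX^x_r)-\nabla\varphi^{T,g}_r(\bar\bmX^{x'}_r)|]\le M_r\,\bbE[|\bar\bmX^x_r-\bar\bmX^{x'}_r|]\le M_r\,C^{-1}|x-x'|e^{-\lambda(r-t)}$, and the spurious $C^{-1}$ then pollutes the Gronwall exponent. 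That is a genuine gap; neither a sharper short-time $W_1$ bound nor an iteration on nested sub-intervals removes it cleanly, because the $C^{-1}$ comes from comparing the twisted metric $f(r)$ with $r$ and is uniform in time.

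The paper sidesteps this by never reintroducing the Hessian supremum under the integral. It tracks the scalar quantity
\[
h(s):=\bbE\bigl[\,|\nabla\varphi^{T,g}_s(\bar\bmX^x_s)-\nabla\varphi^{T,g}_s(\bar\bmX^{x'}_s)|\,\bigr],\qquad s\in[t,t+\theta],
\]
evaluated along the reflection-coupled pair. Starting from the pathwise martingale representation \eqref{eq:SP_Hess_est_1} for each $y=x,x'$ (the one you also derive by differentiating the HJB, just not yet integrated in expectation), subtracting, conditioning on $\cF_s$, taking absolute values and then full expectation, one gets
\[
h(s)\;\le\;\tilde{A}_{\theta,T-t}\,|x-x'| \;+\;\int_s^{t+\theta}\Bigl(M^{H,g}_{xx}\,\bbE[|\bar\bmX^x_r-\bar\bmX^{x'}_r|]\;+\;M^{H,g}_{xp}\,h(r)\Bigr)\De r .
\]
The $M^{H,g}_{xx}$ term is the only one that sees the contraction $C^{-1}e^{-\lambda(r-t)}$ and it is absorbed into the constant $A_{\theta,T-t}$ (producing the $M^{H,g}_{xx}/(\lambda C)$ contribution), while the $M^{H,g}_{xp}$ term carries the constant Gronwall kernel $1$, which yields precisely $e^{M^{H,g}_{xp}\theta}$. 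Since $\bar\bmX^x_t=x$ and $\bar\bmX^{x'}_t=x'$ are deterministic, $h(t)=|\nabla\varphi^{T,g}_t(x)-\nabla\varphi^{T,g}_t(x')|$, and dividing by $|x-x'|$ and letting $|x-x'|\to 0$ gives the stated Hessian bound. So the fix is conceptually small but essential: make $h$ itself, not $M_s$, the unknown in the Gronwall inequality.
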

Note that the above constants do not depend in $\nabla^2 g$ and the boundedness assumption on $\nabla^2g$ will be later removed at Proposition \ref{cor:reg_eff}.
The proof of Proposition \ref{prop:SP_Hess_est} appeals to a representation of $\nabla\varphi^{T,g}_{t}(x)$ through the stochastic maximum principle, see \eqref{eq:SP_Hess_est_1} below. Under slightly different assumptions than ours, such representation is well known, see e.g. \cite[Thm 3.2, Ch.3]{yong1999stochastic}.
To prepare for the proof of the Hessian bound, we first establish some preliminary rough bounds on the value function and its derivatives.
\begin{prop}\label{prop:aux_hess_bd}
Let Assumption \ref{ass:SP_wellposed}-\ref{ass:SP} hold and $g\in C^3_{\lip}(\bbRD)$ with \[\sup_{x\in\bbRD}|\nabla^2 g (x)|<+\infty.\] Then, for any $T>0$ there exist $C_T\in(0,+\infty)$ such that 
\be\label{eq:aux_hess_bd_1}
\begin{split}
|H(x,\nabla\varphi^{T,g}(x))|+|\varphi^{T,g}_t(x)|+|\partial_t\varphi^{T,g}_t(x)|+|\nabla^2\varphi^{T,g}_t(x)|&\leq C_T(1+|x|),\\
|D_x H|(x,\nabla\varphi^{T,g}_t(x))+|\nabla\varphi^{T,g}_t(x)|&\leq C_T
\end{split}
\ee
hold uniformly on $0\leq t\leq T$ and $x\in\bbRD$.
\end{prop}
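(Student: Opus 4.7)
The plan is to prove the six claimed bounds one after another, reusing Lemma \ref{lemma:SP_grad_est} and Lemma \ref{lem:coeff_bound} as inputs and saving the Hessian bound for last.

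First I would observe that the uniform gradient bound $|\nabla\varphi^{T,g}_t(x)|\le M^{\varphi,g}_x$ is immediate from \eqref{eq:SP_grad_est_7}, and \eqref{eq:coeff_bound_4} then yields $|D_x H|(x,\nabla\varphi^{T,g}_t(x))\le M^{H,g}_x$ with no $|x|$-dependence. Next, because $|\nabla\varphi^{T,g}_t(x)|\le M^{\varphi,g}_x$, \eqref{eq:control_bound_1} gives a uniform bound $|w(x,\nabla\varphi^{T,g}_t(x))|\le \bar u$, so writing
\begin{equation*}
H(x,\nabla\varphi^{T,g}_t(x))=-F(x,w(x,\nabla\varphi^{T,g}_t(x)))-b(x,w(x,\nabla\varphi^{T,g}_t(x)))\cdot\nabla\varphi^{T,g}_t(x),
\end{equation*}
and combining $|F(x,u)|\le |F(0,u)|+M^F_x|x|$ from \eqref{eq:SP_grad_to_control_ass} with $|b(x,u)|\le |b(0,u)|+M_x|x|$ from Assumption \ref{ass:SP_wellposed}(i), one gets $|H(x,\nabla\varphi^{T,g}_t(x))|\le C_T(1+|x|)$.

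For the linear growth of $\varphi^{T,g}_t$, the Lipschitz bound \eqref{eq:SP_grad_est_7} reduces the task to bounding $|\varphi^{T,g}_t(0)|$ uniformly in $t\in[0,T]$. The upper bound follows from testing with the admissible control $u\equiv 0$: since $b(\cdot,0)$ has linear growth, the uncontrolled diffusion $X^{t,0,0}$ has moments bounded on $[t,T]$ by $C_T$, and the Lipschitz bounds on $F(\cdot,0)$ and $g$ give $\varphi^{T,g}_t(0)\le \bbE[\int_t^T F(X^{t,0,0}_s,0)\De s + g(X^{t,0,0}_T)]\le C_T$. The matching lower bound comes from the representation along the optimal process: using $F(x,u)\ge F(0,u)-M^F_x|x|$ and $g(x)\ge g(0)-M^g_x|x|$, one only needs moment bounds on $\bmX^{0,0,T,g}$, which hold because its drift satisfies $|\beta^{T,g}_s(x)|\le|b(x,0)|+M^2_u(1+M^{\varphi,g}_x)/\omega_{M^{\varphi,g}_x}$ by \eqref{eq:drift_lipschitz_1} and hence has linear growth in $x$.

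The main obstacle is the linear growth of the Hessian. Here I would argue as follows: Proposition \ref{item_1:SP_opt_cond} already gives $\varphi^{T,g}\in C^{1,2}_p([0,T)\times\bbRD)$, so $\nabla^2\varphi^{T,g}_t$ exists pointwise with \emph{some} polynomial growth, but this is not yet the claimed linear one. Since $g\in C^3_{\lip}$ with $\sup|\nabla^2 g|<+\infty$, one can use the stochastic maximum principle to represent $\nabla\varphi^{T,g}_t(x)=Y^{t,x}_t$ for an adjoint BSDE driven by the optimal pair $(\bmX^{t,x,T,g},\bmu^{t,x,T,g})$, with terminal value $\nabla g(\bmX_T^{t,x,T,g})$ and driver linear in $Y$ with coefficient $D_xb(\bmX,\bmu)^\top Y+D_xF(\bmX,\bmu)$. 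Differentiating this FBSDE in the initial condition $x$ produces a linear BSDE for $\nabla_x Y^{t,x}_t=\nabla^2\varphi^{T,g}_t(x)$ whose terminal datum is $\nabla^2 g(\bmX_T)\cdot\nabla_x\bmX_T$ and whose source involves $D_{xx}F, D_{xx}b, D_{xu}F, D_{xu}b$ and the first and second variations of $\bmX^{t,x,T,g}$. By Assumption \ref{ass:SP_wellposed} the $D_{xx}$ and $D_{xu}$ terms are bounded, the first variation has exponentially (in $T$) bounded moments because $|D_xb|\le M_x$, and the second variation has linear-in-$x$ moments because $|D_{xx}b|\le M_{xx}$ and $\bmX$ has linear-in-$x$ moments; assembling these estimates via Gronwall on $[t,T]$ delivers $|\nabla^2\varphi^{T,g}_t(x)|\le C_T(1+|x|)$. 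Finally, the HJB equation
\begin{equation*}
\partial_t\varphi^{T,g}_t(x)=H(x,\nabla\varphi^{T,g}_t(x))-\tfrac{\sigma^2}{2}\Delta\varphi^{T,g}_t(x)
\end{equation*}
transfers the linear growth of $H(\cdot,\nabla\varphi^{T,g}_t)$ and of the Hessian to $\partial_t\varphi^{T,g}_t$, completing the proof.
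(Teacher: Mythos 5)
Your treatment of the gradient, $D_xH$, $H$, and $\varphi$ bounds is fine and, modulo being more explicit, tracks the paper's argument. The $\partial_t\varphi$ bound via the HJB equation is also correct \emph{given} the Hessian bound (the paper instead gets it directly from Krylov's estimate and then uses it, together with a semiconcavity estimate, to obtain the Hessian, so the order of derivation is reversed; both directions are legitimate).

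The gap is in your Hessian step, and it is a circularity. You claim ``the first variation has exponentially (in $T$) bounded moments because $|D_xb|\le M_x$.'' That would be true for the \emph{open-loop} first variation $\nabla_x X^{t,x,u}_s$ at frozen control $u$, but the quantity you actually need is the first variation of the \emph{closed-loop} optimal state, whose drift is $\beta^{T,g}_s(\cdot)=-D_pH(\cdot,\nabla\varphi^{T,g}_s(\cdot))$. Its spatial derivative is $-D_{xp}H-D_{pp}H\cdot\nabla^2\varphi^{T,g}_s$, which contains the very Hessian you are trying to bound. Equivalently, in the FBSDE picture, differentiating the maximum-principle system in $x$ produces a \emph{fully coupled} linear FBSDE in $(\nabla_x\bmX_s,\nabla_x Y_s)$: the control variation $\nabla_x\bmu_s = D_xw\cdot\nabla_x\bmX_s + D_pw\cdot\nabla_x Y_s$ feeds $\nabla_x Y_s$ back into the forward equation for $\nabla_x\bmX_s$. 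Your Gronwall argument therefore does not close with $M_x$ alone, and the bound on the first variation cannot be obtained independently of the bound on $K_s=\nabla_x Y_s$. (The analogous claim for the second variation has the same defect.) A bootstrap starting from the a priori polynomial bound of Proposition~\ref{item_1:SP_opt_cond} might be possible, but it is not in the proposal, and well-posedness of the coupled linear FBSDE on $[t,T]$ would itself require justification.

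The paper avoids the issue entirely: it applies a semiconcavity estimate from Krylov (\cite[Sec 4.2, Thm 3]{krylov2008controlled}) to obtain a one-sided bound $\nabla^2\varphi^{T,g}_t(x)\preceq C'_T\,\mathrm{I}$, then reads the HJB equation to bound $|\Delta\varphi^{T,g}_t(x)|$ (using the already-established linear growth of $\partial_t\varphi$ and $H(\cdot,\nabla\varphi^{T,g}_t)$), and combines the trace bound with the one-sided matrix bound to control the smallest eigenvalue. That route requires no FBSDE differentiation and has no circularity. I would suggest replacing your Hessian step with this semiconcavity-plus-Laplacian argument, or, if you want to keep an FBSDE flavor, at least explain how the coupling through $\nabla_x\bmu$ is controlled so that the Gronwall closes.
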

The proof of this technical result is postponed to the appendix.

\begin{proof}[Proof of Proposition \ref{prop:SP_Hess_est}]
Let $t,T>0$ and $x,x'\in\bbRD$. Since $(s,x)\mapsto \beta^{T,g}_s(x)$ is locally Lipschitz by Proposition \ref{item_1:SP_opt_cond} and $\bar\kappa_{\beta^{T,g}}\in K$ by Theorem \ref{thm:contraction_SP}, we can apply Proposition \ref{item_1:contraction_coup_by_ref} to conclude that coupling by reflection of $(\bmX^{t,x,T,g}_s)_{s\in[t,T]}$ and $(\bmX^{t,x',T,g}_s)_{s\in[t,T]}$ can be constructed as a strong solution to \eqref{eq:coup_by_ref} and we denote it $(\bar{\bmX}^x_s,\bar{\bmX}^{x'}_s)_{s\in[t,T]}$. We now consider a spatial regularization $\varphi^{T,g}$, i.e. we define
\be\label{eq:SP_Hess_est_11}
\varphi^{T,g,\varepsilon}=  \varphi^{T,g}\ast \gamma_{\varepsilon}, \quad \gamma_{\varepsilon}(y)= \frac{1}{\sqrt{2\pi\varepsilon^d}}\exp(-|y|^2/2\varepsilon).
\ee
Note that, thanks to the bounds \eqref{eq:aux_hess_bd_1} we can exchange derivatives and integrals in the $\varphi^{T,g}\ast\gamma^{\varepsilon}$. In particular, we have the following relations for all $1\leq i,j\leq d$:
\bes
\begin{split}
\partial_{x_i}\varphi^{T,g,\varepsilon}=(\partial_{x_i}\varphi^{T,g})\ast\gamma_{\varepsilon}, \quad \partial_{x_i x_j}\varphi^{T,g,\varepsilon}=(\partial_{x_ix_j}\varphi^{T,g})\ast\gamma_{\varepsilon},\\\
\partial_{x_j x_j}(\partial_{x_i}\varphi^{T,g,\varepsilon})=\partial_{x_i}\big((\partial_{x_j,x_j}\varphi^{T,g})\ast\gamma_{\varepsilon}\big),\quad \partial_{t}\big(\partial_{x_i}\varphi^{T,g,\varepsilon}\big)=\partial_{x_i}\big((\partial_{t}\varphi^{T,g})\ast \gamma_{\varepsilon}\big).
\end{split}
\ees
Using these identities and applying It\^o formula to the function $\nabla\varphi^{T,g,\varepsilon}$  we find that for any $\varepsilon>0$ and $y=x,x',s\leq T$
\bes
\begin{split}
(\nabla\varphi^{T,g}_s\ast\gamma_{\varepsilon})(\bar\bmX^y_s) &\stackrel{\eqref{eq:SP_HJB}}{=} (\nabla g\ast\gamma_{\varepsilon})(\bar\bmX^{y}_T )-\int_s^T D_x\Big(H(\cdot,\nabla\varphi^{T,g}_r(\cdot)) \ast \gamma_{\varepsilon}\Big)\big(\bar\bmX^{y}_r\big)\De r\\
&-\int_s^T \big(\nabla^2\varphi^{T,g}_r\cdot D_pH(\cdot,\nabla\varphi^{T,g}_r(\cdot))\big) \ast \gamma_{\varepsilon}\big(\bar\bmX^{y}_r\big)\De r\\
&+\int_s^T\nabla^2\varphi^{T,g,\varepsilon}_r\cdot D_pH(\bar\bmX^{y}_r)\De r +M_T-M_s\\
&=\nabla (g\ast\gamma_{\varepsilon})(\bar\bmX^{y}_T )-\int_s^T \big(D_xH(\cdot,\nabla\varphi^{T,g}_r(\cdot))\big) \ast \gamma_{\varepsilon}\big(\bar\bmX^{y}_r\big)\De r\\
&-\int_s^T \big(\nabla^2\varphi^{T,g}_r\cdot D_pH(\cdot,\nabla\varphi^{T,g}_r(\cdot))\big) \ast \gamma_{\varepsilon}\big(\bar\bmX^{y}_r\big)\De r\\
&+\int_s^T\nabla^2\varphi^{T,g,\varepsilon}_r\cdot D_pH(\bar\bmX^{y}_r)\De r+M_T-M_s,
\end{split}
\ees
where $M_T-M_s$ is a square integrable martingale.
Letting $\varepsilon\rightarrow 0$ and relying once again on \eqref{eq:aux_hess_bd_1} to justify the exchange of limits and integrals, we arrive at
\be\label{eq:SP_Hess_est_1}
\nabla\varphi^{T,g}_s(\bar\bmX^y_s)=\nabla g(\bar\bmX^{y}_T )-\int_s^T D_xH(\bar\bmX^{y}_r,\nabla\varphi^{T,g}_r(\bar\bmX^{y}_r))\De r + M_T-M_s,
\ee
where the above equality has to be understood in the almost sure sense. Fix now $\theta\in[0,T-t]$ and let $\kappa$ be as in \eqref{eq:contraction_SP_3}. Since $\bar\kappa_{\beta^{T,g}}\geq \kappa$ we can invoke Proposition \ref{item_3:contraction_coup_by_ref} to obtain
\be\label{eq:SP_Hess_est_5}
\begin{split}
\bbE\Big[\int_{t+\theta}^T| D_xH(\bar\bmX^{x}_s,\nabla\varphi^{T,g}_s(\bar\bmX^{x}_s))&-D_xH(\bar\bmX^{x'}_s,\nabla\varphi^{T,g}_s(\bar\bmX^{x'}_s))|\De s +|\nabla g(\bar\bmX^{x'}_T )-\nabla g(\bar\bmX^{x}_T )|\Big]\\
&\stackrel{\eqref{eq:coeff_bound_4}}\leq   2M_x^{H,g}\int_{t+\theta}^T \bbP[\bar{\bmX}^x_r\neq\bar{\bmX}^{x'}_r] \De r +2M_x^g \bbP[\bar{\bmX}^x_T\neq\bar{\bmX}^{x'}_T]\\
&\stackrel{\text{Prop.}\ref{item_3:contraction_coup_by_ref}}{\leq}2|x-x'|M^{H,g}_{x}\int_{\theta}^{T-t} \frac{\lambda}{C(\exp(\lambda r)-1)} \De r \\
&+2|x-x'| M_x^g \frac{C^{-1}\lambda}{\exp(\lambda(T-t))-1}:= |x-x'|\tilde{A}_{\theta,T-t}.
\end{split}
\ee
Next, define for $s\in[t,t+\theta]$ the function $h(s)=\bbE[|\nabla\varphi^{T,g}_s(\bar\bmX^{x}_s)-\nabla\varphi^{T,g}_s(\bar\bmX^{x'}_s)|]$. From \eqref{eq:SP_Hess_est_1},\eqref{eq:SP_Hess_est_5} and Proposition \ref{prop:contraction_coup_by_ref} we directly obtain
\bes
\begin{split}
h(s) &\stackrel{\substack{\eqref{eq:coeff_bound_2}\\\eqref{eq:coeff_bound_3}}}{\leq} \int_s^{t+\theta} M^{H,g}_{xx}\bbE[|\bar\bmX^{x}_r-\bar\bmX^{x'}_r|] +M^{H,g}_{xp}\bbE[|\nabla\varphi^{T,g}_r(\bar\bmX^{x}_r)-\nabla\varphi^{T,g}_r(\bar\bmX^{x'}_r)|]\De r+|x-x'|\tilde{A}_{\theta,T-t}\\
     &\stackrel{\text{Prop.}\, \ref{item_2:contraction_coup_by_ref}}{\leq} \int_s^{t+\theta} M^{H,g}_{xx}|x-x'|C^{-1}\exp(-\lambda(r-t)) +M^{H,g}_{xp}h(r)\De r+|x-x'|\tilde{A}_{\theta,T-t}\\
 &\leq \Big(\frac{M^{H,g}_{xx}}{\lambda C}+\tilde{A}_{\theta,T-t}\Big)|x-x'|+ M^{H,g}_{xp}\int_s^{t+\theta}h(r)\De r\\
 &=|x-x'|A_{\theta,T-t}+ M^{H,g}_{xp}\int_s^{t+\theta}h(r)\De r.
 \end{split}
\ees
Defining $\tilde{h}(s')=h(t+\theta-s')$ for $s'\in[0,\theta]$ we can rewrite the above as
\bes
\tilde{h}(s') \leq  |x-x'| A_{\theta,T-t} +  M^{H,g}_{xp}\int_0^{s'} \tilde{h}(r')\De r' \quad \forall s'\in[0,\theta].
\ees
An application of Gronwall's lemma gives 
\bes
 \tilde{h}(\theta)= |\nabla\varphi^{T,g}_t(x)-\nabla\varphi^{T,g}_t(x')|\leq |x-x'| A_{\theta,T-t}e^{M^{H,g}_{xp}\theta}
\ees
The desired conclusion follows letting $|x-x'|\rightarrow0$.
\end{proof}
There are a number of interesting consequences that can be drawn from the Hessian estimates.
\begin{prop}\label{cor:reg_eff}
Let Assumption \ref{ass:SP_wellposed}-\ref{ass:SP} hold.
 
 \ben[(i)]
 \item If $g\in\mathrm{Lip}(\bbRD)$, $\varphi^{T,g}_t\in C^1_{\lip}(\bbRD)$ and $\|\nabla\varphi^{T,g}_t\|_{\mathrm{Lip}}\leq M^{\varphi,g}_{xx,T-t}$ for all $0\leq t< T$.
\item If $g\in\lip(\bbRD)$, $[0,T]\times\bbRD\ni(t,x)\mapsto \varphi^{T,g}_t(x)$ is a viscosity solution to \eqref{eq:SP_HJB}.
\item If \label{item_3:reg_eff} $g\in C^1_\lip(\bbRD)$, then the SDE \eqref{eq:Ham_SDE} admits a strong solution $(\bmX^{t,\xi,T,g}_s)_{s\in[0,T]}$ for any $\xi$ with $\mathrm{Law}(\xi)\in\cP_1(\bbRD)$ and pathwise uniqueness holds.
\een
\end{prop}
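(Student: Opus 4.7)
The three items will be established by removing the stronger $C^3$-type regularity on $g$ used in Propositions \ref{prop:SP_opt_cond} and \ref{prop:SP_Hess_est}. The common tool is the mollification argument already appearing at the end of the proof of Lemma \ref{lemma:SP_grad_est}: given $g\in\lip(\bbRD)$, we choose a sequence $(g^M)_{M\geq1}\subseteq C^3_\lip(\bbRD)$ with bounded Hessian such that $g^M\to g$ uniformly on $\bbRD$ and $\|g^M\|_\lip\to\|g\|_\lip$. Since for any admissible control $u$ the costs satisfy $|J^{T,g}_{t,\xi}(u)-J^{T,g^M}_{t,\xi}(u)|\leq\sup_x|g-g^M|$, minimizing over $u$ yields the uniform convergence
\bes
\sup_{t\in[0,T],\,x\in\bbRD}|\varphi^{T,g^M}_t(x)-\varphi^{T,g}_t(x)|\leq\sup_{x\in\bbRD}|g^M(x)-g(x)|\longrightarrow 0.
\ees
When $g\in C^1_\lip$ one may additionally require $\nabla g^M\to\nabla g$ uniformly and $\|\nabla g^M\|_\lip\to\|\nabla g\|_\lip$, which will be used for item (iii).

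For item (i), Proposition \ref{prop:SP_Hess_est} applied to each $g^M$ gives $\|\nabla\varphi^{T,g^M}_t\|_\lip\leq M^{\varphi,g^M}_{xx,T-t}$. This bound depends on $g^M$ only through its Lipschitz constant (entering through $M^{\varphi,g^M}_x$ and $M_x^{H,g^M}$) and is therefore uniform in $M$ for any fixed $t<T$. Combining this equi-Lipschitz control with the uniform convergence $\varphi^{T,g^M}_t\to\varphi^{T,g}_t$, an Arzel\`a--Ascoli extraction together with the standard calculus lemma on $C^1$ limits yields $\varphi^{T,g}_t\in C^1(\bbRD)$, with $\nabla\varphi^{T,g^M}_t\to\nabla\varphi^{T,g}_t$ locally uniformly and $\|\nabla\varphi^{T,g}_t\|_\lip\leq M^{\varphi,g}_{xx,T-t}$ by passing $\|g^M\|_\lip\to\|g\|_\lip$ in the explicit formula \eqref{eq:SP_Hess_est_2}. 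For item (ii), each $\varphi^{T,g^M}$ is a classical---hence viscosity---solution of the HJB equation by Proposition \ref{item_1:SP_opt_cond}, and the standard stability theorem for viscosity solutions of second-order parabolic equations (see e.g.~\cite[Ch.~5]{yong1999stochastic}) transfers this property to the uniform limit $\varphi^{T,g}$.

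For item (iii), part (i) together with Lemma \ref{lem:coeff_bound} already guarantees that the optimal drift $\beta^{T,g}_s(x)=-D_pH(x,\nabla\varphi^{T,g}_s(x))$ is Lipschitz in $x$ with constant uniformly bounded on every strip $[0,T-\varepsilon]\times\bbRD$, which by classical SDE theory provides strong existence and pathwise uniqueness on $[0,T-\varepsilon]$. The main obstacle is that $M^{\varphi,g}_{xx,T-t}$ diverges as $t\to T$, so the Lipschitz bound on $\beta^{T,g}_s$ is not a priori uniform in $t\in[0,T]$. To overcome this I will exploit $g\in C^1_\lip$ and revisit the proof of Proposition \ref{prop:SP_Hess_est}: the terminal contribution in \eqref{eq:SP_Hess_est_5}, originally estimated by $2M^g_x\,\bbP[\bar\bmX^x_T\neq\bar\bmX^{x'}_T]$, can instead be bounded using the Lipschitzianity of $\nabla g$ and the Wasserstein contraction in Proposition \ref{item_2:contraction_coup_by_ref} by
\bes
\bbE[|\nabla g(\bar\bmX^x_T)-\nabla g(\bar\bmX^{x'}_T)|]\leq \|\nabla g\|_\lip\, C^{-1}|x-x'|\,e^{-\lambda(T-t)},
\ees
which remains bounded as $t\to T$. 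Redoing the Gronwall step of that proof with this refined estimate (and applying it first to $g^M$ and then passing to the limit as in items (i)--(ii)) produces a Hessian bound uniform in $t\in[0,T]$, hence a globally Lipschitz drift on $[0,T]\times\bbRD$, from which strong existence and pathwise uniqueness for \eqref{eq:Ham_SDE} on the whole interval $[0,T]$ follow by classical SDE theory.
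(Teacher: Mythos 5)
Items (i) and (ii) of your proposal are correct and follow essentially the same mollification-plus-Arzel\`a--Ascoli route as the paper. Your treatment of item (iii), however, has a genuine gap. You write that the terminal contribution in \eqref{eq:SP_Hess_est_5} ``can instead be bounded using the Lipschitzianity of $\nabla g$'' via
$\bbE[|\nabla g(\bar\bmX^x_T)-\nabla g(\bar\bmX^{x'}_T)|]\leq \|\nabla g\|_\lip\, C^{-1}|x-x'|e^{-\lambda(T-t)}$,
and earlier that you may take the mollified $g^M$ with $\|\nabla g^M\|_{\lip}\to\|\nabla g\|_{\lip}$. But item (iii) only assumes $g\in C^1_\lip(\bbRD)=C^1(\bbRD)\cap\lip(\bbRD)$: this makes $\nabla g$ bounded, not Lipschitz. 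In general $\|\nabla g\|_\lip=+\infty$ under this hypothesis, so the refined bound you propose is vacuous and the Hessian blow-up near $t=T$ is not removed. You have implicitly upgraded the hypothesis to $g\in C^{1,1}$, which is precisely what Proposition \ref{cor:reg_eff}(iii) is designed to avoid.

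The paper circumvents this by not seeking a Hessian bound uniform up to $t=T$ at all. Instead it uses: (a) the Lipschitz bound \eqref{eq:reg_eff_3} on each strip $[0,T-\varepsilon]$ to get strong existence and pathwise uniqueness on $[0,T)$; (b) the dissipativity bound \eqref{eq:contraction_SP_3} together with the uniform linear-growth estimate $|\beta^{T,g}_s(x)|\leq |b(x,0)|+M_u^2(1+M_x^{\varphi,g})/\omega_{M_x^{\varphi,g}}$ (coming from \eqref{eq:drift_lipschitz_1}, valid up to $s=T$ since it only needs the gradient bound \eqref{eq:SP_grad_est_7}, not the Hessian bound) to conclude via Gronwall that $\sup_{s\in[0,T)}|\beta^{T,g}_s(\bmX^{0,x,T,g}_s)|<\infty$ almost surely; (c) dominated convergence to show $\int_0^{T-h}\beta^{T,g}_s(\bmX_s)\,\De s$ has an a.s.\ limit as $h\to 0$, so $\bmX^{0,x,T,g}_T$ is well defined and the process is a strong solution on all of $[0,T]$. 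Pathwise uniqueness again uses only the Lipschitz bound on $[0,T-\varepsilon]$. So the drift need not be globally Lipschitz on $[0,T]\times\bbRD$; the a.s.\ integrability of the drift along the trajectory is enough. Your strategy for (iii) needs to be replaced by an argument of this type.
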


\begin{proof}
Fix $g\in\mathrm{Lip}(\bbRD)$ and $0\leq t <T$. Then there exist a sequence $(g^M)_{M\in\N}\subseteq C^3_{\lip}(\bbRD)$ such that $\nabla^2g^M$ is bounded for all $M$ and
\bes 
\lim_{M\rightarrow +\infty} \sup_{x\in\bbRD}|g^M-g|(x)=0,\quad \lim_{M\rightarrow+\infty} \|g^M \|_{\mathrm{Lip}}= \|g\|_{\mathrm{Lip}}
\ees
From these properties, it follows that 
\be\label{eq:reg_eff_1}
\lim_{M\rightarrow+\infty}\sup_{\substack{t\in[0,T]\\x\in\bbRD}}|\varphi^{T,g^M}_t(x) -\varphi^{T,g}_t(x) |=0.
\ee
Since the constants $M_x^{\varphi,g^M},M^{\varphi,g^M}_{xx}$ depend on $g^M$ only through $\|g^M\|_{\lip}$, applying the gradient estimate \eqref{eq:SP_grad_est_7} and the Hessian estimate \eqref{eq:SP_Hess_est_2} we conclude that for all $\varepsilon$ 
\bes
\sup_{M\in\N} \sup_{\substack{0\leq t\leq T-\varepsilon \\ x\in\bbRD}}| \nabla \varphi^{T,g^M}_t(x)|+ | \nabla^2\varphi^{T,g^M}_t(x)|<+\infty.
\ees
But then, by Arzéla-Ascoli Theorem we find that for any $t<T$ there exist $\Phi\in C_{\lip}(\bbRD;\bbRD)$ such that,  along a non-relabeled subsequence we have that for any compact set $K$
\be\label{eq:reg_eff_2}
\lim_{M\rightarrow+\infty}\sup_{x\in K}|\nabla\varphi^{T,g^M}_t(x)-\Phi(x)|=0.
\ee
But then, because of \eqref{eq:reg_eff_1}, we can conclude that $\varphi^{T,g}_t\in C^1(\bbRD)$ and that $\nabla\varphi^{T,g}_t=\Phi$. In particular, $\nabla\varphi^{T,g}_t\in\lip(\bbRD;\bbRD)$ and, in view of \eqref{eq:SP_Hess_est_2}, we also have $\| \nabla\varphi^{T,g}_t\|_{\lip}\leq M_{xx,T-t}^{\varphi,g}$. The fact that $\varphi^{T,g}_s$ is a viscosity solution is a direct consequence of the fact that $\varphi^{T,g^M}$ is a viscosity solution for all $M$ since it is a classical solution, and of a well known stability property for viscosity solution under uniform convergence, see \cite[Lemma 6.2]{fleming2006controlled}. Fix now $x\in\bbRD$. Then, existence and uniqueness of a strong solution $(\bmX^{0,x,T,g}_s)_{s\in[0,T)}$ for \eqref{eq:Ham_SDE} on $[0,T)$ with initial condition $\xi\equiv x$ is easily deduced from classical results (see e.g. \cite[Thm 5.2.1]{oksendal2013stochastic}) since for any $\varepsilon>0$, the restriction of $\beta^{T,g}$ to $[0,T-\varepsilon)\times\bbRD$ grows at most linearly and is uniformly Lipschitz in the space variable. Indeed thanks to Proposition \ref{prop:SP_Hess_est} and \eqref{eq:Hess_Ham_bound},\eqref{eq:coeff_bound_2} and what we have just shown, we have
\be\label{eq:reg_eff_3}
\forall (s,x)\in[0,T-\varepsilon]\times\bbRD,\quad |\beta^{T,g}_s(x)-\beta^{T,g}_s(x')| \leq \Big(M_{xp}^{H,g}+ \frac{M^2_u}{ \omega_{M^{\varphi,g}_x}} M^{\varphi,g}_{xx,\varepsilon}\Big) |x-x'|.
\ee Next we observe that, repeating the same argument used in the proof of Theorem \ref{thm:contraction_SP}, we find that \eqref{eq:contraction_SP_3} holds true. But then, a standard calculation using Gronwall Lemma gives that 
\bes
\bbP- \text{a.s},\quad \sup_{s\in[0,T)}|\beta^{T,g}_s(\bmX^{0,x,T,g}_s)|<+\infty.
\ees
From this, it follows that
\bes
\bbP- \text{a.s},\quad \lim_{h\rightarrow 0}\int_0^{T-h}\beta^{T,g}_s(\bmX^{0,x,T,g}_s)\De s \,\,\text{exists},
\ees
from which we obtain that $\bmX^{0,x,T,g}_T$ is well-defined as the almost sure limit of $\bmX^{0,x,T,g}_{T-h}$ as $h\rightarrow0$ and that the process $(\bmX^{0,x,T,g}_s)_{s\in[0,T]}$ is a strong solution for \eqref{eq:Ham_SDE}.  Pathwise uniqueness of strong solutions is obtained in a standard way leveraging the global Lipschitzianity of $\beta^{T,g}$ on $[0,T-\varepsilon)\times\bbRD$ for all $\varepsilon>0$. We have therefore proven item (iii) under the additional assumption that the initial condition is deterministic. The extension to the general case is standard.
  \end{proof}
\subsection{Exponential stability with respect to the final condition}\label{sec:SP_stab_fin_cond}

\begin{lemma}\label{lemma:SP_stab_fin_cond}
   Let Assumption \ref{ass:SP_wellposed}-\ref{ass:SP} hold. Moreover, assume that $g,g'\in C^1_{\lip}(\bbRD)$. Then, if we define
    \be\label{eq:SP_stab_fin_cond_1}
    \begin{split}
    \kappa(r) &=\bar\kappa_b(r)-\frac{M_u^2}{\sigma^2 r}\Big(\frac{4(1+M_x^{\varphi,g})}{\omega_{M_x^{\varphi,g}}}+\frac{M^{\varphi,g}_x+M^{\varphi,g'}_x}{ \omega_{M^{\varphi,g}_x \vee M^{\varphi,g'}_x}} \,\Big)\quad \forall r>0, \\
     (f,\lambda,C)&=(\bmf(\kappa),\bm\lambda(\kappa),\bmC(\kappa)).
     \end{split}
    \ee
   then the estimate
    \begin{equation}\label{eq:SP_stab_fin_cond_4}
     \|\varphi^{T,g}_t-\varphi^{T,g'}_t\|_{f} \leq \|g-g'\|_{f}  \exp(-\lambda (T-t))
    \end{equation}
    holds uniformly on  $0\leq t\leq T$. In particular,
    \begin{equation}\label{eq:SP_stab_fin_cond_2}
     \|\varphi^{T,g}_t-\varphi^{T,g'}_t\|_{\mathrm{Lip}} \leq \|g-g'\|_{\mathrm{Lip}} C^{-1} \exp(-\lambda (T-t))
    \end{equation}
    holds uniformly on  $0\leq t\leq T$.
    
\end{lemma}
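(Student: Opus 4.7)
I will first establish \eqref{eq:SP_stab_fin_cond_4} under the additional assumption $g, g' \in C^3_{\lip}(\bbRD)$ with bounded Hessians, and then remove this assumption by mollification as in the final step of Lemma \ref{lemma:SP_grad_est}. Under these stronger assumptions, both $\varphi^{T,g}$ and $\varphi^{T,g'}$ are $C^{1,2}_p$ classical solutions of \eqref{eq:SP_HJB} by Proposition \ref{item_1:SP_opt_cond}, with Lipschitz spatial gradients by Proposition \ref{prop:SP_Hess_est}. Setting $\psi_t := \varphi^{T,g}_t - \varphi^{T,g'}_t$ and rewriting the difference of Hamiltonians via the fundamental theorem of calculus in the $p$-variable, together with the envelope identity $D_pH(x,p) = -b(x, w(x,p))$, I obtain the linear backward Cauchy problem
\[
\partial_t \psi_t + \tfrac{\sigma^2}{2} \Delta \psi_t + \gamma_t(x) \cdot \nabla\psi_t = 0, \qquad \psi_T = g-g',
\]
with the drift
\[
\gamma_t(x) := \int_0^1 b\bigl(x, w(x, \bar p_\theta(x))\bigr)\, d\theta, \qquad \bar p_\theta(x) := \theta \nabla\varphi^{T,g}_t(x) + (1-\theta)\nabla\varphi^{T,g'}_t(x).
\]
Since $\gamma$ is locally Lipschitz in $x$ with linear growth (using the bounds of Lemma \ref{lem:coeff_bound} combined with Proposition \ref{prop:SP_Hess_est}), a Feynman--Kac argument yields $\psi_t(y) = \bbE[(g-g')(Y^{t,y}_T)]$, where $Y^{t,y}$ is the strong solution of $\De Y_s = \gamma_s(Y_s)\De s + \sigma\De B_s$ starting at $y$ at time $t$.

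\textbf{Lower bound on $\bar\kappa_\gamma$.} Given $y, y' \in \bbRD$, I will construct a reflection coupling $(\bar Y, \bar Y')$ of $Y^{t,y}$ and $Y^{t,y'}$ via Proposition \ref{item_1:contraction_coup_by_ref}. The main step will be to show $\bar\kappa_\gamma \geq \kappa$ for $\kappa$ as in \eqref{eq:SP_stab_fin_cond_1}. To this end, I decompose
\[
\gamma_s(x) - \gamma_s(x') = [\beta^{T,g}_s(x) - \beta^{T,g}_s(x')] + \int_0^1 [\Delta_\theta(x) - \Delta_\theta(x')]\,d\theta,
\]
where $\Delta_\theta(y) := b(y, w(y, \bar p_\theta(y))) - \beta^{T,g}_s(y)$. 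The first bracket contributes the term $4 M_u^2(1+M^{\varphi,g}_x)/(\sigma^2 \omega_{M^{\varphi,g}_x} r)$ to $\kappa$ via Theorem \ref{thm:contraction_SP}. For the $\Delta_\theta$ term, the crucial identity $\bar p_\theta(y) - \nabla\varphi^{T,g}_s(y) = (1-\theta)(\nabla\varphi^{T,g'}_s(y) - \nabla\varphi^{T,g}_s(y))$, combined with the bounds $|D_p w(y, p)| \leq M_u/\omega_{|p|}$ and $|D_u b| \leq M_u$ from Lemma \ref{lem:coeff_bound} integrated along the segment joining $\nabla\varphi^{T,g}_s(y)$ to $\bar p_\theta(y)$ (which remains inside the ball of radius $M^{\varphi,g}_x \vee M^{\varphi,g'}_x$), yields the pointwise bound $|\Delta_\theta(y)| \leq M_u^2(1-\theta)(M^{\varphi,g}_x + M^{\varphi,g'}_x)/\omega_{M^{\varphi,g}_x \vee M^{\varphi,g'}_x}$. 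Integrating in $\theta$ produces the second correction in the definition of $\kappa$. Since $\bar\kappa_b \in K$, the corrections are $O(1/r)$ at the origin, and $\kappa(r) \to \liminf \bar\kappa_b > 0$ as $r \to \infty$, I conclude that $\kappa \in K$.

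\textbf{Conclusion and removal of smoothness.} The It\^o computation applied to $f(|\bar Y_s - \bar Y'_s|)$, together with the differential inequality of Proposition \ref{item_2:Lyapunov_funct_coup_by_ref} and the monotonicity statement of Proposition \ref{item_3:Lyapunov_funct_coup_by_ref} applied to $\bar\kappa_\gamma \geq \kappa$, exactly as in the proof of Lemma \ref{lemma:SP_grad_est}, will give $\bbE[f(|\bar Y_T - \bar Y'_T|)] \leq f(|y-y'|) \exp(-\lambda(T-t))$. Consequently,
\[
|\psi_t(y) - \psi_t(y')| \leq \|g-g'\|_f\, \bbE\bigl[f(|\bar Y_T - \bar Y'_T|)\bigr] \leq \|g-g'\|_f f(|y-y'|) \exp(-\lambda(T-t)),
\]
which is \eqref{eq:SP_stab_fin_cond_4}. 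For general $g, g' \in C^1_\lip(\bbRD)$, I will approximate via mollification by sequences $g^M, g'^M \in C^3_\lip(\bbRD)$ with bounded Hessians satisfying uniform convergence to $g, g'$ and $\|g^M - g'^M\|_f \to \|g-g'\|_f$; passing to the limit as in the closing step of Lemma \ref{lemma:SP_grad_est} transfers \eqref{eq:SP_stab_fin_cond_4} to the full generality. Finally, \eqref{eq:SP_stab_fin_cond_2} follows at once from the norm equivalence $\|\cdot\|_\lip \leq \|\cdot\|_f \leq C^{-1}\|\cdot\|_\lip$ of Proposition \ref{item_1:Lyapunov_funct_coup_by_ref}. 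The main technical obstacle I anticipate is matching the stated $\kappa$ exactly, which forces the decomposition to be taken around $\beta^{T,g}_s$ (rather than around $b(\cdot, 0)$ as in Theorem \ref{thm:contraction_SP}) and relies on the vanishing factor $1-\theta$ in $\Delta_\theta$ producing the precise constants upon integration.
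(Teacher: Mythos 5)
Your proposal is correct and follows essentially the same strategy as the paper: reduce to smooth terminal data, linearize the HJB equation around the optimal drift $\beta^{T,g}$, bound the resulting perturbation of the drift uniformly, apply reflection coupling, and remove the smoothness by approximation. The only differences are cosmetic. First, you write the linearizing drift as a secant slope, $\gamma_t(x)=\int_0^1 b\bigl(x,w(x,\bar p_\theta(x))\bigr)\,\De\theta=-\int_0^1 D_pH(x,\bar p_\theta(x))\,\De\theta$, and decompose it around $\beta^{T,g}_s$ with the remainder $\Delta_\theta$; the paper instead uses Taylor's formula with second-order remainder, getting $\tilde\beta_s=\beta^{T,g}_s-\gamma^{\mathrm{paper}}_s$ with $\gamma^{\mathrm{paper}}_s(x)=\tfrac12 D_{pp}H(x,v_s(x))\cdot\nabla\psi_s(x)$. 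Both representations produce the same $\sup$-bound $\tfrac12 M_u^2(M^{\varphi,g}_x+M^{\varphi,g'}_x)/\omega_{M^{\varphi,g}_x\vee M^{\varphi,g'}_x}$ — in your case the $1/2$ comes from $\int_0^1(1-\theta)\,\De\theta$, in the paper's from the Taylor coefficient — so the modified $\kappa$ and the resulting $(f,\lambda,C)$ are identical. Second, for removing the $C^3$ hypothesis you rely on straight mollification, which is sufficient here because Gaussian mollification does not increase $\|\cdot\|_f$ and uniformly converges, so the monotonicity of $\bmf,\bm\lambda,\bmC$ (Proposition \ref{item_3:Lyapunov_funct_coup_by_ref}) lets you run the It\^o computation with the limiting $f$ and $\lambda$; the paper's argument is equivalent but uses the $C^1$-Lipschitz approximation result of \cite{czarnecki2006approximation} together with a $(1-\varepsilon)$-rescaling to keep the Lipschitz constants under control. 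Your version is arguably cleaner since mollification already yields $\|g^M-g'^M\|_{f}\le\|g-g'\|_f$ automatically. The one place where your write-up would need to be made fully rigorous is the Feynman–Kac representation $\psi_t(y)=\bbE[(g-g')(Y^{t,y}_T)]$ — you should explicitly invoke the uniform gradient/Hessian bounds to justify the integrability of the martingale part, exactly as the paper does implicitly by appealing to the probabilistic representation of the Kolmogorov equation — but this is routine.
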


\begin{proof}
We first assume that $g,g'\in C^3_{\lip}(\bbRD)$ and define $\psi=\varphi^{T,g'}-\varphi^{T,g}$. From Taylor's formula we know that for any $(s,x)\in[t,T)\times\bbRD$ there exists 
\bes v_s(x)\in \{ \theta \nabla\varphi^{T,g}_s(x)+(1-\theta)\nabla\varphi^{T,g'}_s(x):\theta\in[0,1]\}\ees
such that
\bes
\begin{split}
&H(x,\nabla\varphi^{T,g'}_s(x))-H(x,\nabla\varphi^{T,g}_s(x))-D_pH(x,\nabla\varphi^{T,g}_s(x))\cdot\nabla\psi_s(x)\\
&= (\nabla\psi_s(x))^{\top}\cdot \frac{1}{2}D_{pp}H(x,v_s(x))\cdot\nabla\psi_s(x)\\
&:=\gamma_s(x) \cdot\nabla \psi_s(x).
\end{split}
\ees
Moreover, combining \eqref{eq:SP_grad_est_3} with \eqref{eq:Hess_Ham_bound} we obtain
\be\label{eq:SP_stab_fin_cond_3}
\sup_{\substack{x\in\bbRD \\ t\leq s\leq T}}|\gamma_s(x)|\leq \frac{M_u^2}{2 \omega_{ M^{\varphi,g}_x\vee M^{\varphi,g'}_x}} \,(M^{\varphi,g}_x+M^{\varphi,g'}_x).
\ee
As a consequence of Proposition \ref{item_1:SP_opt_cond} $\psi$ is a classical solution of
\bes
    \begin{cases}
    \partial_s \psi_s(x)+(-\gamma_s(x)+\beta^{T,g}_s(x))\cdot\nabla\psi_s(x) +\frac{\sigma^2}{2}\Delta\psi_s(x)=0, \,(s,x) \in[t,T)\times \bbRD,\\
    \psi_T(x)=(g'-g)(x), \quad x\in\bbRD.
    \end{cases}
\ees
 Interpreting the above as a Kolmogorov equation $(\partial_s +\cL_s)\psi=0$ for the Markov generator
 \bes
 \cL_s f(x)  = \frac{\sigma^2}{2}\Delta f(x) +\Big(\beta^{T,g}_s-\gamma_s\Big)(x)\cdot\nabla f(x)
 \ees
 we obtain the following probabilistic representation for $\psi_t(x)$:
 \bes
 \psi_t(x)=\bbE[(g'-g)(Y^{t,x}_T)], 
 \ees
where
 \bes
 \begin{cases} 
 \De Y^{t,x}_s= \tilde\beta_s( Y^{t,x}_s)\De s + \sigma\De B_s, \quad s\in[t,T],\\
 Y^{t,x}_t=x.
 \end{cases}
 \ees
and
\bes
\tilde\beta_s(x)=\beta^{T,g}_s(x)-\gamma_s(x) \quad \forall x\in\bbRD, t\leq s \leq T.
\ees
Fix now $r>0,t\leq s \leq T$. We have 
 \be\label{eq:SP_stab_fin_cond_5}
 \begin{split}
\kappa_{\tilde\beta_s}(r) &\stackrel{\eqref{eq:SP_stab_fin_cond_3}}{\geq} \kappa_{\beta^{T,g}_s}(r)-\frac{ M_u^2}{ \sigma^2\omega_{ M^{\varphi,g}_x\vee M^{\varphi,g'}_x} \, r}   \\
&\stackrel{\eqref{eq:contraction_SP_3}}{\geq} \bar\kappa_b(r)-\frac{M_u^2}{\sigma^2 r}\Big(\frac{4(1+M_x^{\varphi,g})}{\omega_{M_x^{\varphi,g}}}+\frac{M^{\varphi,g}_x+M^{\varphi,g'}_x}{ \omega_{M^{\varphi,g}_x \vee M^{\varphi,g'}_x}} \,\Big)=\kappa(r).
\end{split}
 \ee
 We can now conclude thanks to Proposition \ref{item_1:contraction_coup_by_ref}. Indeed, for all $x,x'$ we have
  \bes
  \begin{split}
  \psi_t(x)-\psi_t(x')&=\bbE\big[(g'-g)(Y^{t,x}_T)\big]-\bbE\big[(g'-g)(Y^{t,x'}_T)\big]\\ &\leq \|g'-g\|_{f} W_f(\mathrm{Law}(Y^{t,x}_T),\mathrm{Law}(Y^{t,x'}_T))\\ &\leq \|g'-g\|_{f} |x-x'| e^{-\lambda(T-t)}
  \end{split}
  \ees
  with $\lambda,f$ as in \eqref{eq:SP_stab_fin_cond_1}. The bound \eqref{eq:SP_stab_fin_cond_4} is now proven and \eqref{eq:SP_stab_fin_cond_2} follows immediately from \eqref{eq:Lyapunov_funct_coup_by_ref_2}. We now proceed to remove the assumption that $g,g'\in C^3(\bbRD)$. To this aim, recall that for any $g,g'\in C^1_{\lip}(\bbRD)$ (see e.g. \cite[Prop. A item (b)]{czarnecki2006approximation}) there exist sequences $(g^M)_{M\in\N},(g'^M)_{M\in\N}\subseteq C^3_{\lip}(\bbRD)$ with the property that 
  \be\label{eq:SP_stab_fin_cond_9}
    \lim_{M\rightarrow+\infty} \sup_{x\in\bbRD}| g^M-g|(x)+ |g'^M-g'|(x)+ | \nabla g^M-\nabla g|(x)+ |\nabla g'^M-\nabla g'|(x)=0.
  \ee
  Fix now $\varepsilon>0$. Note that
  \be\label{eq:SP_stab_fin_cond_7} (1-\varepsilon)\|g^M \|_{\mathrm{Lip}}\leq \|g\|_{\mathrm{Lip}},   \quad (1-\varepsilon)\|g'^M \|_{\mathrm{Lip}}\leq \|g'\|_{\mathrm{Lip}} 
  \ee
  for all $M$ large enough.
Define now $\kappa^{M,\varepsilon}$ by replacing $g,g'$ in \eqref{eq:SP_stab_fin_cond_1} with $(1-\varepsilon)g^M$ and $(1-\varepsilon)g'^M$ respectively. Because of \eqref{eq:SP_stab_fin_cond_7} we have that $\kappa^{M,\varepsilon}\geq \kappa$ for all $M,\varepsilon$, where $\kappa$ is as in \eqref{eq:SP_stab_fin_cond_1}. But then, leveraging  the monotonicity properties of Proposition \ref{item_3:Lyapunov_funct_coup_by_ref} we find that for all $M,\varepsilon$
\begin{equation}\label{eq:SP_stab_fin_cond_8}
     \|\varphi^{T,(1-\varepsilon)g^M}_t-\varphi^{T,(1-\varepsilon)g'^M}_t\|_{f} \leq (1-\varepsilon)\|g^M-g'^M\|_{f}  \exp(-\lambda (T-t)).
\end{equation}
where $f=\bmf(\kappa),\lambda=\bm\lambda(\kappa)$. Letting $M\rightarrow+\infty$ in the above and using \eqref{eq:SP_stab_fin_cond_9} which in particular implies that $\|g^M-g\|_{f}+\|g'^M-g'\|_{f}$ converges to $0$ as well as the pointwise convergence  of $\varphi^{T,(1-\varepsilon)g^M}_t$ (resp. $\varphi^{T,(1-\varepsilon)g'^M}_t$) to $\varphi^{T,(1-\varepsilon)g}_t$ (resp.$\varphi^{T,(1-\varepsilon)g'}_t$), we find 
 \bes
 \|\varphi^{T,(1-\varepsilon)g}_t-\varphi^{T,(1-\varepsilon)g'}_t\|_{f} \leq (1-\varepsilon)\|g-g'\|_{f}  \exp(-\lambda (T-t)).
\ees
If we now let $\varepsilon\rightarrow0$ and use the fact that $\varphi^{T,(1-\varepsilon)g}_t$ and $\varphi^{T,(1-\varepsilon)g'}_t$ converge pointwise to $\varphi^{T,g}_t$ and $\varphi^{T,g'}_t$, the desired conclusion \eqref{eq:SP_stab_fin_cond_4} follows.

\end{proof}

\begin{lemma}\label{lem:sticky_coup}
Let Assumption \ref{ass:SP_wellposed} -\ref{ass:SP} hold. Moreover, assume that $g,g'\in C^1_{\lip}(\bbRD)$. Then the estimate
\be\label{eq:sticky_coup_1}
W_1(\mathrm{Law}(\bmX^{0,x,T,g}_s),\mathrm{Law}(\bmX^{0,x,T,g'}_s))  \leq \overleftarrow{C}\|g-g'\|_{\mathrm{Lip}} \exp(-\overleftarrow{\lambda}(T-s)).
\ee
holds uniformly on $x\in\bbRD$ and $0\leq s\leq T$ where
\be\label{eq:sticky_coup_4}
\overleftarrow{\lambda}=\bm\lambda(\kappa), \quad \overleftarrow{C}= \frac{M^2_{u}}{\lambda\,\bmC(\kappa)\,\omega_{\,M_x^{\varphi,g}\vee M^{\varphi,g'}_x}},
\ee
and $\kappa$ is given by \eqref{eq:SP_stab_fin_cond_1}.
\end{lemma}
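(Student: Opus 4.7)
The plan is to apply the sticky coupling of Theorem \ref{thm:sticky_coupl} to the two optimal SDEs. By Proposition \ref{item_3:reg_eff}, under the assumption $g,g'\in C^1_{\lip}(\bbRD)$, both $(\bmX^{0,x,T,g}_s)_{s\in[0,T]}$ and $(\bmX^{0,x,T,g'}_s)_{s\in[0,T]}$ are well-defined strong solutions to \eqref{eq:Ham_SDE} with the optimal drifts $\beta^{T,g}_s,\beta^{T,g'}_s$ from \eqref{eq:contraction_SP_2}, and they share the same initial condition so that the bounding process will start at $r_0=0$. The two ingredients to feed into Theorem \ref{thm:sticky_coupl} are a uniform lower bound $\bar\kappa_{\beta^{T,g}}\geq\kappa_0$ and a pointwise bound $|\beta^{T,g}_s-\beta^{T,g'}_s|\leq M_s$.

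For the lower bound on $\bar\kappa_{\beta^{T,g}}$, I would invoke Theorem \ref{thm:contraction_SP} which gives $\bar\kappa_{\beta^{T,g}}(r)\geq\bar\kappa_b(r)-\frac{4M_u^2(1+M_x^{\varphi,g})}{\sigma^2\omega_{M_x^{\varphi,g}}r}$ and observe that this is in turn bounded below by the $\kappa$ of \eqref{eq:SP_stab_fin_cond_1}, since the latter has an additional non-negative term subtracted. The Lipschitz+constant-at-infinity regularity required of $\kappa_0$ in Theorem \ref{thm:sticky_coupl} is handled by approximating $\kappa$ from below by admissible $\kappa_0^n$ and appealing to the monotonicity of $\bmC,\bm\lambda,\bmf$ proven in Proposition \ref{item_3:Lyapunov_funct_coup_by_ref} to pass to the limit.

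For the drift difference, I would write $\beta^{T,g}_s(x)-\beta^{T,g'}_s(x)=D_pH(x,\nabla\varphi^{T,g'}_s(x))-D_pH(x,\nabla\varphi^{T,g}_s(x))$, and, via the mean-value theorem combined with the Hessian bound \eqref{eq:Hess_Ham_bound} on $H$ and the global gradient bound \eqref{eq:SP_grad_est_7} valid for both $\varphi^{T,g}$ and $\varphi^{T,g'}$, obtain
\begin{equation*}
|\beta^{T,g}_s(x)-\beta^{T,g'}_s(x)|\leq\frac{M_u^2}{\omega_{M_x^{\varphi,g}\vee M_x^{\varphi,g'}}}\,\|\nabla\varphi^{T,g}_s-\nabla\varphi^{T,g'}_s\|_{\infty}.
\end{equation*}
Since $\varphi^{T,g}_s,\varphi^{T,g'}_s\in C^1_{\lip}(\bbRD)$ by Proposition \ref{cor:reg_eff}, the gradient supremum norm is dominated by the Lipschitz norm of the difference, which Lemma \ref{lemma:SP_stab_fin_cond} bounds by $\bmC(\kappa)^{-1}\|g-g'\|_{\lip}\,e^{-\bm\lambda(\kappa)(T-s)}$. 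This is my choice of $M_s$.

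With these inputs the sticky coupling yields $|\bar\bmX^g_s-\bar\bmX^{g'}_s|\leq r_s$ where $r_s$ solves the one-dimensional SDE with drift $M_s$ and a contractive term governed by $\kappa_0$, and $r_0=0$. I then apply It\^o to $f(r_s)$ with $f=\bmf(\kappa)$, use $f'\leq 1$ together with the differential inequality \eqref{eq:Eberle_funct_2} to obtain $\frac{d}{ds}\bbE[f(r_s)]\leq M_s-\overleftarrow\lambda\,\bbE[f(r_s)]$, integrate (using $\bbE[f(r_0)]=0$) and compute $\int_0^s e^{-\overleftarrow\lambda(T-u)}e^{-\overleftarrow\lambda(s-u)}\De u\leq\frac{1}{2\overleftarrow\lambda}e^{-\overleftarrow\lambda(T-s)}$. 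Converting back via $\bbC(\kappa)r\leq f(r)$ and using $W_1\leq\bbE[|\bar\bmX^g_s-\bar\bmX^{g'}_s|]\leq\bbE[r_s]$ gives \eqref{eq:sticky_coup_1}. The main technical hurdle is the first step, verifying the applicability of Theorem \ref{thm:sticky_coupl} with a function $\kappa_0$ close enough to the non-Lipschitz $\kappa$ of \eqref{eq:SP_stab_fin_cond_1} to retain the rate $\overleftarrow\lambda=\bm\lambda(\kappa)$; the monotone approximation above, together with the lower semicontinuity of $\bmC,\bm\lambda$ under such approximation, resolves it.
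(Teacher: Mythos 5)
Your overall plan coincides with the paper's: apply the sticky coupling of Theorem \ref{thm:sticky_coupl} to the two optimal diffusions, control the drift difference via \eqref{eq:Hess_Ham_bound} and Lemma \ref{lemma:SP_stab_fin_cond}, and then use It\^o and the auxiliary function $\bmf$ to convert the dominated real-valued process into a bound for $W_1$. However, there is a genuine gap in the step where you fit the hypotheses of Theorem \ref{thm:sticky_coupl}. You propose to ``approximate $\kappa$ from below by admissible $\kappa_0^n$'', where $\kappa$ is given by \eqref{eq:SP_stab_fin_cond_1}. But this $\kappa$ (as well as the $\kappa$ of \eqref{eq:contraction_SP_3}) contains a term proportional to $-1/r$ and therefore diverges to $-\infty$ as $r\to 0$; any function lying \emph{below} it inherits this divergence and can never be globally Lipschitz, which is what Theorem \ref{thm:sticky_coupl} requires of $\kappa_0$. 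So a monotone approximation from below simply does not exist. What is actually needed is a two-sided truncation: a cap at infinity (where one may go below $\kappa$) \emph{and} a constant floor near $r=0$ (where one must go \emph{above} $\kappa$). For such a truncated $\kappa_0$ to remain a lower bound of $\bar\kappa_{\beta^{T,g}}$, one must know that $\bar\kappa_{\beta^{T,g}}$ is itself bounded below by a constant near $r=0$, i.e.\ that $\beta^{T,g}_s$ is uniformly Lipschitz in $x$ on $[0,T-\varepsilon]$. This is exactly where the Hessian estimate of Proposition \ref{prop:SP_Hess_est} (via Proposition \ref{cor:reg_eff} and \eqref{eq:coeff_bound_2}, \eqref{eq:Hess_Ham_bound}) enters: the paper constructs $\kappa_{0,\varepsilon,L}$ as a max--min truncation of the $\kappa_0$ from \eqref{eq:contraction_SP_3} with floor $-2\sigma^{-2}(M_{xp}^{H,g}+M^{\varphi,g}_{xx,\varepsilon}M_u^2\omega^{-1}_{M_x^{\varphi,g}})$ and verifies $\inf_{s\le T-\varepsilon}\kappa_{\beta^{T,g}_s}\ge\kappa_{0,\varepsilon,L}$ using precisely that Hessian bound, before letting $\varepsilon\to 0$, $L\to\infty$ and using the monotonicity of $\bmC$. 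Your proposal never invokes the Hessian estimate, so this step is not closed.

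A secondary, smaller imprecision: you then apply It\^o to $f(r_s)$ with $f=\bmf(\kappa)$ while $r_s$ is governed by $\kappa_0^n\le\kappa$; the differential inequality \eqref{eq:Eberle_funct_2} for $\bmf(\kappa)$ uses $\kappa$ itself, and since the drift coefficient of $r_s$ involves the smaller $\kappa_0^n$, the sign goes the wrong way and \eqref{eq:Eberle_funct_2} does not directly apply. One must instead work with $f_n=\bmf(\kappa_0^n)$ and pass to the limit, which is what the paper does. Your idea of keeping the $-\lambda\bbE[f(r_s)]$ term and Gr\"onwalling (rather than simply discarding it and integrating $\int_0^s M_\theta\De\theta$ as the paper does) would give a marginally sharper multiplicative constant, but it must be carried out with $f_n$ and $\lambda_n=\bm\lambda(\kappa_{0,\varepsilon,L})$ before the limiting procedure. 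Once you fix the construction of $\kappa_0$ via the Hessian bound and the floor-truncation, and replace $\bmf(\kappa)$ by $\bmf(\kappa_{0,\varepsilon,L})$ in the It\^o step, the argument closes in the same way as the paper's.
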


\begin{proof}
From Proposition \ref{prop:SP_opt_cond} we know that $(\bmX^{0,x,T,g}_s)_{s\in[0,T]}$ is a solution of
\eqref{eq:SDE_1} for the drift field $\beta^{T,g}_{\cdot}(\cdot)$ Likewise, $(\bmX^{0,x,T,g'}_s)_{s\in[0,T]}$ is a solution of \eqref{eq:SDE_1} for the drift field $\beta^{T,g'}_{\cdot}(\cdot)$. We have for any $0\leq s \leq T,x\in\bbRD$
\be\label{eq:sticky_coup_3}
    \begin{split}
         | \beta^{T,g}_s(x)-\beta^{T,g'}_s(x)|&\stackrel{\eqref{eq:Hess_Ham_bound}}{\leq} \frac{M^2_{u}}{\omega_{M_x^{\varphi,g}\vee M^{\varphi,g'}_x}}  |\nabla\varphi^{T,g}_{s}(x)-\nabla\varphi^{T,g'}_{s}(x)|\\
         &\stackrel{\eqref{eq:SP_stab_fin_cond_2}}{\leq} \,\frac{M^2_{u}}{\bmC(\kappa)\,\omega_{M_x^{\varphi,g}\vee M^{\varphi,g'}_x}} \, \|g-g'\|_{\mathrm{Lip}}\exp(-\bm\lambda(\kappa)(T-s)):=M_s,
    \end{split}
\ee

where $\kappa$ is defined by \eqref{eq:SP_stab_fin_cond_1}. Consider $\kappa_0$ as given by \eqref{eq:contraction_SP_3} and define for any $\varepsilon,L>0$ the functions
\bes
\begin{split}
\kappa_{0,L}(r)=\min\{\kappa_0(r), \inf_{[L,+\infty]}\kappa_0\},\\
\kappa_{0,\varepsilon,L}:=\min\{\max\{ \kappa_0(r),-2\sigma^{-2}(M_{xp}^{H,g}-M^{\varphi,g}_{xx,\varepsilon}\,M_u^2\,\omega^{-1}_{M_x^{\varphi,g}})\}, \inf_{[L,+\infty]}\kappa_0 \}.
\end{split}
\ees
From Corollary \ref{cor:relax_boundary} and the bounds \eqref{eq:coeff_bound_2}-\eqref{eq:coeff_bound_3} and \eqref{eq:SP_Hess_est_2} we deduce that
\bes
 \inf_{s\in[0,T-\varepsilon]}\kappa_{\beta^{T,g}_s} \geq \kappa_{0,\varepsilon,L} \quad\forall \varepsilon,L>0.
\ees
Moreover, $\kappa_{0,\varepsilon,L}$ is globally Lipschitz and constantly equal to a positive constant outside an interval if $L$ is large enough.  We can now invoke  Theorem \ref{thm:sticky_coupl} to obtain existence of a coupling  $(\bar\bmX^{g}_s,\bar{\bmX}^{g'}_s)_{s\in[0,T]}$ of $\bmX^{0,x,T,g}_{\cdot}$ and $\tilde{\bmX}^{0,x,T,g'}_{\cdot}$ and a process $(r_s)_{s\in[0,T-\varepsilon]}$ such that
\be\label{eq:sticky_coup_2}
\text{almost surely} \quad |\bar\bmX^{g}_s-\bar{\bmX}^{g'}_s|\leq r_s \quad \forall s\in[0,T-\varepsilon]
\ee
and $(r_s)_{s\in[0,T-\varepsilon]}$ is a solution to the SDE
\bes
\De r_s= \big(M_{s} -\kappa_{0,\varepsilon,L}(r_s)r_s)\De s + 2\sigma\mathbf{1}_{\{r_s>0\}}\De W_s, \quad r_0=0,
\ees
where $W_s$ is a one-dimensional Brownian motion. Using Itô-Tanaka formula as in \cite[Lemma 25]{eberle2016reflection} we find that, setting $ f_{0,\varepsilon,L}=\bmf(\kappa_{0,\varepsilon,L})$,
\bes
\begin{split}
\De f_{0,\varepsilon,L}(r_s)&=  f'_{0,\varepsilon,L}(r_s)\big(M_s-\kappa_{0,\varepsilon,L}(r_s) r_s )\De s + 2\sigma^2 f''_{0,\varepsilon,L}(r_s)\mathbf{1}_{\{r_s>0\}} + 2\sigma  f'_{0,\varepsilon,L}(r_s)\mathbf{1}_{\{r_s>0\}}\De W_s\\
&\stackrel{\text{Prop.}\ref{item_3:Lyapunov_funct_coup_by_ref}}{\leq}  f'_{0,\varepsilon,L}(r_s)M_s\De s + 2\sigma  f'_{0,\varepsilon,L}(r_s)\mathbf{1}_{\{r_s>0\}}\De W_s
\end{split}
\ees
But then, taking expectations we obtain that for all $s\leq T-\varepsilon$:
\bes
\begin{split}
\bbE[f_{0,\varepsilon,L}(r_s)] &\leq \int_0^s M_{\theta}\bbE[ f'_{0,\varepsilon,L}(r_\theta)]\De \theta\\ &\stackrel{\eqref{eq:Lyapunov_funct_coup_by_ref_2}}{\leq} \int_0^s M_\theta \De \theta \\
&\stackrel{\eqref{eq:sticky_coup_3}}{\leq} \frac{M^2_{u}}{\bmC(\kappa)\,\omega_{M_x^{\varphi,g}\vee M^{\varphi,g'}_x}}\|g-g'\|_{\mathrm{Lip}}\bm\lambda^{-1}(\kappa)\exp(-\bm\lambda(\kappa)(T-s))
\end{split}
\ees
At this point, we can use \eqref{eq:sticky_coup_2} and \eqref{eq:Lyapunov_funct_coup_by_ref_2} to conclude that for any $\varepsilon,L$ and $s\leq T-\varepsilon$:
\bes
W_1(\mathrm{Law}(\bmX^{0,x,T,g}_s),\mathrm{Law}(\bmX^{0,x,T,g'}_s))\leq  \frac{M^2_{u}\|g-g'\|_{\mathrm{Lip}}\exp(-\bm\lambda(\kappa)(T-s))}{\bm\lambda(\kappa)\bmC(\kappa)\bmC(\kappa_{0,\varepsilon,L})\,\omega_{M_x^{\varphi,g}\vee M^{\varphi,g'}_x}}.
\ees
In order to reach the conclusion, it remains to show that $\sup_{L,\varepsilon>0}\bmC(\kappa_{0,\varepsilon,L})\geq \bmC(\kappa)$. To do so, we begin observing that the the monotonicity of $\bmC$ implies $\bmC(\kappa_{0,\varepsilon,L})\geq \bmC(\kappa_{0,L})$ for any $L,\varepsilon>0$. Finally, by construction $\lim_{L\rightarrow+\infty}\bmC(\kappa_{0,L})=\bmC(\kappa_0)\geq \bmC(\kappa)$, where to obtain the last inequality we used again the monotonicity of the map $\bmC$. 
\end{proof}
We now are in position to extend the scope of Proposition \ref{prop:SP_opt_cond}.
\begin{prop}\label{prop:policy_opt}
Let Assumption \ref{ass:SP_wellposed}-\ref{ass:SP} hold and $g\in C^1_{\lip}(\bbRD)$. Then the map 
\bes
(s,x)\mapsto w(x,\nabla\varphi^{T,g}_s(x))
\ees
is an optimal Markov policy in the sense of Proposition \ref{item_4:SP_opt_cond}.
\end{prop}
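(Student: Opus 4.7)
The strategy is to approximate $g \in C^1_\lip(\bbRD)$ by smooth $g^M \in C^3_\lip(\bbRD)$ satisfying $\|g - g^M\|_\infty + \|\nabla g - \nabla g^M\|_\infty \to 0$ (which in particular forces $\|g-g^M\|_\lip \to 0$) with $\|g^M\|_\lip$ uniformly bounded (standard mollification as already used in the proof of Lemma \ref{lemma:SP_stab_fin_cond}), apply Proposition \ref{item_4:SP_opt_cond} to each $g^M$, and pass to the limit. The candidate Markov policy $\bmu^{0,x,T,g}_s := w(\bmX^{0,x,T,g}_s, \nabla\varphi^{T,g}_s(\bmX^{0,x,T,g}_s))$ is well defined: Proposition \ref{cor:reg_eff} gives $\varphi^{T,g}_s \in C^1_\lip(\bbRD)$ with a uniform Lipschitz bound on $\nabla\varphi^{T,g}_s$ away from $T$, Proposition \ref{item_3:reg_eff} yields a pathwise unique strong solution $\bmX^{0,x,T,g}$ on $[0,T]$, and \eqref{eq:control_bound_1} ensures the moment requirement for admissibility.

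For any admissible $u$, Proposition \ref{item_4:SP_opt_cond} applied to $g^M$ gives $\varphi^{T,g^M}_0(x) \leq J^{T,g^M}_{0,x}(u)$; combining the pointwise convergence $\varphi^{T,g^M}_0(x) \to \varphi^{T,g}_0(x)$ (from Lemma \ref{lemma:SP_stab_fin_cond}) with $\|g - g^M\|_\infty \to 0$ yields $\varphi^{T,g}_0(x) \leq J^{T,g}_{0,x}(u)$, which just reconfirms that $\varphi^{T,g}_0(x)$ is the infimum. The substantive task is to show $J^{T,g}_{0,x}(\bmu^{0,x,T,g}) \leq \varphi^{T,g}_0(x)$. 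For this I would couple $\bmX^{0,x,T,g^M}$ and $\bmX^{0,x,T,g}$ on a common probability space by driving both SDEs with the same Brownian motion. The Lipschitz constant of $\beta^{T,g^M}_s$ on $[0, T-\varepsilon]\times \bbRD$ obtained via \eqref{eq:reg_eff_3} and \eqref{eq:SP_Hess_est_2} is uniform in $M$ (the constants depend on $g^M$ only through $\|g^M\|_\lip$), and $\nabla\varphi^{T,g^M}_s \to \nabla\varphi^{T,g}_s$ locally uniformly (by Arzel\`a--Ascoli plus uniqueness of the limit, exactly as in the proof of Proposition \ref{cor:reg_eff}); Gronwall's lemma then delivers $\bbE\big[\sup_{s \leq T-\varepsilon} |\bmX^{0,x,T,g^M}_s - \bmX^{0,x,T,g}_s|\big] \to 0$. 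Using joint continuity of $w$, the uniform control bound \eqref{eq:control_bound_1}, polynomial growth of $F$, and uniform-in-$M$ moment estimates on $\bmX^{0,x,T,g^M}$, dominated convergence yields convergence of the running cost on $[0, T-\varepsilon]$; the tail on $[T-\varepsilon, T]$ contributes $O(\varepsilon)$ uniformly in $M$; and the terminal cost converges via $|\bbE[g(\bmX^{0,x,T,g^M}_T) - g(\bmX^{0,x,T,g}_T)]| \leq \|g\|_\lip \, W_1(\mathrm{Law}(\bmX^{0,x,T,g^M}_T), \mathrm{Law}(\bmX^{0,x,T,g}_T))$, the right-hand side being driven to zero by the sticky-coupling estimate of Lemma \ref{lem:sticky_coup}, which applies up to $s = T$ with constants uniform in $M$. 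Sending $M \to \infty$ and then $\varepsilon \to 0$, we obtain $J^{T,g^M}_{0,x}(\bmu^{0,x,T,g^M}) \to J^{T,g}_{0,x}(\bmu^{0,x,T,g})$; since the left-hand side equals $\varphi^{T,g^M}_0(x)$ and converges to $\varphi^{T,g}_0(x)$, the equality $J^{T,g}_{0,x}(\bmu^{0,x,T,g}) = \varphi^{T,g}_0(x)$ follows.

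The main obstacle is the terminal boundary layer $[T-\varepsilon, T]$, where the Hessian bound \eqref{eq:SP_Hess_est_2} may blow up and synchronous (shared-Brownian-motion) coupling loses its effectiveness. The resolution is a two-layer argument: the running cost is split into a convergent piece on $[0,T-\varepsilon]$ (handled by synchronous coupling) and an $O(\varepsilon)$ tail, while convergence of the terminal cost is supplied by the sticky coupling of Lemma \ref{lem:sticky_coup}, whose constants remain uniform in $M$ precisely because they depend on $g^M$ only through $\|g^M\|_\lip$. This is enough to pass to the limit in the Lipschitz terminal datum all the way up to time $T$.
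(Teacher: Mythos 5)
Your proposal follows essentially the same approach as the paper's proof: the same approximating sequence $g^M$, the same synchronous coupling and Gr\"onwall argument on $[0,T-\varepsilon]$, the same appeal to the sticky-coupling estimate of Lemma \ref{lem:sticky_coup} for the terminal cost, and the same $O(\varepsilon)$ bound on the boundary layer $[T-\varepsilon,T]$. The only small deviation is that you obtain $\nabla\varphi^{T,g^M}\to\nabla\varphi^{T,g}$ locally uniformly via Arzel\`a--Ascoli, whereas the paper reads off the stronger global uniform convergence directly from the Lipschitz sensitivity estimate of Lemma \ref{lemma:SP_stab_fin_cond}, which makes the Gr\"onwall step slightly cleaner.
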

The proof of this proposition cannot be done through the classical verification argument since we do not know that $\varphi^{T,g}$ is twice continuously differentiable. However, we know that its gradient is Lipschitz and this will be enough to conclude. We defer the proof to the appendix section.

\begin{cor}\label{cor:relax_boundary}
Let Assumption \ref{ass:SP_wellposed}-\ref{ass:SP} hold and $g\in C^1_{\lip}(\bbRD)$. Then, the conclusion \eqref{eq:contraction_SP_3}-\eqref{eq:contraction_SP_1_b} of Theorem \ref{thm:contraction_SP} hold.
\end{cor}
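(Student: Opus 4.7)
My plan is to mimic the proof of Theorem \ref{thm:contraction_SP} verbatim, relying on Proposition \ref{cor:reg_eff} to supply the regularity that the stronger assumption $g\in C^3_\lip(\bbRD)$ afforded previously. By Lemma \ref{lemma:SP_grad_est}, the bound $\|\varphi^{T,g}_s\|_\lip\leq M_x^{\varphi,g}$ already holds for every $g\in\lip(\bbRD)$; by Proposition \ref{cor:reg_eff}(i), one also obtains $\varphi^{T,g}_s\in C^1_\lip(\bbRD)$ for every $s<T$, so that the optimal drift $\beta^{T,g}_s(x)=b(x,w(x,\nabla\varphi^{T,g}_s(x)))$ is well-defined on $[0,T)\times\bbRD$. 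Moreover, from \eqref{eq:reg_eff_3}, this drift is globally Lipschitz in $x$ uniformly on $[0,T-\varepsilon]\times\bbRD$ for every $\varepsilon>0$, so in particular it is locally Lipschitz continuous there.

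The first claim \eqref{eq:contraction_SP_3}, namely $\kappa_{\beta^{T,g}_s}(r)\geq \kappa(r)$ for all $r>0$, will follow verbatim from the algebraic computation carried out in the proof of Theorem \ref{thm:contraction_SP}: the only inputs used there are the uniform Lipschitz bound $M_x^{\varphi,g}$ on $\nabla\varphi^{T,g}$ and the estimate \eqref{eq:drift_lipschitz_1} of Lemma \ref{lem:coeff_bound}, both of which remain available here with no change. Since $\bar\kappa_b\in K$ by assumption, the resulting $\kappa$ also belongs to $K$.

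To deduce \eqref{eq:contraction_SP_1_a}-\eqref{eq:contraction_SP_1_b}, I will invoke Proposition \ref{prop:contraction_coup_by_ref} on the interval $[0,T-\varepsilon]$ for an arbitrary $\varepsilon\in(0,T)$. The hypotheses are met: $\beta^{T,g}$ is locally Lipschitz on $[0,T-\varepsilon]\times\bbRD$, $\bar\kappa_{\beta^{T,g}}\geq\kappa\in K$ by the previous step, and $(\bmX^{t,\xi,T,g}_s)_{s\in[t,T]}$ is the unique strong solution of \eqref{eq:Ham_SDE} by Proposition \ref{cor:reg_eff}(iii) combined with Proposition \ref{prop:policy_opt}. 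This produces \eqref{eq:contraction_SP_1_a}-\eqref{eq:contraction_SP_1_b} for all $0\leq t\leq s\leq T-\varepsilon$. The main, mild technical point will be to promote the bound up to $s=T$. My plan is to pass to the limit $\varepsilon\to 0$, exploiting two facts: first, the sample paths of $\bmX^{t,\xi,T,g}$ are almost surely continuous up to $T$ by Proposition \ref{cor:reg_eff}(iii); second, thanks to the uniform bound \eqref{eq:control_bound_1} on $w(\cdot,\nabla\varphi^{T,g}_\cdot(\cdot))$, the drift $\beta^{T,g}$ grows at most linearly in $x$ with constants uniform in $s\in[0,T)$. A standard Gronwall argument then yields a uniform-in-$s$ moment bound on $\bmX^{t,\xi,T,g}_s$, which together with almost sure continuity upgrades the convergence $\bmX^{t,\xi,T,g}_{T-\varepsilon}\to \bmX^{t,\xi,T,g}_T$ to convergence in $W_1$ (and hence in $W_f$). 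Letting $\varepsilon\to 0$ in the contraction inequalities therefore closes the argument.
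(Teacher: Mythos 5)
Your proposal is correct, but it takes a genuinely different route from the paper. The paper's proof proceeds by approximation in $g$: it picks a sequence $g^M\in C^3_{\lip}(\bbRD)$ with $\|g^M\|_{\lip}\leq\|g\|_{\lip}$ and $g^M\to g$, invokes Theorem \ref{thm:contraction_SP} for each $g^M$ (with constants at least as good as the target ones, by the monotonicity of $\bmC$ and $\bm\lambda$ from Proposition \ref{item_3:Lyapunov_funct_coup_by_ref}), and passes to the limit $M\to\infty$ using the sticky-coupling sensitivity bound of Lemma \ref{lem:sticky_coup} to control $W_1(\mathrm{Law}(\bmX^{0,x,T,g^M}_s),\mathrm{Law}(\bmX^{0,x,T,g}_s))$. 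You instead rerun the proof of Theorem \ref{thm:contraction_SP} directly at $g$, feeding it the regularity supplied by Proposition \ref{cor:reg_eff}: the $\kappa$-comparison \eqref{eq:contraction_SP_3} only needs \eqref{eq:drift_lipschitz_1} and the gradient bound \eqref{eq:SP_grad_est_7}, both already available for $g\in\lip(\bbRD)$; then you apply Proposition \ref{prop:contraction_coup_by_ref} on $[0,T-\varepsilon]$ and upgrade to $s=T$ by a limiting argument. The $\varepsilon\to 0$ step is the genuine extra work your route requires, and you identify it correctly: for $g$ only $C^1_{\lip}$, the drift $\beta^{T,g}$ need not be locally Lipschitz at $s=T$, so Proposition \ref{prop:contraction_coup_by_ref} as stated cannot be invoked on the closed interval $[0,T]$. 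Your fix — a.s.\ path continuity up to $T$ from Proposition \ref{cor:reg_eff}(iii) plus uniform moment bounds to get $W_1$-convergence of $\bmX^{t,\xi,T,g}_{T-\varepsilon}$ to $\bmX^{t,\xi,T,g}_T$, combined with lower semicontinuity of $W_f$ — is sound. What each approach buys: yours is more self-contained, bypassing Lemma \ref{lem:sticky_coup} entirely, at the cost of handling the terminal-time regularity loss explicitly. The paper's approach is shorter on the page, but it quietly requires the approximation sequence to converge in Lipschitz norm (not just uniformly) so that $\|g^M-g\|_{\lip}\to 0$ makes Lemma \ref{lem:sticky_coup} effective — a refinement the paper spells out in Proposition \ref{prop:policy_opt} but elides here. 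One small note: your citation of Proposition \ref{prop:policy_opt} for identifying $\bmX^{t,\xi,T,g}$ is unnecessary; Proposition \ref{cor:reg_eff}(iii) already defines that process as the unique strong solution, and optimality plays no role in the contraction argument.
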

\begin{proof}
Consider an approximating $g^M\subseteq C^3_{\lip}(\bbRD)$ such that 
\bes 
\lim_{M\rightarrow +\infty} \sup_{x\in\bbRD}|g^M-g|(x)=0,\quad  \|g^M \|_{\mathrm{Lip}}\leq \|g\|_{\mathrm{Lip}} \quad \forall M.
\ees
Since $\|g^M \|_{\mathrm{Lip}}\leq \|g\|_{\mathrm{Lip}}$, then \eqref{eq:contraction_SP_1_b} holds for any $M$  with constants $\overrightarrow{C}_M,\overrightarrow{\lambda}_M$ that are better than those needed for the conclusion by monotonicity of $\bmC(\cdot)$ and $\bm\lambda(\cdot)$. Using this observation, letting $M\rightarrow+\infty$, and leveraging the stability estimate of Lemma \ref{lem:sticky_coup} to obtain that for any $0\leq s \leq T$ and $\xi$ with $\mathrm{Law}(\xi)\in\cP_1(\bbRD)$
\bes
\begin{split}
\lim_{M\rightarrow+\infty} W_1(\mathrm{Law}(\bmX^{0,x,T,g}_s),\mathrm{Law}(\bmX^{0,x,T,g^M}_s))= 0,
\end{split}
\ees
we obtain the desired conclusion.
\end{proof}

\subsection{The ergodic problem}

In the following Lemma we appeal at the notion of stationary solution to equation \eqref{eq:SP_HJB}. By this, we mean a pair $(\alpha,\varphi)$ such that $\varphi(0)=0$ and $(t,x)\mapsto\varphi(x)+\alpha(T-t)$ is a viscosity solution of \eqref{eq:SP_HJB} for the boundary condition $g=\varphi$. For a definition of viscosity solutions, we refer to \cite[Def 5.1]{yong1999stochastic}.

\begin{lemma}\label{lem:erg_HJB}
Assume that Assumption \ref{ass:SP_wellposed}-\ref{ass:SP} hold. Then there exist a unique pair $(\alpha^{\infty},\varphi^{\infty})$ in $\bbR \times C^1_{\lip}(\bbRD)$ such that $(\alpha^{\infty},\varphi^{\infty})$ is a stationary viscosity solution to \eqref{eq:SP_HJB} for all $T>0$. Moreover, $\|\nabla\varphi^{\infty}\|_{\mathrm{Lip}}<+\infty$ and $\|\varphi^{\infty} \|_{\mathrm{Lip}}\leq M^{\varphi,0}_x$.
\end{lemma}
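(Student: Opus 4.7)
My plan is to construct $\varphi^{\infty}$ as the Lipschitz-seminorm limit of a Cauchy family of normalized finite-horizon value functions, to extract $\alpha^{\infty}$ from the semigroup structure, and to get uniqueness from a self-contractive estimate built on Lemma~\ref{lemma:SP_stab_fin_cond}.

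First I would exploit time-homogeneity, $\varphi^{T,g}_t=\varphi^{T-t,g}_0$, so that the dynamic programming principle reads $\varphi^{T_2,0}_0=\varphi^{T_2-T_1,\varphi^{T_1,0}_0}_0$ for $T_2>T_1\ge 0$. Applying the stability estimate \eqref{eq:SP_stab_fin_cond_2} with $g=\varphi^{T_1,0}_0$ and $g'=0$, together with the uniform gradient bound $\|\varphi^{T_1,0}_0\|_{\mathrm{Lip}}\le M^{\varphi,0}_x$ from \eqref{eq:SP_grad_est_7}, gives
\begin{equation*}
\|\varphi^{T_2,0}_0-\varphi^{T_2-T_1,0}_0\|_{\mathrm{Lip}}\le C^{-1}M^{\varphi,0}_x\exp(-\lambda(T_2-T_1)).
\end{equation*}
Setting $\psi^T:=\varphi^{T,0}_0-\varphi^{T,0}_0(0)$, so that $\psi^T(0)=0$ and $\|\psi^T\|_{\mathrm{Lip}}\le M^{\varphi,0}_x$, the family $\{\psi^T\}_T$ becomes Cauchy in the Lipschitz seminorm as $T\to\infty$ and thus converges uniformly on compacts to some $\varphi^{\infty}\in\mathrm{Lip}(\bbRD)$ with $\varphi^{\infty}(0)=0$ and $\|\varphi^{\infty}\|_{\mathrm{Lip}}\le M^{\varphi,0}_x$. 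To upgrade to $\varphi^{\infty}\in C^1_{\lip}$, I would invoke the uniform Hessian bound \eqref{eq:SP_Hess_est_2}: for a fixed $\theta>0$, $A_{\theta,T-t}e^{M^{H,g}_{xp}\theta}$ stays bounded as $T\to\infty$ (since the only $T$-dependent term of $A_{\theta,T-t}$ decays like $e^{-\lambda T}$), so $\|\nabla\psi^T\|_{\mathrm{Lip}}$ is uniformly bounded; an Arzel\`a--Ascoli argument then passes $\nabla\psi^T\to\nabla\varphi^{\infty}$ uniformly on compacts and yields $\nabla\varphi^{\infty}\in\mathrm{Lip}(\bbRD;\bbRD)$.

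Second, I would identify $\alpha^{\infty}$ via the semigroup. Using DPP and the fact that adding a constant to the terminal data adds the same constant to the value function, one has $\varphi^{S+T,0}_0=\varphi^{S,\psi^T}_0+\varphi^{T,0}_0(0)$. Passing $T\to\infty$ at fixed $S$ through Lemma~\ref{lemma:SP_stab_fin_cond} yields $\varphi^{S,\varphi^{\infty}}_0=\varphi^{\infty}+c_S$ with $c_S:=\varphi^{S,\varphi^{\infty}}_0(0)$. The DPP applied to $\varphi^{S_1+S_2,\varphi^{\infty}}_0$ gives the Cauchy functional equation $c_{S_1+S_2}=c_{S_1}+c_{S_2}$, and continuity of $S\mapsto c_S$ (standard for value functions under our assumptions) forces $c_S=\alpha^{\infty}S$ for some $\alpha^{\infty}\in\bbR$. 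By time-homogeneity, $\varphi^{T,\varphi^{\infty}}_t(x)=\alpha^{\infty}(T-t)+\varphi^{\infty}(x)$, which by Proposition~\ref{cor:reg_eff}(ii) is a viscosity solution of \eqref{eq:SP_HJB}; hence $(\alpha^{\infty},\varphi^{\infty})$ is a stationary viscosity solution.

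For uniqueness, given two stationary pairs $(\alpha_i,\varphi_i)\in\bbR\times C^1_{\lip}(\bbRD)$, each function $(t,x)\mapsto\alpha_i(T-t)+\varphi_i(x)$ is a viscosity solution of \eqref{eq:SP_HJB} with terminal data $\varphi_i$. Invoking the standard comparison principle for viscosity solutions of HJB (available here since $H$ is Lipschitz in $x$ and locally Lipschitz in $p$, and the solutions have controlled growth), each must coincide with the value function $\varphi^{T,\varphi_i}_t$. Therefore $\varphi^{T,\varphi_1}_0-\varphi^{T,\varphi_2}_0=(\alpha_1-\alpha_2)T+\varphi_1-\varphi_2$; since the Lipschitz seminorm annihilates constants, \eqref{eq:SP_stab_fin_cond_2} yields
\begin{equation*}
\|\varphi_1-\varphi_2\|_{\mathrm{Lip}}\le C^{-1}\|\varphi_1-\varphi_2\|_{\mathrm{Lip}}\exp(-\lambda T),
\end{equation*}
which for $T$ large enough forces $\varphi_1-\varphi_2$ to be constant, hence zero by the normalization $\varphi_i(0)=0$; then $\alpha_1=\alpha_2$ follows. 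The step I expect to be the most delicate is upgrading the Cauchy convergence of $\psi^T$ in the Lipschitz seminorm to genuine $C^1$-convergence of the limit: Lipschitz-seminorm convergence alone does not imply that the limit is differentiable, and one really needs the uniform-in-$T$ Hessian estimate of Proposition~\ref{prop:SP_Hess_est} to run the Arzel\`a--Ascoli argument on the gradients.
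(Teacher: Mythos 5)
Your proposal takes a genuinely different route to existence than the paper, and it is essentially sound, but the uniqueness step papers over the same technical obstruction that the paper addresses via truncation of the Hamiltonian.

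\textbf{Existence.} The paper proves existence of a fixed point $\varphi^{\infty,T}$ for the normalized map $\bar\Phi_T$ via Schauder's theorem on the compact convex set $B_M$, then shows via DPP and dyadic iteration that the fixed point is $T$-independent. You instead construct $\varphi^{\infty}$ directly as a Cauchy limit in the Lipschitz seminorm of the normalized finite-horizon value functions $\psi^T$, using the contraction from Lemma~\ref{lemma:SP_stab_fin_cond}. This is a legitimately different (and arguably more constructive) argument; a bonus is that it avoids the separate technical step of proving continuity of $\bar\Phi_T$, which the paper has to carry out at Lemma~\ref{lem:rest_cont}. You do implicitly rely on monotonicity (Proposition~\ref{item_3:Lyapunov_funct_coup_by_ref}) to obtain constants $C,\lambda$ that are uniform in $T_1$, since the constants in \eqref{eq:SP_stab_fin_cond_2} depend on $M_x^{\varphi,g}$ and hence on $\|g\|_{\lip}$ — this should be said explicitly. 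Also, at the step where you pass $T\to\infty$ to identify $c_S$, note that Lipschitz-seminorm convergence $\|\psi^T-\varphi^\infty\|_{\lip}\to 0$ does not directly yield pointwise convergence of $\varphi^{S,\psi^T}_0$ to $\varphi^{S,\varphi^{\infty}}_0$: one should normalize both sides at the origin before taking the limit, or estimate the value-function difference pointwise using the a priori moment bounds on optimal trajectories.

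\textbf{Uniqueness.} Here you and the paper follow the same basic strategy (identify any stationary $C^1_{\lip}$ solution with the value function, then feed it into the contraction estimate \eqref{eq:SP_stab_fin_cond_2}), but your invocation of ``the standard comparison principle for viscosity solutions of HJB'' is doing real work that deserves to be spelled out. The Hamiltonian $H(x,p)$ in this paper is \emph{not} uniformly Lipschitz in $p$ — think of $F(x,u)=|u|^2/2$, $b(x,u)=b_0(x)+u$, where $H$ is quadratic in $p$ — so the off-the-shelf comparison theorem (e.g.\ \cite[Thm 6.1]{yong1999stochastic}) does not apply directly to $H$. The paper handles this by explicitly truncating: it introduces $H^R(x,p)=\sup_{|u|\le R}b(x,u)\cdot p+F(x,u)$, uses the $C^1$ regularity and the a priori bound $\|\bar\varphi^{\infty}\|_{\lip}\le M$ to show that, at all test-function touching points, $H$ and $H^R$ coincide for $R$ large enough, so the stationary solution also solves the truncated equation. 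Comparison then applies to $H^R$, and one lets $R\to\infty$ in the associated restricted control problem. Your phrase ``the solutions have controlled growth'' gestures at this, but a correct argument needs exactly this truncation mechanism; without it, you cannot conclude that a $C^1_{\lip}$ stationary viscosity solution equals the value function $\varphi^{T,\varphi_i}_t$.

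\textbf{Verdict.} Your construction of $\varphi^{\infty}$ is a valid alternative to the paper's Schauder argument and is arguably cleaner; the identification of $\alpha^{\infty}$ via the Cauchy functional equation is fine modulo the small normalization issue noted above; the uniqueness argument is morally the same as the paper's but needs the truncation step made explicit.
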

\begin{proof}
We recall that $C(\bbRD)$ is the set of continuous functions and that $C_0(\bbRD)=\{ g\in C(\bbRD):g(0)=0\}$. We equip $C_0(\bbRD)$ with the topology of uniform convergence on compact sets and for any $M$ we define
\bes 
B_M=\{g\in C_0(\bbRD) : \|g\|_f\leq M\}, \quad f=\bmf(\bar\kappa_b).
\ees 
Since $f$ is equivalent to the identity function, Arzéla-Ascoli Theorem implies that $B_M$ is a convex compact subset of $C_0(\bbRD)$. Next, for any $T,M>0$ we consider the mapping 
\bes 
\Phi_T:B_M\longrightarrow C(\bbRD), \quad g \mapsto \varphi^{T,g}_0
\ees
and the mapping 
\be\label{eq:erg_HJB_4}
\bar\Phi_T:B_M\longrightarrow C_0(\bbRD), \quad g \mapsto \varphi^{T,g}_0(\cdot)-\varphi^{T,g}_0(0).
\ee
Thanks to \eqref{eq:SP_grad_est_6} we know that
\bes
\bar\Phi_T(B_M) \subseteq B_M \quad \forall \, M \geq \frac{M^F_x}{C\lambda}=M^{\varphi,0}_{x},
\ees
where $C=\bmC(\bar\kappa_b),\lambda=\bm\lambda(\bar\kappa_b)$. We shall prove at Lemma \ref{lem:rest_cont} that the restriction of $\bar\Phi_T$ to $B_M$ is continuous. We have thus verified the hypothesis of Schauder's fixed point theorem whose application yields the existence of a fixed point $\varphi^{\infty,T}$ for $\bar\Phi_T$ such that $\|\varphi^{\infty,T}\|_{\lip}\leq \|\varphi^{\infty,T}\|_{f}\leq M^{\varphi,0}_{x}$. But then, setting $\alpha^{\infty,T}:=\Phi_T(\varphi^{\infty,T})(0)$, we deduce that  \bes
\Phi_T(\varphi^{\infty,T})=\varphi^{\infty,T}+\alpha^{\infty,T}.
\ees 
Thanks to Proposition \ref{cor:reg_eff} we also know that $\varphi^{\infty,T}\in C^1_{\mathrm{Lip}}(\bbRD)$ and that $\nabla\varphi^{\infty,T}\in\mathrm{Lip}(\bbRD;\bbRD).$ Moreover, because of the bound \eqref{eq:SP_stab_fin_cond_4}, for any $T>0$ the fixed point is unique in the set $C_0(\bbRD)\cap C^1_{\lip}(\bbRD)$ \footnote{Note that the function $f$ used here does not coincide with the function $f$ appearing in \eqref{eq:SP_stab_fin_cond_4}. However, both functions are equivalent to the identity function and therefore the corresponding norms are all equivalent to $\|\cdot\|_{\mathrm{Lip}}$.} Next, we observe that, because of the dynamic programming principle we have that, for any fixed $T$ 
\bes
\Phi_{T}(g) = \Phi_{T/2}(\Phi_{T/2}(g)), \quad \forall g\,\text{s.t.}\,\, \| g\|_f <+\infty.
\ees
This fact, together with the uniqueness of the fixed point imply that 
\bes
\alpha^{\infty,T} = 2\alpha^{\infty,T/2}, \quad \varphi^{\infty,T}=\varphi^{\infty,T/2}, \quad \forall T>0
\ees
and therefore that
\bes
\Phi^{T/2}(\varphi^{\infty,T})=\varphi^{\infty,T}+\frac{\alpha^{\infty,T}}{2}\quad \forall T>0.
\ees
Iterating this argument, we find that
\be\label{eq:erg_HJB_1}
\Phi_T(\varphi^{\infty,1}) = \varphi^{\infty,1} - \alpha^{\infty,1} T:= \varphi^{\infty} - \alpha^{\infty} T\quad \forall T\in D,
\ee
where $D$ is the set of dyadic numbers in $[0,1]$. Given the continuity of the map $T\mapsto \Phi_T(\varphi^{\infty})(x)$ for all $x\in\bbRD$, that is also a consequence of the dynamic programming principle, we can extend \eqref{eq:erg_HJB_1} to all $T\in[0,1]$. Since $\Phi_{1-t}(\varphi^{\infty})=(\varphi^{1,\varphi_{\infty}}_{t})_{0\leq t\leq 1}$ and the latter is a viscosity solution of \eqref{eq:SP_HJB}, see Lemma \ref{cor:reg_eff}, we have shown the existence of a stationary solution for $T=1$, and the same proof can be used to argue for a general $T$. We now proceed to prove uniqueness. To this aim, assume that  $(\bar\alpha^{\infty},\bar\varphi^{\infty})\in \bbR\times C^1_{\lip}(\bbRD)$ is another stationary viscosity solution with $\bar\varphi^{\infty}(0)=0$ and set $M=\|\bar\varphi^{\infty}\|_{\lip}$. If we now set 
\bes
R_0=\frac{M_u(1+M)}{\omega_M},\quad H^{R}(x,p) = \sup_{|u|\leq R}b(x,u)\cdot p+F(x,u)
\ees
we find that, arguing exactly as in the proof of \eqref{eq:control_bound_1} 
\bes
   H(x,p)=H^{R}(x,p) \quad \forall x\in\bbRD, |p|\leq M, R\geq R_0.
\ees
But then, since $\bar\varphi^{\infty}$ is of class $C^1(\bbRD)$, it follows from the definition of viscosity solution that $(\bar\alpha^{\infty},\bar\varphi^{\infty})$ is also a stationary viscosity solution for the equation 
\begin{equation}\label{eq:erg_HJB_2}
    \begin{cases}
    \partial_t \varphi_t(x) -H^{R}(x,\nabla \varphi_t(x))+\frac{\sigma^2}{2}\Delta\varphi_t(x)=0 \quad (t,x) \in(0,1)\times \bbRD,\\
    \varphi_1(x)=\bar\varphi^{\infty}(x) \quad x\in\bbRD
    \end{cases}
\end{equation}
for any $R\geq R_0$. For any $R>0$ uniqueness of viscosity solution for the above equation is well known under the current assumptions on $b(\cdot,\cdot)$ and $F(\cdot,\cdot)$, see \cite[Thm 6.1]{yong1999stochastic} for example. Moreover,
\be\label{eq:erg_HJB_3}
\forall R>R_0, \quad \bar\varphi^{\infty}+\bar\alpha^{\infty}(1-t) = \inf_{u\in\cU^R_{[t,1]}} J_{t,x}^{T,\bar\varphi^{\infty}}(u)
\ee
where 
\bes
\cU^R_{[t,1]} = \left\{(u_s)_{s\in[t,1]} \in \cU_{[t,1]}\,\, \text{s.t.} \,\,\,|u|\leq R\,\, \bbP-\text{a.s.}\right\}
\ees
By eventually letting $R\rightarrow+\infty$ in \eqref{eq:erg_HJB_3} and using Assumption \ref{ass:SP_wellposed} and \ref{ass:SP} we find that 
\bes
\bar\varphi^{\infty}(x)+\alpha^{\infty}(1-t) = \varphi^{1,\bar\varphi^{\infty}}_t(x), \quad (t,x)\in[0,T]\times\bbRD.
\ees
But then, we find that $\bar\varphi^{\infty}$ is a fixed point in $C^1_{\lip}(\bbRD)$ of the operator $\bar\Phi_{1}$. The contraction estimate \eqref{eq:SP_stab_fin_cond_4} guarantees uniqueness of such a fixed point. Therefore $\bar\varphi^{\infty}=\varphi^{\infty}$ and $\bar\alpha^{\infty}=\alpha^{\infty}$. It only remains to show that $\|\nabla\varphi^{\infty}\|_{\mathrm{Lip}}<+\infty$, that is a direct consequence of Corollary \ref{cor:reg_eff}.
\end{proof}

\subsection{Proof of Theorem \ref{thm:SP_val_fun} and \ref{thm:SP_turnpike_1}}

\begin{proof}[Proof of Theorem \ref{thm:SP_val_fun}]
The proof of item $(i)$ is given at Lemma \ref{lemma:SP_grad_est}, the proof of item $(ii)$ is obtained from Proposition \ref{cor:reg_eff}. $(iii)$ and $(iv)$ are proven at Lemma \ref{lem:erg_HJB} and \ref{lemma:SP_stab_fin_cond} respectively.
\end{proof}

\begin{proof}[Proof of Theorem \ref{thm:SP_turnpike_1}]
Combining through a triangular inequality the contraction estimate \eqref{eq:contraction_SP_1_b} from Theorem \ref{thm:contraction_SP} with the sensitivity bound \eqref{eq:sticky_coup_1} from Lemma \ref{lem:sticky_coup} we obtain the proof of \ref{item_1:SP_turnpike_1} for $\mathrm{Law}(\xi)=\delta_x,\mathrm{Law}(\xi')=\delta_{x'}$, $x,x'\in\bbRD$. The extension to arbitrary random variables $\xi,\xi'$ is obtained in a classical way averaging over the initial condition, see e.g. the proof of Corollary 2 in \cite{eberle2016reflection}.  We now turn to the proof of $(ii)$. From Lemma \ref{lem:erg_HJB} we know that $\varphi^{\infty}$ is of class $C^1(\bbRD)$and $\nabla\varphi^{\infty}$ is a bounded Lipschitz vector field. But then, it follows from Lemma \ref{lem:coeff_bound} and in particular from \eqref{eq:Hess_Ham_bound} and \eqref{eq:coeff_bound_2} that
\bes
\bbRD\ni x \mapsto -D_pH(x,\nabla\varphi^{\infty}(x))=\beta^{\infty}(x)
\ees
is a Lipschitz-continuous vector field, from which existence of strong solutions for \eqref{eq:SP_turnpike_2} follows. Moreover, we observe that the fixed point property of $\varphi^{\infty}$ implies that for all $T>0$
\be\label{eq:turnpike_9}
\beta^{\infty}(x)= \beta^{T,\varphi^{\infty}}_s(x),\quad \forall (s,x)\in[0,T]\times\bbRD
\ee
But then, thanks to Corollary \ref{cor:relax_boundary} we have that $\beta^{\infty}\in K$. At this point, the existence and uniqueness of an invariant measure $\mu^{\infty}$ in $\cP_1(\bbRD)$ is obtained in a straightforward way from the contraction estimate at \cite[Thm.\,1]{eberle2016reflection}. Let us now prove item $(iii)$. To this aim, observe that \eqref{eq:turnpike_9} implies that the unique strong solution of \eqref{eq:SP_turnpike_2} is precisely $(\bmX^{0,x,T,\varphi^{\infty}}_s)_{s\in[0,T]}$. We define 
\be\label{eq:SP_turnpike_13}
\tau= \frac{\log\big( M_x^g/M^{\varphi,0}_x\big)}{\bm\lambda(\bar\kappa_b)}, \quad g'_{\tau}=\varphi^{T,g'}_{T-\tau}
\ee
The gradient estimate \eqref{eq:SP_grad_est_7} implies that 
\be\label{eq:SP_turnpike_10}
\|g'_{\tau}\|_{\mathrm{Lip}}\leq 2 M_{x}^{\varphi,0}.
\ee
We have, using the dynamic programming principle and the properties of stationary solutions:
\bes
W_1(\mathrm{Law}(\bmX^{0,x,T,\varphi^{\infty}}_s),\mathrm{Law}(\bmX^{0,x',T,g'}_s))=W_1(\mathrm{Law}(\bmX^{0,x,T-\tau,\varphi^{\infty}}_s),\mathrm{Law}(\bmX^{0,x',T-\tau,g'_{\tau}}_s))
\ees
At this point, we can use the bound \eqref{eq:SP_turnpike_8} choosing $g=\varphi^{\infty}$, $g'=g'_{\tau}$ and shortening the time horizon, i.e. replacing $T$ with $T-\tau$. We obtain, with the help of \eqref{eq:SP_turnpike_10} and $\|\varphi^{\infty}\|_{\lip}\leq M^{\varphi,0}_x $ that
\be\label{eq:SP_turnpike_11}
\begin{split}
W_1(\mathrm{Law}(\bmX^{0,\xi,T-\tau,\varphi^{\infty}}_s),\mathrm{Law}(\bmX^{0,\xi',T-\tau,g'_{\tau}}_s)) \leq C\, W_1(\mathrm{Law}(\xi),\mathrm{Law}(\xi'))\exp(-\lambda^{\infty} s)\\
+ C\,\|\varphi^{\infty}-g'_{\tau}\|_{\mathrm{Lip}} \exp(-\lambda^{\infty} (T-\tau-s)\big)
\end{split}
\ee
holds uniformly on $0\leq s\leq T-\tau$ and $\mathrm{Law}(\xi),\mathrm{Law}(\xi')\in\cP_1(\bbRD)$ with 
\be\label{eq:SP_turnpike_15}
\lambda^{\infty}=\bm\lambda(\kappa), \quad \kappa(r)=\bar\kappa_b(r)-\frac{M_u^2}{\sigma^2 r}\Big(\frac{4(1+M_x^{\varphi,0})}{\omega_{M_x^{\varphi,0}}}+\frac{3M^{\varphi,0}_x}{ \omega_{2M^{\varphi,0}_x}} \,\Big)\quad \forall r>0
\ee
 and 
 \bes
C=\max\left\{\frac{M^2_{u}}{\lambda\,\bmC(\kappa)\,\omega_{2 M^{\varphi,0}_x}},\frac{1}{\bmC(\kappa)} \right\},
 \ees
 where we used the monotonicity of the function $\bmC(\cdot)$ to obtain the last expression.
Note in particular that $\lambda^{\infty}$ does not depend on $g$. To conclude, we first invoke \eqref{eq:SP_stab_fin_cond_2} to obtain  
\be\label{eq:SP_turnpike_12}
\|\varphi^{\infty}-g'_{ \tau}\|_{\mathrm{Lip}}\leq \tilde{C}\exp(-\tilde{\lambda} \tau)\|\varphi^{\infty}-g'\|_{\lip}
\ee
with 
\be\label{eq:SP_turnpike_18}
(\tilde{C},\tilde{\lambda})=(\bmC^{-1}(\tilde\kappa),\bm\lambda(\tilde{\kappa})),\quad  \tilde{\kappa}(r)=\bar\kappa_b(r)-\frac{M_u^2}{\sigma^2 r}\Big(\frac{4(1+M_x^{\varphi,0})}{\omega_{M_x^{\varphi,0}}}+\frac{M^{\varphi,0}_x+M^{\varphi,g'}_x}{ \omega_{M^{\varphi,g'}_x}} \,\Big)\quad \forall r>0.
\ee
The desired result now follows plugging \eqref{eq:SP_turnpike_12} into \eqref{eq:SP_turnpike_11} setting
\be\label{eq:SP_turnpike_16}
A= \max\{C,C\,\tilde{C}\exp((\lambda^{\infty}-\tilde\lambda) \tau) \}
\ee
and choosing $\mathrm{Law}(\xi)=\mu^{\infty}$. All what is left to do is to prove item $(iv)$. To this aim, observe that, using Lemma \ref{lemma:SP_stab_fin_cond} we have that for all $s\leq T-\tau$
\bes
\begin{split}
     \|\varphi^{\infty}-\varphi^{T-\tau,g'_{\tau}}_s \|_{\lip}
    &\leq \bmC(\kappa)^{-1} \exp(-\lambda^{\infty}(T-\tau-s))\| \varphi^{\infty}-g'_{\tau}\|_{\lip}\\
    &\stackrel{\eqref{eq:SP_turnpike_12}}{\leq}\tilde{C}\exp((\lambda^{\infty}-\tilde{\lambda}) \tau)\bmC(\kappa)^{-1}\|\varphi^{\infty}-g'\|_{\lip} \exp(-\lambda^{\infty}(T-s))
\end{split}
\ees
where $\lambda^{\infty},\kappa$ have been defined at \eqref{eq:SP_turnpike_15} and $(\tilde{\lambda},\tilde{C})$ at \eqref{eq:SP_turnpike_18}. In the next lines, we shall use the notation $\lesssim$ to denote inequality up to a positive multiplicative constant depending only on the constants $M_x,M_u,M_{xx},M_{xu},\|g'\|_{\lip}$ and the functions $\omega,\bar\kappa_b$. In particular, we rewrite the above as 
\be\label{eq:SP_turnpike_19}
\|\varphi^{\infty}-\varphi^{T,g'}_s \|_{\lip}\lesssim  \|\varphi^{\infty}-g'\|_{\lip} \exp(-\lambda^{\infty}(T-s))
\ee
Consider now $\xi,\xi'$ with $\rm{Law}(\xi)=\mu^{\infty},\mathrm{Law}(\xi')\in\cP_1(\bbRD)$ and consider the corresponding optimal controls
\bes
\begin{split}
\bmu^{0,x,T,g}_s=w(\bmX^{0,\xi',T,g'},\nabla\varphi^{T,g'}_s(\bmX^{0,\xi',T,g'}))\\
\bmu^{0,x,T,\varphi^{\infty}}_s=w(\bmX^{0,\xi,T,\varphi^{\infty}},\nabla\varphi^{\infty}(\bmX^{0,\xi,T,\varphi^{\infty}}_s)))
\end{split}
\ees
In what follows, since there is no ambiguity, we shall denote$\bmX^{0,\xi’,T,g’}_s$ by $\bmX'_s$ and the process $\bmX^{0,\xi,T,\varphi^{\infty}}_s$  by $\bmX_s$.
\be\label{eq:SP_turnpike_20}
\begin{split}
&W_1(\mathrm{Law}(w(\bmX'_s,\nabla\varphi^{T,g'}_s(\bmX'_s))),\mathrm{Law}(w(\bmX_s,\nabla\varphi^{\infty}_s(\bmX_s))))\\
&\stackrel{\eqref{eq:SP_turnpike_19},\eqref{eq:coeff_bound_13}}\lesssim  \|\varphi^{\infty}-g'\|_{\lip}\exp(-\lambda^{\infty}(T-s))\\
&+W_1\Big(\mathrm{Law}(w(\bmX'_s,\nabla\varphi^{\infty}(\bmX'_s))),\mathrm{Law}(w(\bmX_s,\nabla\varphi^{\infty}(\bmX_s)))\Big)
\end{split}
\ee
Next, we observe that 
\bes
\begin{split}&W_1\Big(\mathrm{Law}(w(\bmX'_s,\nabla\varphi^{\infty}(\bmX'_s))),\mathrm{Law}(w(\bmX_s,\nabla\varphi^{\infty}(\bmX_s)))\Big)\\
&\stackrel{\text{Lemma}\, \ref{lemma:SP_grad_est},\eqref{eq:coeff_bound_11},\eqref{eq:coeff_bound_13}}{\lesssim}   W_1(\mathrm{Law}(\bmX'_s),\mathrm{Law}(\bmX_s))+ W_1(\mathrm{Law}(\nabla\varphi^{\infty}(\bmX'_s)),\mathrm{Law}(\nabla\varphi^{\infty}(\bmX_s)))\\
& \stackrel{\nabla\varphi^{\infty}\in\lip(\bbRD;\bbRD)}{\lesssim}  W_1(\mathrm{Law}(\bmX'_s),\mathrm{Law}(\bmX_s)))\\
&\stackrel{\text{Theorem} \ref{item_3:SP_turnpike}}{\lesssim}  \|\varphi^{\infty}-g'\|_{\lip}\exp(-\lambda^{\infty}(T-s)) + W_1(\mu^{\infty},\mathrm{Law}(\xi'))\exp(-\lambda^{\infty} s)
\end{split}
\ees
Plugging this bound into \eqref{eq:SP_turnpike_20} gives the desired result.
\end{proof}

\section{Appendix}


\subsection{Optimality conditions}

\begin{proof}[Proof of Proposition \ref{prop:SP_opt_cond}]

We first show the existence statement of Proposition \ref{item_1:SP_opt_cond} that is a consequence of known results and an approximation procedure. Fix $M\in\N$ and consider the drift field $b^M:\bbR^{d+p}\longrightarrow\bbRD$ defined as follows:
\bes
b^M(x,u) = b(x,0)+ (b(x,u)-b(x,0)) \chi^M(|u|),
\ees
where $\chi^M$ is a smooth decreasing function such that  
\bes
\chi^M(r) = \begin{cases}
	1, & \text{if $r\leq M$, }\\
	0, & \text{if $r\geq M+1$.}
	\end{cases}
\ees
Moreover, we choose $\chi^{M}$ in such a way that $\sup_{r\geq 0}|\frac{\De}{\De r}\chi^{M}(r)|<+\infty$. Next, we consider the Hamiltonian
\be\label{eq:opt_cond_1}
H^M(x,p)=\inf_{|u| \leq M} F(x,u)+b^{M}(x,u)\cdot p.
\ee
  For any $M$, we can invoke \cite[Ch. IV, Thm 4.3]{fleming2006controlled} to obtain existence of a unique classical solution $\varphi^{M,T,g} \in C^{1,2}_p([0,T)\times\bbRD)$ of 
\be\label{eq:HJB_M}
\begin{cases}
\partial_t \varphi_t +\frac{\sigma^2}{2}\Delta  \varphi_t -H^M(x,\nabla\varphi_t(x))=0,\\
 \varphi_T=g.
\end{cases}
\ee
Furthermore, an application of \cite[Ch IV, Lemma 8.1]{fleming2006controlled} gives the existence of a constant $K_1\in(0,+\infty)$ such that
\be\label{eq:opt_cond_3}
\sup_{M\in\mathbb{N}}\sup_{\substack{x\in \bbRD \\ 0\leq t\leq T }} | \nabla\varphi^{M,T,g}_t(x)| \leq K_1. 
\ee
For any $(x,p)$, let $w^M(x,p)$ be an optimizer in \eqref{eq:opt_cond_1}. Abbreviating $w^M(x,\nabla\varphi^{M,T,g}_t(x))$ with $w^M$ and imposing 
\bes
b^M(x,w^M)\cdot \nabla\varphi^{M,T,g}_t(x)+F(x,w^M)\leq b^M(x,(1-\varepsilon)w^M)\cdot \nabla\varphi^{M,T,g}_t(x)+F(x,(1-\varepsilon)w^M)
\ees and eventually letting $\varepsilon\downarrow 0$ we find
\bes
D_u F(x,w^M)\cdot w^M \leq  -(\nabla\varphi^{T,M,g}_t(x))^{\top} \cdot D_u b^M(x,w^M)\cdot w^M  
\ees
But then, using \eqref{eq:SP_Fconv_ass} we find that there exists $K_2\in(0,+\infty)$ with the property that
\be\label{eq:SP_opt_cond_8}
\sup_{M\in\N}\sup_{\substack{x\in \bbRD, |p|\leq K_1 \\ 0\leq t\leq T }} |w^M(x,p)| \leq K_2.
\ee
We now introduce the Hamiltonian
\bes
\bar H:\bbR^{d+d}\longrightarrow \bbR,\quad \bar{H}(x,p) = -\inf_{|u|\leq K_2} b(x,u) \cdot p+F(x,u).
\ees
As a result of \eqref{eq:SP_opt_cond_3} and \eqref{eq:SP_opt_cond_8} we have that for all $M,M'\geq K_2$ we find that
\be\label{eq:SP_opt_cond_9}
\begin{split}
H^M(x,\nabla\varphi^{M,T,g}_t(x))= \bar H(x,\nabla\varphi^{M,T,g}_t(x)),\\
H^M(x,\nabla\varphi^{M,T,g}_t(x))= H^{M'}(x,\nabla\varphi^{M,T,g}_t(x))
\end{split}
\ee
hold uniformly on $(t,x)\in [0,T)\times\bbRD$. 
But then, by uniqueness of solutions for \eqref{eq:HJB_M} we find that
\bes
\varphi^{M,T,g}\equiv\varphi^{M',T,g}:=\bar\varphi^{T,g}, \quad M,M'\geq K_2.
\ees
and that $\bar\varphi^{T,g}$ is a solution of \eqref{eq:HJB_M} for the Hamiltonian $\bar H$ in $C^{1,2}_p([0,T)\times\bbRD)$. As a consequence, for all $M\geq K_2$ we have  
\bes
    \bar H(x,\nabla\bar\varphi^{T,g}_t(x))=H^{M}(x,\nabla\bar\varphi^{T,g}_t(x))= \sup_{|u|\leq M} b(x,u)\cdot \nabla \bar\varphi^{T,g}_t(x)+F(x,u)
\ees
But then, letting $M\rightarrow+\infty$ in \eqref{eq:SP_opt_cond_9} we find 
\bes
\bar H(x,\nabla\bar\varphi^{T,g}_t(x)) = \sup_{u\in\bbRD} b(x,u)\cdot \nabla \bar\varphi^{T,g}_t(x)+F(x,u)=H(x,\nabla\bar\varphi^{T,g}_t(x)).
\ees
We have therefore shown that $\bar\varphi^{T,g}$ is a classical solution for equation \eqref{eq:SP_HJB} with the desired regularity properties. Let now $\tilde\varphi^{T,g}\in\{\varphi\in C^{1,2}_{p,\lip}([0,T)\times\bbRD): \sup_{s\in[0,T]}\|\varphi_s\|_{\lip}<+\infty\}$ be another classical solution of \eqref{eq:SP_HJB}, that may or may not coincide with $\bar\varphi^{T,g}$. Then, it follows directly from Assumption \ref{ass:SP_wellposed}-\ref{ass:SP} that the drift field 
\bes
[t,T]\times\bbRD\ni(s,x)\mapsto -D_p H(x,\nabla\tilde\varphi^{T,g}_s(x)) =b(x,w(x,\nabla\tilde\varphi^{T,g}_s(x)))
\ees
is locally Lipschitz and with linear growth in the space variable. But then, we know that for any $(t,x)$ there exist a unique strong solution for 
\bes
\begin{cases}
\De X_s= - D_pH(X_s, \nabla\tilde\varphi^{T,g}_s(X_s))\De s +\sigma\De B_s,\\
X_t=x.
\end{cases}
\ees
which proves item $(iii)$. Using the Lipschitzianity of $\tilde{\varphi}^{T,g}$ and our assumptions, we find that $u=w(X_s,\nabla\tilde\varphi^{T,g}_s(X_s))\in\cU_{[t,T]}.$ At this point, we can apply a verification result such as \cite[Ch. IV,  Thm. 3.1]{fleming2006controlled} from which the uniqueness statement in $(i)$ as well as well as the proofs of the statements at item $(ii)$ and $(iv)$ follow at once.
\end{proof}

\subsection{Proof of Lemma \ref{lem:coeff_bound}}
\begin{proof}
Imposing
\bes 
\ip{\partial_uF(x,w(x,p))+D_ub(x,w(x,p))\cdot p-\partial_uF(x,0)+D_ub(x,0)\cdot p}{w(x,p)} \geq \omega_{|p|}|w(x,p)|^2
\ees
and combining it with 
\be\label{eq:SP_coeff_bound_1}
\partial_uF(x,w(x,p))+ D_ub(x,w(x,p))\cdot p=0
\ee
we obtain from \eqref{eq:SP_drift_ass_2} that
\bes
|w(x,p)|\leq M_u\frac{(1+|p|)}{\omega_{|p|}},
\ees
from which \eqref{eq:control_bound_1} follows thanks to the gradient estimate \eqref{eq:SP_grad_est_7}. The relation \eqref{eq:drift_lipschitz_1} is a direct consequence of \eqref{eq:control_bound_1} and \eqref{eq:SP_drift_ass_2}. To prove \eqref{eq:Hess_Ham_bound}, we first differentiate \eqref{eq:SP_coeff_bound_1} w.r.t. $p$ to find
\bes
\Big(\partial^2_{uu}F(x,w(x,p))+ D_{uu} b(x,w(x,p))\cdot p\Big)\cdot D_p w(x,p)= - D_ub(x,w(x,p))\cdot 
\ees
At this point,  \eqref{eq:SP_Fconv_ass} gives
\bes
|D_pw(x,p)| \leq \omega^{-1}_{|p|}M_u \qquad \forall x,p\in\bbRD,
\ees
which is \eqref{eq:coeff_bound_13}. Next, we observe that
\bes
-D_{pp}H(x,p)= D_ub(x,w(x,p))\cdot D_pw(x,p) \leq M^2_u \omega^{-1}_{|p|},
\ees
that is precisely \eqref{eq:Hess_Ham_bound}.
Let us move to the proof of \eqref{eq:coeff_bound_4}. Observing that
\bes\label{eq:SP_coeff_bound_6}
-D_xH(x,p) = D_xb(x,w(x,p))\cdot p + D_xF(x,w(x,p)),
\ees
we obtain from the current hypothesis that
\bes
|D_xH(x,p)|\leq M_x|p|+M_x^F.
\ees
At this point, \eqref{eq:coeff_bound_4} follows from the gradient bound \eqref{eq:SP_grad_est_7}. Let's proceed to the proof of \eqref{eq:coeff_bound_2} and \eqref{eq:coeff_bound_3}. To do so, we first differentiate  \eqref{eq:SP_coeff_bound_1} w.r.t. to the position variables to find
\bes
\begin{split}
\Big(\partial^2_{uu}F(x,w(x,p))+ D_{uu} b(x,w(x,p))\cdot p\Big)\cdot D_x w(x,p)= -D_{xu}b(x,w(x,p))\cdot p-D_{xu}F(x,w(x,p))
\end{split}
\ees
from which we obtain \eqref{eq:coeff_bound_11} and
\be\label{eq:coeff_bound_9}
|D_xw(x,\nabla\varphi^{T,g}_s(x))| \leq  \frac{M_{xu}(1+M^{\varphi,g}_x)}{\omega_{M^{\varphi,g}_x}} \quad \forall x\in\bbRD,0\leq s\leq T.
\ee
At this point \eqref{eq:coeff_bound_2} follows observing that 
\bes
\begin{split}
|D_{xp}H|(x,\nabla\varphi^{T,g}_s(x)) &\leq |D_xb(x,w(x,\nabla\varphi^{T,g}_s(x)))|\\
&+|D_ub(x,w(x,\nabla\varphi^{T,g}_s(x)))||D_xw(x,w(x,\nabla\varphi^{T,g}_s(x)))|\\
&\leq M_x+M_u |D_xw(x,\nabla\varphi^{T,g}_s(x))| \\
&\stackrel{\eqref{eq:coeff_bound_9}}{\leq} M_x+M_u \frac{M_{xu}(1+M^{\varphi,g}_x)}{\omega_{M^{\varphi,g}_x}} .
\end{split}
\ees
The bound \eqref{eq:coeff_bound_3} is obtained in a similar way. Indeed, starting from the identity
\bes
\begin{split}
D_{xx}H(x,p) &= D_{xx}b(x,w(x,p))\cdot p+ D_xw(x,p)\cdot(D_{xu}b(x,w(x,p))\cdot p)\\
                &+D_{xx}F(x,w(x,p))+ D_xw(x,p)\cdot D_{xu}F(x,w(x,p))
\end{split}
\ees
we obtain
\bes
\begin{split}
|D_{xx}H|(x,\nabla\varphi^{T,g}_s(x)) &\leq M_{xx}|\nabla\varphi^{T,g}_s(x)|+M_{xu}|\nabla\varphi^{T,g}_s(x)||D_xw(x,\nabla\varphi^{T,g}_s(x))|\\
&+M_{xx}+ M_{xu}|D_xw(x,\nabla\varphi^{T,g}_s(x))|\\
&\stackrel{\eqref{eq:SP_grad_est_7},\eqref{eq:coeff_bound_9}}{\leq} M_{xx}(1+M_x^{\varphi,g})+\frac{M^2_{xu}(1+M^{\varphi,g}_x)^2}{\omega_{M^{\varphi,g}_x}}.
\end{split}
\ees
\end{proof}

\subsection{Proof of Proposition \ref{prop:aux_hess_bd}}
\begin{proof}
The bounds involving $H,D_xH$ are straighforward consequence of \eqref{eq:drift_lipschitz_1},\eqref{eq:coeff_bound_4}. A stronger version of the gradient bound on $\varphi^{T,g}$ has already been proven at Lemma \ref{lemma:SP_grad_est}. This estimate implies the desired linear bound on $\varphi^{T,g}_t(x)$. In order to establish the linear growth of $\partial_t\varphi^{T,g}_t(x)$ we observe that because of Proposition \ref{prop:SP_opt_cond} and \eqref{eq:control_bound_1} $\varphi^{T,g}_t(x)$ is the optimal value of the problem obtained adding to \eqref{eq:SP_prob} the additional constraint that admissible controls must satisfy 
\bes
\bbP-\text{a.s.} \quad |u_s |\leq \frac{M_u(1+M^{\varphi,g}_x)}{\omega_{M^{\varphi,g}_x}}\quad\forall s\in[t,T].
\ees
Using this equivalent formulation of \eqref{eq:SP_prob}, all hypothesis needed to apply \cite[Thm 3, Sec 4.4]{krylov2008controlled} are satisfied and the linear growth of $\partial_t\varphi^{T,g}_t(x)$ follows from this result. It remains to show the bound on the Hessian. To this aim, we observe that the standing assumptions enables to apply the semiconcavity estimate \cite[Sec 4.2, Thm 3]{krylov2008controlled} giving the existence of some constant $C'_T\in(0,+\infty)$ such that the one-sided bound
\be\label{eq:aux_hess_bd_2}
\forall\, (t,x) \in[0,T]\times\bbRD \quad \nabla^2 \varphi^{T,g}_t(x) \leq C'_T \mathrm{I}
\ee
holds, where the above inequality is to be an understood in the sense of quadratic forms.
To conclude, we observe that using the bounds on $\partial^{T,g}_t\varphi^{T,g},\nabla\varphi^{T,g},$ and the HJB equation \eqref{eq:SP_HJB}, we find that, enlarging the value of $C'_T$ if necessary, 
\bes
\forall\, (t,x) \in[0,T]\times\bbRD \quad |\Delta\varphi^{T,g}_t(x)|\leq C'_T
\ees
But then, combining this last bound with \eqref{eq:aux_hess_bd_2} we obtain the linear growth of $|\nabla^2\varphi^{T,g}_t|$.
\end{proof}

\subsection{Proof of Propositon \ref{prop:policy_opt}}
\begin{proof}
Consider an approximating sequence $g^M\subseteq C^3_{\lip}(\bbRD)$ such that 
\bes 
\lim_{M\rightarrow +\infty} \sup_{x\in\bbRD}|g^M-g|(x)=0,\quad \lim_{M\rightarrow+\infty} \|g^M -g\|_{\mathrm{Lip}}=0.
\ees
The existence of such a sequence is granted by \cite[Prop. A item (b)]{czarnecki2006approximation}. From these properties and Lemma \ref{lemma:SP_stab_fin_cond} it follows that 
\be\label{eq:policy_opt_3}
\begin{split}
\lim_{M\rightarrow+\infty}\sup_{\substack{t\in[0,T]\\x\in\bbRD}}|\varphi^{T,g^M}_t -\varphi^{T,g}_t |(x) =0,\\ \lim_{M\rightarrow+\infty}\sup_{\substack{t\in[0,T]\\x\in\bbRD}}|\nabla\varphi^{T,g^M}_t -\nabla\varphi^{T,g}_t |(x) =0
\end{split}
\ee
In particular, the second identity in the above implies that
\be\label{eq:policy_opt_1}
\lim_{M\rightarrow+\infty}\sup_{\substack{t\in[0,T]\\x\in\bbRD}}|\beta^{T,g^M}_t -\beta^{T,g}_t |(x) =0
\ee
Fix now $\varepsilon>0$. Since $\beta^{T,g}$ is uniformly Lipschitz in space and continuous in time (see \eqref{eq:reg_eff_3}) on $[0,T-\varepsilon]\times\bbRD$, we deduce from \eqref{eq:policy_opt_1}  and a standard application of Gr\"onwall's Lemma that 

\be\label{eq:policy_opt_2}
\bbP-a.s. \quad \limsup_{M\rightarrow+\infty}  \sup_{s\in[0,T-\varepsilon]} |\bmX^{0,x,T,g^M}_s-\bmX^{0,x,T,g}_s|=0.
\ee
Moreover, combining the standing assumption with \eqref{eq:control_bound_1} and \eqref{eq:policy_opt_2} we obtain that there exist $C\in(0,+\infty)$ such that
\be\label{eq:policy_opt_5}
\bbP-\text{a.s.} \quad \sup_{M\in\N,s\in[0,T]} F(\bmX^{0,x,T,g^M}_s,w(\bmX^{0,x,T,g^M}_s,\nabla\varphi^{T,g^M}_s(\bmX^{0,x,T,g^M}_s))) \leq C\big(1+ \sup_{s\in[0,T]}|\bmX^{0,x,T,g}_s|\big)
\ee
and the right hand side is an integrable random variable because $\bar\kappa_{\beta^{T,g}}\in K$. But then, by dominated convergence we find that
\bes
\begin{split}
\lim_{M\rightarrow+\infty} \bbE\Big[\int_0^{T-\varepsilon}F(\bmX^{0,x,T,g^M}_s,w(\bmX^{0,x,T,g^M}_s,\nabla\varphi^{T,g^M}_s(\bmX^{0,x,T,g^M}_s)))\De s
\\
	-\int_0^{T-\varepsilon}F(\bmX^{0,x,T,g}_s,w(\bmX^{0,x,T,g}_s,\nabla\varphi^{T,g}_s(\bmX^{0,x,T,g}_s)))\De s\Big]=0.
\end{split}
\ees
Next, observe that using \eqref{eq:sticky_coup_1} with $s=T$ and the lipschitzianity of $g$ gives
\bes
\lim_{M\rightarrow+\infty} |\bbE[g^M(\bmX^{0,x,T,g^M}_T)]-\bbE[g(\bmX^{0,x,T,g}_T)]|=0.
\ees
But then,
\bes
\begin{split}
&\lim\sup_{M\rightarrow+\infty} |J^{T,g^M}_{t,x}(w(\bmX^{0,x,T,g^M}_s,\nabla\varphi^{T,g^M}_s(\bmX^{0,x,T,g^M}_s))_{s\in[0,T]})\\
&-J^{T,g}_{t,x}(w(\bmX^{0,x,T,g}_s,\nabla\varphi^{T,g}_s(\bmX^{0,x,T,g}_s))_{s\in[0,T]})|\\
&\leq \lim\sup_{M\rightarrow+\infty} \bbE\Big[\int_{T-\varepsilon}^T|F(\bmX^{0,x,T,g^M}_s,w(\bmX^{0,x,T,g^M}_s,\nabla\varphi^{T,g^M}_s(\bmX^{0,x,T,g^M}_s)))\\
&-F(\bmX^{0,x,T,g}_s,w(\bmX^{0,x,T,g}_s,\nabla\varphi^{T,g}_s(\bmX^{0,x,T,g}_s)))|\De s\Big]\\
&\stackrel{\eqref{eq:policy_opt_5}}{\leq} 2\varepsilon C\, \Big(1+\bbE\Big[ \sup_{s\in[0,T]}|\bmX^{0,x,T,g}_s|\Big]\Big)
\end{split}
\ees
Since $\bbE[ \sup_{s\in[0,T]}|\bmX^{0,x,T,g}_s|]<+\infty$ and $\varepsilon>0$ can be chosen arbitrarily, we obtain from Proposition \ref{item_4:SP_opt_cond} that
\bes
\lim\sup_{M\rightarrow+\infty} |\varphi^{T,g^M}_0(x)
-J^{T,g}_{t,x}(w(\bmX^{0,x,T,g}_s,\nabla\varphi^{T,g}_s(\bmX^{0,x,T,g}_s))_{s\in[0,T]})|=0
\ees
The desired conclusion follows from the first identity in \eqref{eq:policy_opt_3}.
\end{proof}
\subsection{On the continuity of $\bar\Phi_T$}
\begin{lemma}\label{lem:rest_cont}
Let $T>0$ and $\bar\Phi_T:B_M\longrightarrow C_0(\bbRD)$ be given by \eqref{eq:erg_HJB_4}. Then $\bar\Phi_T$ is continuous, i.e. if $(g^k)_{k\geq 1}\subseteq B_M$ converges uniformly on compact sets to $g$, then $\bar\Phi_T(g^k)$ converges uniformly on compact sets to $\bar\Phi_T(g)$.
\end{lemma}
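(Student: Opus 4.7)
The plan is to combine the uniform Lipschitz bound furnished by Lemma \ref{lemma:SP_grad_est} with pointwise convergence of the value functions, derived from the stochastic control representation together with a uniform moment bound on the controlled state.

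First I would note that $\|g^k\|_f\leq M$ and $f(r)\leq r$ (Proposition \ref{item_1:Lyapunov_funct_coup_by_ref}) together imply $\|g^k\|_{\mathrm{Lip}}\leq M$ uniformly in $k$. Lemma \ref{lemma:SP_grad_est} then yields a $k$-independent Lipschitz bound $\sup_k\|\varphi^{T,g^k}_0\|_{\mathrm{Lip}}<+\infty$, so the family $\{\bar\Phi_T(g^k)\}_{k\geq 1}$ is equicontinuous and vanishes at the origin, hence relatively compact in $C_0(\bbRD)$ for the topology of uniform convergence on compact sets. Arzelà--Ascoli then reduces the claim to proving pointwise convergence $\varphi^{T,g^k}_0(x)\to\varphi^{T,g}_0(x)$ for every $x\in\bbRD$.

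Next, I would show that the value functions admit a representation in which the infimum is restricted to uniformly bounded controls. Setting $R_0:=\bmC(\bar\kappa_b)^{-1}(M^F_x/\bm\lambda(\bar\kappa_b)+M)\geq \sup_{g\in B_M}M^{\varphi,g}_x$ and $K_0:=M_u(1+R_0)/\omega_{R_0}$, the estimate \eqref{eq:control_bound_1} combined with Proposition \ref{prop:policy_opt} and the monotonicity of $R\mapsto\omega_R$ shows that for every $g\in B_M\cap C^1_{\mathrm{Lip}}(\bbRD)$ the optimal Markov policy is almost surely bounded by $K_0$, whence
\bes
\varphi^{T,g}_0(x)=\inf_{u\in\cU^{K_0}_{[0,T]}}J^{T,g}_{0,x}(u),\qquad \cU^{K_0}_{[0,T]}=\{u\in\cU_{[0,T]}:|u_s|\leq K_0\ \bbP\text{-a.s.}\}.
\ees
For a general $g\in B_M$, I would mollify to get $g^\delta:=g\ast\gamma_\delta\in C^\infty_{\mathrm{Lip}}(\bbRD)$ with $\|g^\delta\|_{\mathrm{Lip}}\leq M$; the global Lipschitz property of $g$ gives $\|g^\delta-g\|_\infty\to 0$ as $\delta\to 0$, and since both $|\varphi^{T,g^\delta}_0-\varphi^{T,g}_0|$ and $\sup_{u\in\cU^{K_0}}|J^{T,g^\delta}_{0,x}(u)-J^{T,g}_{0,x}(u)|$ are bounded by $\|g^\delta-g\|_\infty$, passing to the limit $\delta\to 0$ extends the restricted representation to every $g\in B_M$.

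For the final step, I would use that for $u\in\cU^{K_0}_{[0,T]}$ the drift $b(\cdot,u_s)$ has Lipschitz constant at most $M_x$ in the state and is uniformly bounded at the origin (by $|b(0,0)|+M_uK_0$), so standard SDE moment estimates yield $\sup_{u\in\cU^{K_0}_{[0,T]}}\bbE[|X^{0,x,u}_T|^2]<+\infty$, and hence $\sup_{u\in\cU^{K_0}_{[0,T]}}\bbE[|X^{0,x,u}_T|\mathbf{1}_{\{|X^{0,x,u}_T|>R\}}]\to 0$ as $R\to+\infty$. Since $g,g^k$ are $M$-Lipschitz and vanish at the origin, $|g^k(y)-g(y)|\leq 2M|y|$ globally, and
\bes
\sup_{u\in\cU^{K_0}_{[0,T]}}|J^{T,g^k}_{0,x}(u)-J^{T,g}_{0,x}(u)|\leq \sup_{|y|\leq R}|g^k(y)-g(y)|+2M\sup_{u\in\cU^{K_0}_{[0,T]}}\bbE\!\left[|X^{0,x,u}_T|\mathbf{1}_{\{|X^{0,x,u}_T|>R\}}\right].
\ees
Choosing $R$ large first to control the tail term uniformly in $k$, then $k$ large so that the uniform-on-compacts convergence $g^k\to g$ makes the first term small, yields $\varphi^{T,g^k}_0(x)\to\varphi^{T,g}_0(x)$ pointwise; together with the equicontinuity from the first step this completes the proof. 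The main obstacle is the second step, namely extending the uniform restriction of admissible controls from the $C^1_{\mathrm{Lip}}$ setting, where Proposition \ref{prop:policy_opt} applies directly, to merely Lipschitz boundary data in $B_M$; once this is in place the remainder is a combination of standard SDE moment bounds and the Arzelà--Ascoli compactness argument.
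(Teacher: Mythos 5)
Your proof is correct and reaches the same conclusion, but takes a genuinely different route from the paper. The paper mollifies the approximating sequence $g^k$ itself (so that the optimal Markov process $\bmX^{0,x,T,g^{k,\varepsilon}}$ is well defined for each $k$), then estimates $\varphi^{T,g^{k,\varepsilon}}_0-\varphi^{T,g^\varepsilon}_0$ by evaluating $g^{k,\varepsilon}-g^\varepsilon$ along those optimal processes and controls the tail with moment bounds specific to the optimal drift; this gives uniform-on-compacts convergence directly, without Arzelà--Ascoli. You instead establish a representation $\varphi^{T,g}_0(x)=\inf_{u\in\cU^{K_0}_{[0,T]}}J^{T,g}_{0,x}(u)$ with a $B_M$-uniform control bound $K_0$ (via \eqref{eq:control_bound_1}, Proposition \ref{prop:policy_opt}, and a mollification pass to remove the $C^1_{\lip}$ restriction), then use the elementary inequality $|\inf_u A(u)-\inf_u B(u)|\leq\sup_u|A(u)-B(u)|$ and a moment bound uniform over the whole class $\cU^{K_0}$; Arzelà--Ascoli then upgrades the resulting pointwise convergence to uniform-on-compacts. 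Each approach has a modest advantage: the paper's avoids having to justify the restricted-control representation for non-smooth $g$ (it never needs controls more general than the optimal Markov one), while yours isolates the stability of the value functional under terminal-cost perturbation more cleanly and makes the locus of the uniform estimate (the bounded-control class) explicit. Two small remarks on your argument: the passage $\sup_{|p|\leq R_0}M_u(1+|p|)/\omega_{|p|}\leq M_u(1+R_0)/\omega_{R_0}$ uses that $R\mapsto\omega_R$ is nonincreasing, which is a without-loss normalization of Assumption \ref{ass:SP}(iv) but worth stating; and the Arzelà--Ascoli step is dispensable, since your moment bound $\sup_{u\in\cU^{K_0}}\bbE[|X^{0,x,u}_T|^2]\lesssim 1+|x|^2$ is already locally uniform in $x$, so the truncation estimate yields uniform convergence on compacts directly.
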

\begin{proof}
Fix $T>0$ and let $(g^k)_{k\geq 1}\subseteq B_M$ converge uniformly on compact sets to $g$. Fix $\varepsilon>0$ and let $g^{k,\varepsilon}=g^{k}\ast \gamma_{\varepsilon}$, $g^{\varepsilon}=g\ast\gamma_{\varepsilon}$ with $\gamma_{\varepsilon}$ as in \eqref{eq:SP_Hess_est_11}. Then $(g^{k,\varepsilon})_{k\geq 1} \in B_M\cap C^3_{\lip}(\bbRD)$ and 
\be\label{eq:rest_cont_1}
\forall \varepsilon>0,\quad \sup_{x\in\bbRD,k\geq 1, t\in[0,T]}|\varphi^{T,g^k}_t-\varphi^{T,g^{k,\varepsilon}}_t|\leq \varepsilon^{1/2}, \quad \lim_{k\rightarrow+\infty}\sup_{x\in U}|g^{k,\varepsilon}-g^{\varepsilon}|=0, \quad ,
\ee
where $U$ denotes an arbitrarily chosen compact subset of $\bbRD$. Since $g^,g^{\varepsilon}\in C^3_{\lip}(\bbRD)$, the processes $(\bmX^{0,x,T,g^{k,\varepsilon}}_s)_{s\in[0,T]}$ and  $(\bmX^{0,x,T,g^{\varepsilon}}_s)_{s\in[0,T]}$ are well defined for all $x$. Moreover, since $\|g^{k,\varepsilon}\|_{f}\leq M$ uniformly on $k,\varepsilon$ and $x\mapsto b(x,0)\in K$, we obtain from \eqref{eq:drift_lipschitz_1} and some Gronwall's Lemma that for any compact set $U$
\bes
\sup_{k\geq 1,\varepsilon>0}\,\sup_{x\in U}\bbE[|\bmX^{0,x,T,g^{k,\varepsilon}}_s|^2+|\bmX^{0,x,T,g^{\varepsilon}}_s|^2] \leq C
\ees
for some $C\in(0,+\infty)$. We have for any $R,\varepsilon>0$ 
\bes
\begin{split}
    \sup_{x\in U}|\varphi^{T,g^{k}}_0(x)-\varphi^{T,g}_0(x) |&\leq 2\varepsilon^{1/2}+  \sup_{x\in U}|\varphi^{T,g^{k,\varepsilon}}_0(x)-\varphi^{T,g^{\varepsilon}}_0(x)|\\
    &\leq 2\varepsilon^{1/2}+  \sup_{x\in U}\bbE\Big[|g^{k,\varepsilon}-g^{\varepsilon}|(\bmX^{0,x,T,g^{k,\varepsilon}}_T) \Big] +\bbE\Big[|g^{k,\varepsilon}-g^{\varepsilon}|(\bmX^{0,x,T,g^{\varepsilon}}_T) \Big]\\
     & \leq 2\varepsilon^{1/2}+ 2\sup_{|y|\leq R}|g^{k,\varepsilon}-g^{\varepsilon}|(y)+ 2M\sup_{x\in U}\bbE\Big[|\bmX^{0,x,T,g^{k,\varepsilon}}_T| \mathbf{1}_{\{|\bmX^{0,x,T,g^{k,\varepsilon}}_T|\geq R\}} \Big]\\
     &+\sup_{x\in U}2M\bbE\Big[|\bmX^{0,x,T,g^{\varepsilon}}_T| \mathbf{1}_{\{|\bmX^{0,x,T,g^{\varepsilon}}_T|\geq R\}} \Big]\\
     &\leq 2\varepsilon^{1/2}+2\sup_{|y|\leq R}|g^{k,\varepsilon}-g^{\varepsilon}|(y)+2M C\sup_{x\in U}\bbP[|\bmX^{0,x,T,g^{k,\varepsilon}}_T|\geq R]\\
     &+2MC  \sup_{x\in U}\bbP[|\bmX^{0,x,T,g^{\varepsilon}}_T|\geq R])\\
     &+  2\varepsilon^{1/2}+2\sup_{|y|\leq R}|g^{k,\varepsilon}-g^{\varepsilon}|(y)+\frac{4M C^2}{R}
     \end{split}
\ees
But then, using \eqref{eq:rest_cont_1} we find
\bes
\limsup_{k\rightarrow+\infty}\sup_{x\in U}|\varphi^{T,g^{k}}_0(x)-\varphi^{T,g}_0(x)| \leq 2\varepsilon^{1/2}+\frac{4M C^2}{R}
\ees
Since $\varepsilon,R$ can be chosen arbitrarily, it follows that $\varphi^{T,g^{k}}_0(x)$ converges uniformly on compact sets to $\varphi^{T,g}_0(x)$, which gives the desired result.
\end{proof}


\bibliographystyle{imsart-nameyear.bst} 
\bibliography{ArXiv.bbl}       


\end{document}